%% file: main.tex
\documentclass[twoside]{article}
\usepackage{graphicx} 
\input{packages.tex}

\title{Constructions and Rigidity of Nondegenerate \(\mathbb Z_2\) Harmonic Functions with Quadric Branching Sets}
\author{Yuanbo Zhou}
\date{}

\begin{document}

\maketitle
\thispagestyle{plain} 

\input{Abstract}
\tableofcontents

\input{Introduction}
\input{EllipsoidalBranchingSet}
\input{HyperbolicBranchingSet}

\input{ParabolicBranchingSet}

\newpage
\begingroup
    \linespread{1}\selectfont
    \nocite{*} 
    \printbibliography[title={References}]
\endgroup

\end{document}

%% file: packages.tex
\usepackage{anyfontsize}

\usepackage{amsmath}
\usepackage{amssymb}
\usepackage{amsthm}
\usepackage{amsfonts}
\usepackage{mathrsfs}
\usepackage{array}
\usepackage{extarrows}
\usepackage{latexsym}
\usepackage{arydshln}
\usepackage{pifont}
\usepackage{hyperref}
\usepackage{amsmath}
\usepackage{amsthm}
\usepackage{amsfonts}
\usepackage{tikz}
\usepackage[plain]{algorithm}
\usepackage{algpseudocode}
\usepackage{tcolorbox} 
\usetikzlibrary{automata,positioning}

\usepackage[style=alphabetic, backend=biber]{biblatex}

\newcommand{\myspace}[1]{\par\vspace{2em}}

\usepackage{fancyhdr}

\newcommand{\shorttitle}{%
  Constructions and Rigidity of Nondegenerate \(\mathbb Z_2\) Harmonic Functions with Quadric Branching Sets%
}

\fancypagestyle{plain}{%
  \fancyhf{}
  \fancyfoot[C]{\thepage}

}

\theoremstyle{plain}
\newtheorem{theorem}{Theorem}[section]
\newtheorem{lemma}[theorem]{Lemma}

\newtheorem{proposition}[theorem]{Proposition}
\newtheorem{corollary}[theorem]{Corollary}
\newtheorem{remark}[theorem]{Remark}

\theoremstyle{definition}
\newtheorem{definition}[theorem]{Definition}



%% file: Abstract.tex
\begin{abstract}
We study nondegenerate \(\mathbb Z_2\) harmonic functions on Euclidean spaces with branching loci given by ellipsoids and planar conics. We first prove that the parameter map associated with Yan's ellipsoidal family is injective. Together with Yan's surjectivity result, this identifies the ellipsoidal models, up to Euclidean motions and overall sign, with nondegenerate harmonic quadratic polynomials of index \(n-1\) and positive critical value.

In \(\mathbb R^3\), we use modified ellipsoidal coordinates to construct a nondegenerate \(\mathbb Z_2\) harmonic function in a neighborhood of any planar hyperbola. We then show that no global critical \(\mathbb Z_2\) harmonic function with a planar hyperbola as branch locus can have finite Almgren frequency at infinity. Using modified paraboloidal coordinates, we construct a global nondegenerate \(\mathbb Z_2\) harmonic function with any prescribed planar parabola as branch locus and obtain a precise asymptotic expansion and scale-down limit. Finally, we identify the parabolic model as the infinitesimal two-valued graph potential of a family of special Lagrangian three-folds constructed by Joyce.
\end{abstract}

%% file: Introduction.tex
\section{Introduction}

 $\mathbb{Z}_2$ harmonic objects are singular solutions of linear elliptic equations with sign monodromy around a set of codimension two. Depending on the bundle in which the solution takes values, one obtains $\mathbb{Z}_2$ harmonic functions, $1$-forms, or spinors. These objects have emerged as pivotal objects of study within the areas of differential geometry and geometric analysis. The principal geometric settings that motivate their study are the following.

\paragraph{Gauge theory and representation varieties.}
The subject originated in Taubes's work of noncompactness for moduli spaces of $\operatorname{PSL}(2,\mathbb{C})$ connections on closed $3$-manifolds \cite{10.4310/CJM.2013.v1.n2.a2,taubes2020compactnesstheoremssl2cgeneralizations}.  After normalization, a diverging sequence gives rise away from a codimension-two singular set to a harmonic $1$-form with monodromy in $\{\pm1\}$.  Thus $\mathbb{Z}_2$ harmonic $1$-forms occur naturally as ideal boundary points of the moduli space.  Their relation with the Morgan--Shalen compactification, measured foliations, and equivariant harmonic maps to $\mathbb{R}$-trees has since been made explicit in \cite{HeWentworthZhang2024}.  Closely related $\mathbb{Z}_2$ harmonic forms and spinors also arise as limiting configurations for the Kapustin--Witten, Vafa--Witten, and generalized Seiberg--Witten equations \cite{taubes2022sequencesnahmpolesolutions,taubes2017behaviorsequencessolutionsvafawitten,HaydysWalpuski2015}.

A further appearance, particularly relevant to special holonomy, comes from Fueter equations.  Fueter sections are nonlinear analogues of harmonic spinors with hyperk\"ahler targets and occur in gauge theory, Floer theory, and the Donaldson--Segal proposal relating Calabi--Yau monopoles to special Lagrangian submanifolds.  Habibi Esfahani and Yang Li proved that a sequence of Fueter sections of a charge-$2$ monopole bundle over a closed oriented $3$-manifold, whose $L^\infty$ norms diverge, has a renormalized subsequence converging in $W^{1,2}$ to a nonzero $\mathbb{Z}_2$ harmonic $1$-form \cite{EsfahaniLi2024}.  This result gives a direct compactness-theoretic bridge between Fueter geometry and the objects studied here.

\paragraph{Calibrated and special-holonomy geometry.}
Donaldson developed the deformation theory of multivalued harmonic functions partly as a linear model for two nonlinear problems \cite{Deformationsofmultivaluedharmonicfunctions}.  The first concerns branched deformations of special Lagrangian submanifolds.  If $L$ is special Lagrangian, ordinary infinitesimal deformations are represented by harmonic $1$-forms on $L$; allowing a double branched cover replaces them by $\mathbb{Z}_2$ harmonic $1$-forms.  Donaldson proposed that a nondegenerate such form should integrate to a family of immersed special Lagrangians converging with multiplicity two to $L$, and S.He proved this statement in \cite{brancheddeformationsofthespecialLagrangiansubmanifolds}.

The second nonlinear problem arises in $G_2$ geometry.  In Donaldson's conjectural description of the adiabatic limit of coassociative Kovalev--Lefschetz fibrations with $K3$ fibres, the limiting data are encoded by branched maximal sections of an affine bundle with fibre
\[
H^2(K3;\mathbb{R})\cong\mathbb{R}^{3,19}.
\]
Near a component of the discriminant, the monodromy is a reflection in a $(-2)$-class and the corresponding component of a branched maximal section is two-valued.  The maximal-section equation linearizes to the Laplace equation, so its reflected component is modeled by a critical $\mathbb{Z}_2$ harmonic function \cite{Donaldson2017,Deformationsofmultivaluedharmonicfunctions}.  

A related  application concerns the resolution of $G_2$ orbifolds.  Joyce and Karigiannis resolve a $\mathbb{C}^2/\mathbb{Z}_2$ singularity fibered over an associative $3$-fold using a nowhere-vanishing harmonic $1$-form \cite{JoyceKarigiannis2021}.  When that form has regular zeros, Yan proposes replacing it by $\mathbb{Z}_2$ harmonic $1$-forms with shrinking branching sets and using Yang Li's complete Calabi--Yau metric on $\mathbb{C}^3$ as the local model near the branching locus \cite{Li2019C3,yan2026nondegeneratemathbbz2harmonic1formsshrinking}.  This proposed picture supplies another link among nondegenerate $\mathbb{Z}_2$ harmonic forms, calibrated submanifolds, and desingularization in special holonomy.

\paragraph{Spectral and analytic models.}
In dimension two, the theory reduces to the classical geometry of branched double covers and quadratic differentials: the square of the corresponding odd holomorphic $1$-form descends to a quadratic differential on the base.  On the round $2$-sphere, $\mathbb{Z}_2$ eigenfunctions and the topology of their singular sets have been studied by Taubes and Wu and by Chen and He \cite{TaubesWu2024Topological,chen2024existencerigiditycriticalz2}.  In dimensions three and higher, explicit and deformation-theoretic examples have been obtained by Taubes--Wu, Haydys--Mazzeo--Takahashi, Donaldson, He and Parker, and Yan; Sun also treated scalar $\mathbb{Z}_2$ harmonic functions on $\mathbb{R}^2$ \cite{TaubesWu2020,sun2025singular,HaydysMazzeoTakahashi2025,donaldson2025twistorconstructionmultivaluedharmonic,HeParker2024,yan2025constructionnondegeneratemathbbz2harmonicfunctions}.

We now introduce the precise concepts used in this paper. Let $(M, g)$ be a Riemannian manifold of dimension $n$, and let $\Sigma$ initially be an oriented smooth submanifold of codimension $2$. We shall later in Section 3 clarify that the regularity of $\Sigma$ can be relaxed; specifically, to define a $\mathbb{Z}_2$ harmonic function, it suffices for $\Sigma$ to be a closed subset of $M$ with Hausdorff dimension $\dim_H(\Sigma) \le n-2$. However, we always assume our branching sets are at least continuously differentiable  unless otherwise specified.

 Let $\Gamma$ be the group of isometries of real line, satisfying the following exact sequence
    \begin{equation*}
        1\to (\mathbb{R},+)\to \Gamma\xrightarrow{\pi} \{\pm 1\}\to 1.
    \end{equation*}

Let $\chi:\pi_1(M\setminus\Sigma)\to \Gamma$ be a representation such that \(\chi\) sends every sufficiently small loop linking \(\Sigma\) to a reflection. We can construct the associated affine $\mathbb R$-bundle $V^-$ over $M\setminus\Sigma$ in the standard way. Let $\widetilde{\chi}:=\pi\circ \chi$. Similarly $\widetilde{\chi}$ defines a flat $\mathbb{R}$ bundle $\mathcal{I}$, which is the vertical line bundle of $V^-$. The sections of $\mathcal{I}$ can locally be regarded as two-valued functions on  $M$. Using the flat structure on $V^-$ and the Riemannian metric on $M$, the Laplacian operator makes sense as 
    \begin{equation*}
        \Delta_g:\Gamma(V^-)\to \Gamma(\mathcal{I}).
    \end{equation*}

Thus we can give a  concrete definition of $\mathbb{Z}_2$  harmonic functions:
\begin{definition}
     A $\mathbb{Z}_2$  harmonic function on $M$ is a triple $(\Sigma,\chi,f)$, where $f\in\Gamma(M\setminus\Sigma,V^-)$ such that
       $f,\nabla f\in L^2_{\mathrm{loc}}$ and
\[
\Delta_g f=0\qquad\text{on }M\setminus\Sigma.
\]
\end{definition}
We sometimes omit the notation $\Sigma$ and $\chi$ when no ambiguity arises.
\begin{remark}
   In many cases, it suffices to assume that a $\mathbb{Z}_2$  harmonic function is merely a smooth section of $\mathcal{I}$ rather than $V^-$ and we only focus on $\mathcal{I}$ in this paper. We say that $\alpha\in \Gamma(M\setminus\Sigma,T^*M\otimes \mathcal{I})$ is a $\mathbb{Z}_2$  harmonic $1$-form if $|\alpha|\in L^2_{\operatorname{loc}}$  and  $d\alpha=d^*\alpha=0$.
\end{remark}

There is an alternative description of $\mathbb{Z}_2$  harmonic functions and 1-forms in terms of branching covers. Details can be found in \cite{brancheddeformationsofthespecialLagrangiansubmanifolds}; here we only sketch the approach.  $\operatorname{Ker}(\widetilde{\chi})$ is an index-two normal subgroup of $\pi_1(M \setminus \Sigma)$, which naturally induces a double covering $p' : M' \to M\setminus \Sigma$. The non-trivial deck transformation gives rise to an involution $\sigma$ on $M'$. Under our assumptions, $p'$ can be extended to a branched cover $p:\widetilde{M}\to M$ with branching set $\Sigma$, where $\widetilde{M}:=M' \cup \Sigma$. Using the co-orientation, we can equip normal bundle  $N_\Sigma$ with a complex structure. Locally, we take an open neighborhood $U$ of a point $x \in \Sigma$ over which the normal bundle trivializes as $N_\Sigma|_U \cong U \times \mathbb{C}$. Let $\tilde{x} = (\tilde{x}_3, \cdots, \tilde{x}_n)$ denote the coordinates on $U$, and let $\tilde{z}$ be the complex coordinate on $\mathbb{C}$. Then the covering projection $p$ takes the local form
\[
p : U \times (\mathbb{C} \setminus \{0\}) \to U \times (\mathbb{C} \setminus \{0\}), \quad (\tilde{x},\tilde{z}) \mapsto (\tilde{x},\tilde{z}^2).
\]

To sum up, we have a double branched covering $p:\widetilde{M}\to M$ branching over $\Sigma$.  Then given a $\mathbb{Z}_2$  harmonic function $f\in\Gamma(M\setminus\Sigma,\mathcal{I})$, $p^*f$ is a well-defined odd function on $\widetilde{M}$ and is harmonic with respect to the pull-back metric $p^*g$. Here, the  term ``odd'' is understood with respect to the involution $\sigma$ on $\widetilde{M}$.

We next introduce the critical and nondegenerate $\mathbb{Z}_2$  harmonic functions and 1-forms, which play an important role in geometric applications. We need the local expansion formula for $\mathbb{Z}_2$  harmonic functions near the branching set developed by Donaldson \cite{Deformationsofmultivaluedharmonicfunctions}. 

Let $\zeta$ denote the inverse of the normal exponential map, defined on a tubular neighborhood of $\Sigma$ in $M$ and taking values in $N_\Sigma$. Given a section $\sigma \in \Gamma(\Sigma, N_\Sigma^{-1})$ of the dual normal bundle, we obtain a complex-valued function $\langle \sigma, \zeta \rangle$, which we shall denote simply by $\sigma \zeta$. Moreover, using the monodromy around $\Sigma$, we can define $N_\Sigma^{p}$ and its dual $N_\Sigma^{-p}$  for any half-integer $p$. Analogously,  we can define a complex-valued function
\[
\sigma_p \zeta^p, \quad \text{where } \sigma_p \in \Gamma(\Sigma, N_\Sigma^{-p}).
\]
\begin{proposition}[\cite{Deformationsofmultivaluedharmonicfunctions}]
    The $\mathbb{Z}_2$  harmonic function $f$ admits the following asymptotic expansion near the branching set $\Sigma$:\begin{equation}\label{eq:intro-expansion}
\begin{split}
f={}&\operatorname{Re}\left(A(t)\zeta^{1/2}+B(t)\zeta^{3/2}\right)
-\frac12\operatorname{Re}(\overline{\mu}\zeta)
 \operatorname{Re}\left(A(t)\zeta^{1/2}\right)
+O(|\zeta|^{5/2}),
\end{split}
\end{equation}
where
\[
A\in\Gamma(\Sigma,N_\Sigma^{-1/2}),\qquad
B\in\Gamma(\Sigma,N_\Sigma^{-3/2}),
\]
and $\mu\in\Gamma(\Sigma,N_\Sigma)$ is the mean-curvature vector of $\Sigma$, written in the normal complex coordinate \cite{Deformationsofmultivaluedharmonicfunctions}.
\end{proposition}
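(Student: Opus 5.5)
Since $f$ is a $\mathbb{Z}_2$ harmonic function, I would not expand it directly on $M\setminus\Sigma$. Instead I would pull it back to the double branched cover described above and work in adapted coordinates near $\Sigma$. Fix a point of $\Sigma$ and use Fermi coordinates $(\tilde x,\zeta)$. Here $\tilde x$ are coordinates on $\Sigma$ and $\zeta=re^{i\theta}$ is the normal complex coordinate given by the inverse normal exponential map. Expand the metric to first order in $\zeta$:
\[
g = g_\Sigma(\tilde x) + |d\zeta|^2 + O(|\zeta|)\,\text{terms},
\]
where the only first-order terms that matter come from the second fundamental form. Writing $\Delta_g=\Delta_0+L_1+O(|\zeta|)\,\partial^2$, the flat normal Laplacian is $\Delta_0=4\partial_\zeta\partial_{\bar\zeta}$. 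The first-order correction $L_1$ has two parts: tangential derivatives, and the term $-\operatorname{Re}(\bar\mu\,\cdot)$ coming from the variation of the volume form. Concretely, $\sqrt{\det g}=1-\operatorname{Re}(\bar\mu\zeta)+O(|\zeta|^2)$, so $L_1$ contains $-\langle \mu,\nabla_N\rangle$ plus curvature terms that are quadratic in the normal variables.

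Next I would derive the leading terms from separation of variables on the cover. In the variable $w=\zeta^{1/2}$, the harmonic condition becomes an elliptic equation. Odd solutions of $\Delta_0 u=0$ that are $L^2_1$ near $\Sigma$ are spanned by $\operatorname{Re}(a\zeta^{k+1/2})$ and their $\bar\zeta$ conjugates. Among these, only the holomorphic powers $\zeta^{1/2},\zeta^{3/2}$ (after absorbing the $\bar\zeta$ parts into the real part) survive at the orders we need.

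To justify that $f$ has a polyhomogeneous expansion at all, I would use the edge-calculus or Fourier-mode argument along the $\theta$ direction. Decompose $f=\sum_{k\in\mathbb{Z}+1/2}f_k(\tilde x,r)e^{ik\theta}$. Each mode satisfies an ODE in $r$ with error terms; the $L^2_1$ condition kills the $r^{-|k|}$ branch, and iterating elliptic estimates gives $f_k=a_kr^{|k|}+\dots$. The section-valued coefficients $A\in N_\Sigma^{-1/2}$ and $B\in N_\Sigma^{-3/2}$ then arise because the coefficient of $\zeta^{p}$ must transform with the monodromy of $N_\Sigma$.

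The key computational step is the correction at order $|\zeta|^{3/2}$. I would set
\[
f=\operatorname{Re}(A\zeta^{1/2})+u_{3/2}+O(|\zeta|^{5/2})
\]
and solve $\Delta_0u_{3/2}=-L_1\operatorname{Re}(A\zeta^{1/2})$. Tangential derivatives of $A\zeta^{1/2}$ are $O(|\zeta|^{1/2})$ and feed only into order $5/2$, so the right-hand side at order $|\zeta|^{-1/2}$ comes solely from the mean curvature term. Using $\partial_{\bar\zeta}(\zeta^{1/2})=0$, the right-hand side becomes
\[
\operatorname{Re}(\bar\mu\,\partial_{\bar\zeta}\text{-type contraction})\cdot\tfrac12 A\zeta^{-1/2}.
\]
I would verify that
\[
\Delta_0\bigl(-\tfrac12\operatorname{Re}(\bar\mu\zeta)\operatorname{Re}(A\zeta^{1/2})\bigr)
\]
reproduces it. The Laplacian of this product is $2\nabla\operatorname{Re}(\bar\mu\zeta)\cdot\nabla\operatorname{Re}(A\zeta^{1/2})$, since both factors are harmonic, and it matches $\langle\mu,\nabla\operatorname{Re}(A\zeta^{1/2})\rangle$ after comparing constants. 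The homogeneous freedom at order $3/2$ is exactly $\operatorname{Re}(B\zeta^{3/2})$.

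The main obstacle is the error control: showing that the remainder is genuinely $O(|\zeta|^{5/2})$ rather than merely formal. This requires a weighted Schauder or $L^2$ estimate for the model operator on the cover, with the indicial root gap between $3/2$ and $5/2$. I would handle it by working on the cover in coordinates $w$, where the problem is a smooth elliptic equation whose odd solutions vanish appropriately, and then applying standard boundary-regularity estimates.
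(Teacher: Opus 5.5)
The paper gives no proof of this statement; it quotes it from Donaldson, so your sketch has to stand on its own. Its architecture is the standard one: polyhomogeneity near $\Sigma$, then order-by-order matching in Fermi coordinates. You also correctly isolate the mean-curvature term as the only $O(|\zeta|^{-1/2})$ error produced by $\Delta_g$ acting on $f_0:=\operatorname{Re}(A\zeta^{1/2})$.

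\textbf{The matching step has the wrong sign.} With your normalization $\sqrt{\det g}=1-\operatorname{Re}(\bar\mu\zeta)+O(|\zeta|^2)$, one gets $\Delta_g f_0=-\langle\mu,\nabla_N f_0\rangle+O(|\zeta|^{1/2})$, where $\langle\mu,\nabla_N f_0\rangle=\tfrac12\operatorname{Re}(\mu A\zeta^{-1/2})$. So the correction must satisfy $\Delta_0u_{3/2}=+\langle\mu,\nabla_Nf_0\rangle$. Your own product-rule computation gives $\Delta_0\bigl(c\operatorname{Re}(\bar\mu\zeta)f_0\bigr)=2c\langle\mu,\nabla_Nf_0\rangle$, which forces $c=+\tfrac12$, not $-\tfrac12$. ``Comparing constants'' cannot repair an opposite sign.

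As a check, take a circle of radius $R$ in $\mathbb{R}^3$ and write axisymmetric harmonic functions as $\rho^{-1/2}v$. This shows the order-$\tfrac32$ correction is $-\tfrac{s}{2R}f_0$, with $s$ the outward normal coordinate. That equals $+\tfrac12\operatorname{Re}(\bar H\zeta)f_0$ for the standard inward curvature vector $H=\operatorname{tr}\mathrm{II}$. So the coefficient $-\tfrac12$ in the statement holds only when $\mu$ is normalized by $\sqrt{\det g}=1+\operatorname{Re}(\bar\mu\zeta)+O(|\zeta|^2)$, i.e.\ $\mu=-\operatorname{tr}\mathrm{II}$. You must fix that convention and carry the sign through.

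\textbf{The analytic half is missing.} This is the real content of the proposition, and your plan does not supply it.
\begin{itemize}
\item On the double cover with $\zeta=w^2$, the metric is $g_\Sigma+4|w|^2|dw|^2+\dots$, a cone metric of angle $4\pi$ along the branch locus, and $\Delta=\tfrac{1}{4|w|^2}\Delta_w+\Delta_\Sigma+\dots$. Clearing the factor $|w|^{-2}$ destroys ellipticity in the tangential directions at $w=0$. So this is not a smooth elliptic equation in $(\tilde x,w)$, and standard boundary or interior regularity gives nothing there.
\item Likewise, the modes $f_k(\tilde x,r)$ do not satisfy decoupled ODEs in $r$. The $\theta$-dependent coefficients (already $\langle\mu,\nabla_N\rangle$ couples mode $k$ to $k\pm1$) and $\Delta_\Sigma$ couple them into a PDE system.
\item To get from $f,\nabla f\in L^2_{\mathrm{loc}}$ to an expansion at all, you need genuine edge-operator theory. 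One option is Mazzeo's edge calculus for $r^2\Delta_g$, with indicial roots $\pm(k+\tfrac12)$. Another is a dyadic rescaling argument reducing to the flat model branched along $\mathbb{R}^{n-2}$, with weighted estimates there.
\end{itemize}

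\textbf{The sharp remainder needs a non-resonance check.} The indicial gap between $\tfrac32$ and $\tfrac52$ alone does not give a remainder $O(|\zeta|^{5/2})$. A component $|\zeta|^{1/2}e^{\pm5i\theta/2}$ in the order-$|\zeta|^{1/2}$ forcing would produce a $|\zeta|^{5/2}\log|\zeta|$ term. That forcing comes from $\Delta_\Sigma A$, the $O(|\zeta|)$ and $O(|\zeta|^2)$ metric corrections, and the mean-curvature operator applied to the order-$\tfrac32$ terms. You must check that it has Fourier modes only among $\pm\tfrac12,\pm\tfrac32,\pm\tfrac72$. It does, but this check has to be part of the argument.
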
 

\begin{definition}
The function $f$ is \emph{critical} if $A\equiv0$ in \eqref{eq:intro-expansion}.  It is \emph{nondegenerate} if it is critical and $B$ is nowhere vanishing on $\Sigma$.
\end{definition}

General existence is subtle because the branching set is part of the unknown.  A number of compact and noncompact constructions are now known \cite{TaubesWu2020,chen2024existencerigiditycriticalz2,sun2025singular,donaldson2025twistorconstructionmultivaluedharmonic}.  Of particular relevance here, Yan constructed explicit nondegenerate $\mathbb{Z}_2$ harmonic functions on $\mathbb{R}^n$, $n\geq3$, whose branching sets are codimension-two ellipsoids \cite{yan2025constructionnondegeneratemathbbz2harmonicfunctions}. Yan subsequently used these models in a gluing construction for nondegenerate $\mathbb{Z}_2$ harmonic $1$-forms on compact manifolds under $b^1(M)>0$ \cite{yan2026nondegeneratemathbbz2harmonic1formsshrinking}. 

However, relatively little is known about nondegenerate $\mathbb{Z}_2$  harmonic functions on noncompact manifolds, even in the model case of  $\mathbb{R}^n$. Therefore, developing a better understanding of nondegenerate  $\mathbb{Z}_2$  harmonic functions on $\mathbb{R}^n$ is both a natural and important problem.  The purpose of this paper is to develop such examples and to study the rigidity imposed by their geometry at infinity. We further expect that such functions on $\mathbb{R}^n$ will have important applications in geometry.

Our first result completes the parametrization of Yan's ellipsoidal family.  He introduced a map $F_n$ from $K_{n-1}$ to $K_{n-1}$,
   defined explicitly in Section 2, that records the coefficients of
   the quadratic harmonic polynomial at infinity.  Yan proved that $F_n$ is surjective. The injectivity argument was communicated to Yan and is recorded in Remark 4.4 of the revised version of \cite{yan2025constructionnondegeneratemathbbz2harmonicfunctions}. We include a complete proof here and use it to formulate the precise parametrization of the ellipsoidal family.

\begin{theorem}
    $F_n$ is injective.
\end{theorem}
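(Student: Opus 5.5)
The plan is to realize $F_n$, after a change of variables, as the gradient of a strictly convex (or concave) function on a convex domain, and then use the standard fact that such a gradient map is injective. I will start from the explicit formula in Section 2. I expect the components of $F_n$ to be the coefficients $c_1,\dots,c_{n-1}$ of the harmonic quadratic polynomial at infinity; the last coefficient is determined by harmonicity. I expect each $c_i$ to be an ellipsoidal-coordinate integral of the form
\[
c_i(a)=\int_0^\infty \frac{w(s)\,ds}{(a_i^2+s)\prod_{j=1}^{n-1}(a_j^2+s)^{1/2}}
\]
for a positive weight $w$, with $a_1,\dots,a_{n-1}$ the semi-axes of the ellipsoidal branching set. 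This is the classical structure of ellipsoidal potentials.

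First, I will pass to the variables $t_i=a_i^2$, or $u_i=\log a_i^2$ if that makes the domain and the convexity cleaner. In these variables I will look for a potential $\Phi$ with $\partial\Phi/\partial t_i=-2c_i$ (up to a fixed positive normalization), namely
\[
\Phi(t)=\int_0^\infty w(s)\,\prod_{j}(t_j+s)^{-1/2}\,ds ,
\]
suitably renormalized if the integral diverges. Differentiation under the integral sign shows that $\nabla\Phi$ reproduces $F_n$, possibly after composing with a fixed linear isomorphism. Such an isomorphism, and the diffeomorphism $a\mapsto t$, do not affect injectivity.

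Second, I will compute the Hessian of $\Phi$. It equals
\[
\frac14\int_0^\infty w(s)\prod_j(t_j+s)^{-1/2}\bigl(v(s)v(s)^T+\operatorname{diag}(v(s)^2)\bigr)\,ds,
\qquad v(s)_i=(t_i+s)^{-1}.
\]
This is an integral of positive semidefinite matrices whose diagonal part is positive definite, so the Hessian is positive definite and $\Phi$ is strictly convex on the convex set $K_{n-1}$ (or on its preimage in $t$-space). For strictly convex $\Phi$ and $t\neq t'$,
\[
\langle\nabla\Phi(t)-\nabla\Phi(t'),\,t-t'\rangle>0 ,
\]
which gives injectivity of $\nabla\Phi$ and hence of $F_n$.

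The main obstacle is the first step: matching the exact normalization in Yan's definition of $F_n$. This includes the constraint built into $K_{n-1}$ (for instance a normalization of the critical value, or a trace-free condition) and any factor depending on the critical value. Such factors can prevent $F_n$ from being literally a gradient. If so, I would first quotient out the scaling symmetry, using homogeneity of $F_n$ under $a\mapsto\lambda a$, and prove injectivity on a slice. Then I would check that the remaining scalar factor is monotone along rays, so that both the ray and the point on the slice are recovered from the image. If no potential is available, the fallback is to show that the Jacobian determinant never vanishes, via the same integral-of-positive-matrices structure, and that $F_n$ is proper. Together with Yan's surjectivity and connectedness and simple connectivity of $K_{n-1}$, this makes $F_n$ a covering map of degree one, hence injective.
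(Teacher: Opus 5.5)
Your approach is essentially the paper's. With $w(s)=s^{-1/2}$, the substitution $s=u^2$ turns your $\Phi$ into exactly $2\Psi$, where $\Psi(\mathbf v)=\int_0^\infty du/\sqrt{\prod_j(u^2+v_j)}$ is the paper's potential. The positive-definiteness of its Hessian then gives strict monotonicity, and hence injectivity, of $\nabla\Psi$ on the convex orthant. The Hessian integrand is $\tfrac14\prod_j(t_j+s)^{-1/2}\bigl(v(s)v(s)^{\top}+2\operatorname{diag}(v(s)^2)\bigr)$, so your diagonal coefficient should be $2$, not $1$; the conclusion is unaffected.

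Your first step, however, is false as stated. Writing $\mathbf t=(h_1^2,\dots,h_{n-1}^2)$, one has $F_n(\mathbf h)=-2\bigl(\prod_j h_j\bigr)\nabla\Psi(\mathbf t)$, which carries a non-constant scalar factor, so $F_n$ is not $\nabla\Phi$ composed with a fixed linear isomorphism. Your homogeneity fallback is therefore required, and it goes through exactly as in the paper:
\begin{itemize}
\item If $F_n(\mathbf h)=F_n(\mathbf h')$, then $\nabla\Psi(\mathbf t)$ is a positive multiple of $\nabla\Psi(\mathbf t')$.
\item The degree $-n/2$ homogeneity of $\nabla\Psi$ converts this into $\nabla\Psi(\mathbf t)=\nabla\Psi(\alpha\mathbf t')$ for some $\alpha>0$.
\item Injectivity of $\nabla\Psi$ gives $\mathbf h=\sqrt{\alpha}\,\mathbf h'$.
\item The degree $-1$ homogeneity of $F_n$, together with $F_n(\mathbf h')\neq0$, forces $\alpha=1$.
\end{itemize}
The covering-map fallback is unnecessary.
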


Combining this theorem with Yan's surjectivity result, we obtain the following corollary.
\begin{corollary}
  Let \(q:\mathbb R^n\to\mathbb R\) be a harmonic polynomial of degree two whose Hessian is nondegenerate and has index \(n-1\). Assume that \(q\) takes a positive value at its unique critical point. Then, up to an overall sign, there exists a unique nondegenerate \(\mathbb Z_2\) harmonic function with an ellipsoidal branch locus for which a single-valued branch \(f^\sigma\) satisfies
 \[
 f^\sigma-q\longrightarrow0
\qquad\text{as }|x|\to\infty.
 \]
\end{corollary}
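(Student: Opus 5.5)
The plan is to reduce the corollary to the bijectivity of $F_n$ (Yan's surjectivity together with the injectivity theorem just stated), after using Euclidean motions and the sign to put $q$ into a normal form that matches the parameter space $K_{n-1}$.

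\textbf{Normalizing $q$.} Since the Hessian of $q$ is nondegenerate, $q$ has a unique critical point $x_0$. After translating, $x_0=0$ and
\[
q(x)=c+\tfrac12 x^{T}Hx, \qquad c=q(x_0)>0 .
\]
Here $H$ is symmetric and trace-free, because $q$ is harmonic. An orthogonal change of coordinates diagonalizes $H$. Since the index is $n-1$, exactly one eigenvalue is positive and $n-1$ are negative, so
\[
q=c+\tfrac12\Big(\lambda_n x_n^2-\sum_{i<n}\lambda_i x_i^2\Big),\qquad \lambda_i>0,\quad \lambda_n=\sum_{i<n}\lambda_i .
\]
Positivity of $c$ makes the zero set of $q$ meet the distinguished axis $x_n$ nontrivially; this is the sign-normalized configuration in Yan's construction. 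I would check, using Section 2, that $K_{n-1}$ is exactly the set of such data, namely the $n-1$ positive coefficients with $c$ fixed by scaling or carried as a coordinate. I would also check that the coefficients of the quadratic polynomial at infinity of Yan's model, which are what $F_n$ records, are precisely this normalized $q$. If the index were $1$ instead of $n-1$, or if $c<0$, one would pass to $-q$; this is the source of ``up to overall sign''.

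\textbf{Existence.} Surjectivity of $F_n$ gives ellipsoid parameters whose model $f$ has a branch $f^\sigma$ with quadratic part $q$ at infinity. I then need $f^\sigma-q\to0$, not just agreement of the leading part. From Yan's explicit construction in ellipsoidal coordinates, the remainder $f^\sigma-q$ is a harmonic function outside a compact set. It decays, because its expansion at infinity has no constant or linear terms: the linear terms vanish by the reflection symmetries of the ellipsoid, and the constant term is built into the definition of $F_n$. Undoing the Euclidean motion then gives existence for the original $q$.

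\textbf{Uniqueness.} Suppose $f_1$ and $f_2$ are nondegenerate $\mathbb Z_2$ harmonic functions with ellipsoidal branch loci, each with a branch asymptotic to $q$. Each is a Euclidean motion of a Yan model. The main point is to show that the motion is forced.
\begin{enumerate}
\item Matching the critical point of $q$ with the center of the ellipsoid fixes the translation. The center of the model is its symmetry center, which must coincide with the critical point of the asymptotic quadratic.
\item Matching the eigenframe of $H$ with the symmetry axes of the ellipsoid fixes the rotation.
\end{enumerate}
This works when the eigenvalues of $H$ are distinct. When they coincide, the model is itself rotationally symmetric in the corresponding eigenspace, so any residual rotation is harmless.

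Once the motions are fixed, both normalized models have the same image under $F_n$. Injectivity of $F_n$ then gives equal ellipsoid parameters, hence equal functions up to the sign choice of branch.

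\textbf{Main obstacle.} The hardest step will be the bookkeeping between the abstract statement and the parametrization $F_n$. I expect three things to need care:
\begin{enumerate}
\item showing that every ellipsoidal nondegenerate model is, up to motion, one of Yan's;
\item showing that the asymptotic quadratic determines the motion, including the degenerate-eigenvalue case;
\item confirming that the decay $f^\sigma-q\to0$ holds.
\end{enumerate}
For the first, I would rely on the classification implicit in Yan's family. For the third, I would rely on his explicit asymptotics.
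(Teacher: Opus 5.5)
Your route is the paper's: the paper gets the corollary in one line by combining the bijectivity of $F_n$ with Theorem~\ref{Yanconstruction}, after normalizing $q$ by a Euclidean motion. As written, however, your matching step has a genuine gap.

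\textbf{The constant term is not matched.} In your normal form, $q$ is determined by $n$ positive numbers: $c$ and $\lambda_1,\dots,\lambda_{n-1}$. The map $F_n$ records only the $n-1$ numbers $2a_1,\dots,2a_{n-1}$. The constant $a_0(\mathbf h)$ of Yan's model is not part of $F_n$. It is a function of $\mathbf h$, so it is already fixed once you take $\mathbf h=F_n^{-1}(\lambda)$. Your claim that ``the constant term is built into the definition of $F_n$'' is therefore false. Your existence step only gives $f_{\mathbf h}^\sigma-q\to a_0(\mathbf h)-c$, which is generally nonzero.

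Nor can $c$ be ``fixed by scaling'' the ellipsoid. The substitution $u=sy$ gives $a_0(s\mathbf h)=s\,a_0(\mathbf h)$, while $a_i(s\mathbf h)=s^{-1}a_i(\mathbf h)$, so the products $a_0a_i$ are dilation invariant. Concretely, for $n=3$ and $h_1=h_2=h$ one finds $a_0=\pi h/4$ and $a_1=a_2=\pi/(8h)$. By injectivity and the permutation symmetry of $F_3$, these are the only models with $a_1=a_2$. Hence no $q$ with $\lambda_1=\lambda_2=\lambda$ and $c\lambda\neq\pi^2/16$ is reached by the models of Theorem~\ref{Yanconstruction} themselves.

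\textbf{The missing idea: an overall positive multiplier.} For $t>0$, $tf_{\mathbf h}$ is again nondegenerate with branch locus $E_{\mathbf h}$. You must solve $t\,a_0(\mathbf h)=c$ and $2t\,a_i(\mathbf h)=\lambda_i$ jointly. Take $\mathbf h_1:=F_n^{-1}(\lambda)$, $t:=\sqrt{c/a_0(\mathbf h_1)}$ and $\mathbf h:=t\mathbf h_1$. The two homogeneities then give $2t\,a_i(\mathbf h)=\lambda_i$ and $t\,a_0(\mathbf h)=t^2a_0(\mathbf h_1)=c$. The $x_n^2$ coefficient matches by harmonicity. The decay $tf_{\mathbf h}-q=O(|x|^{2-n})$ is read off directly from Theorem~\ref{Yanconstruction}, so you need no symmetry argument for it.

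This is exactly where $c>0$ enters, since every model has $a_0>0$. Your justification via the $x_n$-axis is incorrect: there $q=c+\lambda_nx_n^2/2>0$.

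Uniqueness must be redone the same way. Suppose $tf_{\mathbf h}$ and $t'f_{\mathbf h'}$, after motions, are both asymptotic to $q$. The quadratic parts only give $F_n(\mathbf h)=(t'/t)F_n(\mathbf h')=F_n((t/t')\mathbf h')$, so injectivity yields $\mathbf h=(t/t')\mathbf h'$. It is the constant terms, $t\,a_0(\mathbf h)=(t^2/t')\,a_0(\mathbf h')=t'a_0(\mathbf h')$, that force $t=t'$. Your step ``both normalized models have the same image under $F_n$'' silently assumes $t=t'$. The paper's one-line derivation also leaves this rescaling implicit, but it is needed for the corollary as stated.

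\textbf{Two smaller points.} First, the ``overall sign'' does not come from passing to $-q$, since the index and the sign of $c$ are fixed by hypothesis. It reflects that $f$ and $-f$ define the same two-valued function, so each has a branch asymptotic to $q$. Second, Yan's family is a construction, not a classification, so your obstacle~1 cannot be settled by appeal to it. As derived in the paper (cf.\ the abstract), the corollary concerns Euclidean images of positive multiples of the models in Theorem~\ref{Yanconstruction}, and uniqueness should be understood within that class.
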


Motivated by this result, we ask whether ellipsoids are the only compact branching sets in $\mathbb{R}^n$ that admit nondegenerate $\mathbb{Z}_2$ harmonic functions asymptotic to harmonic quadratic polynomials at infinity. This was recently resolved by Yang Li, Parsa Mashayekhi and Yichen Zhang.

Using ellipsoidal coordinates, in a manner similar to that of \cite{yan2025constructionnondegeneratemathbbz2harmonicfunctions}, we construct a local nondegenerate $\mathbb{Z}_2$ harmonic function on $\mathbb{R}^3$ whose branching set is a planar hyperbola in Section 3.1:
\begin{theorem}
   For every planar hyperbola $\mathcal H\subset\mathbb{R}^3$, there exists a nondegenerate $\mathbb{Z}_2$ harmonic function defined in a neighborhood of $\mathcal H$ whose branching set is $\mathcal H$.
\end{theorem}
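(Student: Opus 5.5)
The plan is to adapt Yan's ellipsoidal-coordinate construction, using the \emph{other} focal conic of a confocal family. After a Euclidean motion and scaling, write the hyperbola as
\[
\mathcal H=\Bigl\{y=0,\ \frac{x^2}{a-b}-\frac{z^2}{b-c}=1\Bigr\},\qquad a>b>c .
\]
Every planar hyperbola arises this way: choose $a-b$ and $b-c$ equal to the squared semi-axes. Consider the confocal family
\[
\frac{x^2}{a-\lambda}+\frac{y^2}{b-\lambda}+\frac{z^2}{c-\lambda}=1,
\]
with roots $\lambda_1<c<\lambda_2<b<\lambda_3<a$. This family has two focal conics. One is the ellipse in $\{z=0\}$, which is the locus used by Yan. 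The other is $\mathcal H$, which is exactly the degeneration locus $\{\lambda_2=\lambda_3=b\}$. The coordinates $x^2,y^2,z^2$ are rational in $(\lambda_1,\lambda_2,\lambda_3)$; for example
\[
y^2=\frac{(b-\lambda_1)(b-\lambda_2)(\lambda_3-b)}{(a-b)(b-c)}.
\]
So near $\mathcal H$ the natural local parameters are $s=\sqrt{b-\lambda_2}$ and $t=\sqrt{\lambda_3-b}$. A point of the normal plane to $\mathcal H$ corresponds to $(s,t)$ up to the signs of $s$ and $t$. I will check that $w=t+is$ is, to leading order and up to a smooth nonvanishing factor, a square root of the normal complex coordinate $\zeta$, i.e.\ $\zeta\sim w^2$. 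Consequently a function odd under the deck involution $w\mapsto -w$ is precisely a two-valued function branched along $\mathcal H$.

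\textbf{Step 1: separated solutions.} The Laplacian separates in ellipsoidal coordinates. Products $E_1(\lambda_1)E_2(\lambda_2)E_3(\lambda_3)$ are harmonic when each $E_i$ solves the same Lamé equation
\[
4\sqrt{P(\lambda)}\,\frac{d}{d\lambda}\Bigl(\sqrt{P(\lambda)}\,E'\Bigr)=(\alpha\lambda+\beta)E,
\qquad P(\lambda)=(a-\lambda)(b-\lambda)(c-\lambda),
\]
with common separation constants $\alpha,\beta$ (up to the sign convention for $|P|$ on each interval). At the regular singular point $\lambda=b$ the local exponents are $0$ and $\tfrac12$. Hence near $b$ there are local solutions
\[
u(\lambda)=1+O(\lambda-b),\qquad v(\lambda)=|\lambda-b|^{1/2}\bigl(1+O(\lambda-b)\bigr).
\]
Both are analytic, in $\lambda-b$ and in $\sqrt{|\lambda-b|}$ respectively. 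Since I only need a function in a neighborhood of $\mathcal H$, all three factors only need to be defined locally: $E_1$ near the range of $\lambda_1$ along $\mathcal H$, and $E_2,E_3$ near $b$. Local existence for these ODEs is standard.

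\textbf{Step 2: assembling a two-valued solution with $A\equiv 0$.} In the variables $(s,t)$, the four products $u(\lambda_2)u(\lambda_3)$, $v(\lambda_2)u(\lambda_3)$, $u(\lambda_2)v(\lambda_3)$, $v(\lambda_2)v(\lambda_3)$ behave to leading order like $1$, $s$, $t$, $st$. Both $s$ and $t$ are odd under $w\mapsto-w$, so the two-valued harmonic functions are
\[
f=E_1(\lambda_1)\bigl[p\,v(\lambda_2)u(\lambda_3)+q\,u(\lambda_2)v(\lambda_3)\bigr],
\]
together with sums of such products over several $(\alpha,\beta)$. The leading term $pE_1\,s+qE_1\,t$ is a combination of $\operatorname{Re}w$ and $\operatorname{Im}w$, that is, the $\operatorname{Re}(A\zeta^{1/2})$ term. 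Criticality therefore forces the linear-in-$(s,t)$ part to vanish identically along $\mathcal H$.

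The key observation is that the cubic harmonic $\operatorname{Re}(\bar B w^3)$ contains $t^3-3ts^2$ and $3t^2s-s^3$. These pieces come from the next-order Frobenius coefficients. Writing $v(\lambda)=|\lambda-b|^{1/2}(1+c_1(\lambda-b)+\dots)$ and $u(\lambda)=1+d_1(\lambda-b)+\cdots$ gives
\[
v(\lambda_2)u(\lambda_3)-u(\lambda_2)v(\lambda_3)\approx (s-t)\Bigl(1+O(s^2+t^2)\Bigr).
\]
This combination is not critical by itself, so I will instead use a combination that is antisymmetric under $s\leftrightarrow t$ (that is, $\lambda_2\leftrightarrow\lambda_3$ reflected through $b$) with the linear terms cancelling. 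Concretely, I take the formal solution $f=\sum_k E_1^{(k)}(\lambda_1)\,G_k(\lambda_2,\lambda_3)$ and impose that the $O(|w|)$ coefficient vanishes for every $\lambda_1$. In Yan's ellipsoidal case this is arranged by choosing the separation constants so that $E_1$ is a Lamé function whose linear term is cancelled by the metric factor. So I will first try the analogue: take $\alpha,\beta$ corresponding to a degree-two (quadratic) harmonic, as in Yan's construction where the model at infinity is quadratic, with $E_1$ the Lamé polynomial factor, $E_3=v$, and $E_2$ chosen as the explicit combination $u-\kappa v$. The constant $\kappa$ is then determined by killing the $w$-term.

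\textbf{Step 3: nondegeneracy, and the main obstacle.} Once $A\equiv0$, I will expand to third order in $w$ and compute $B$ along $\mathcal H$ as an explicit function of $\lambda_1$. This uses Donaldson's expansion \eqref{eq:intro-expansion} together with the mean curvature of the hyperbola in the correction term. The regularity conditions $f,\nabla f\in L^2_{\mathrm{loc}}$ are automatic, since $f=O(|w|^3)$ near $\mathcal H$ and $f$ is smooth elsewhere on the double cover.

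The main obstacle is Step 2/3: showing that one can kill the $\zeta^{1/2}$ coefficient \emph{uniformly in $\lambda_1$} while keeping $B(\lambda_1)\neq0$ for every $\lambda_1$. This amounts to a Wronskian-type nonvanishing statement for the Lamé solutions on the interval swept by $\lambda_1$ along $\mathcal H$. I would attack it by computing the first two Frobenius coefficients at $\lambda=b$ explicitly. That reduces $B$ to a product of $E_1(\lambda_1)$ (or $E_1'$) with a nonzero constant. I would then choose $E_1$ to be a solution of its ODE with no zeros on the relevant (possibly shrunken) interval, which is possible because we only need a neighborhood of each compact piece of $\mathcal H$. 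Since $\mathcal H$ is noncompact, I would fix the separation constants so that $E_1$ is positive on all of $(-\infty,c)$, for instance by taking the principal solution of a nonoscillatory equation there. This yields a nondegenerate $f$ on a neighborhood of the whole of $\mathcal H$.
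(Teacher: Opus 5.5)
Your geometric setup is right: it is the paper's modified ellipsoidal coordinates, with $\mathcal H$ the focal hyperbola $\{\lambda_2=\lambda_3=b\}$, $\zeta\sim w^2$, and a Laplacian that separates. The gap is in Step 2. You give no working mechanism for making $A\equiv0$, and the concrete recipe you propose cannot work.

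For one pair $(\alpha,\beta)$, the odd part of $E_2(\lambda_2)E_3(\lambda_3)$ is $p\,v(\lambda_2)u(\lambda_3)+q\,u(\lambda_2)v(\lambda_3)$. Its $O(|w|)$ term is $E_1(\lambda_1)(ps+qt)$. Since $s$ and $t$ are independent, this vanishes for all $\lambda_1$ only if $p=q=0$.
\begin{itemize}
\item Taking $E_3=v$, $E_2=u-\kappa v$ does not help. The added product $v(\lambda_2)v(\lambda_3)\sim st$ is \emph{even} under $w\mapsto-w$, so it destroys the sign monodromy. Being quadratic, it also does not touch the linear term, so no $\kappa$ cancels anything.
\item Antisymmetrizing in $s\leftrightarrow t$ likewise cannot remove terms linear in $s$ and $t$.
\item For the same reason, your Step 3 reduction of $B$ to ``$E_1$ (or $E_1'$) times a constant'' presupposes a single separated product. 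That is incompatible with $A\equiv0$.
\end{itemize}

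The missing idea is that the $\zeta^{1/2}$-coefficient is $\sum_k c_k\gamma_k E_1^{(k)}(\lambda_1)$. So you must superpose separated solutions with \emph{different} separation constants whose factors along $\mathcal H$ satisfy a nontrivial linear relation. The paper does this as follows.
\begin{itemize}
\item It takes the constant harmonic and the two quadratic ellipsoidal harmonics $P_i=(\mu_1^2+p_i)(\mu_2^2-p_i)(\mu_3^2-p_i)$, where $p_{1,2}$ are the roots of $2S(x)+2xS'(x)=0$.
\item In each, it replaces the factor in the normal variable $\mu_2$ by the odd second solution obtained by reduction of order. This gives $f_0$ and $f_{2,i}$.
\item Their tangential factors are $1,\ \mu_3^2-p_1,\ \mu_3^2-p_2$. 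In your notation these are $1,\lambda_1-\theta_1,\lambda_1-\theta_2$ with $\frac{1}{a-\theta}+\frac{1}{b-\theta}+\frac{1}{c-\theta}=0$. Three functions lie in a two-dimensional space, so they are linearly dependent.
\item Hence the choice $b_2=-b_1$, $a=b_1(p_2-p_1)$ (paper's notation) kills the linear term for every $\mu_3$.
\item The cubic term is then $\frac{b_1(p_2-p_1)\mu_3^2}{3h_1h_3p_1p_2}(\mu_2^3-3\mu_1^2\mu_2)$. This never vanishes, because $\mu_3^2\geq h_3^2>0$ and $p_1\neq p_2$.
\end{itemize}

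Polynomial tangential factors also settle a point your plan overlooks. The coordinate $\lambda_1$ reaches $c$ at the two vertices of $\mathcal H$. There $c-\lambda_1\propto z^2$, and the Lamé equation has exponents $0$ and $\tfrac12$. A generic solution $E_1$ on $(-\infty,c)$, such as the principal solution you propose, contains a $\sqrt{c-\lambda_1}\sim|z|$ component. Your function would then have a kink across $\{z=0\}$ and would fail to be harmonic near the vertices.
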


Since our construction produces only a local solution, we conjecture that no global nondegenerate \(\mathbb Z_2\) harmonic function on \(\mathbb R^3\) can have a planar hyperbola as its branching set. Using a scale-down argument, together with the existence and rigidity theory for critical $\mathbb{Z}_2$ eigenfunctions developed by Chen and He \cite{chen2024existencerigiditycriticalz2}, we prove the following obstruction.

\begin{theorem}
There is no global critical $\mathbb{Z}_2$ harmonic function $f$ on $\mathbb{R}^3$ whose branching set is a planar hyperbola and whose Almgren frequency has a finite limit at infinity:
\[
\lim_{r\to\infty}N(f,r)<\infty.
\]
\end{theorem}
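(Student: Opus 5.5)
We argue by contradiction using a blow-down. Suppose $f$ is a global critical $\mathbb Z_2$ harmonic function on $\mathbb R^3$ with branching set a planar hyperbola $\mathcal H$, and suppose $N(f,r)\to d<\infty$ as $r\to\infty$.

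\textbf{Step 1: frequency bounds and compactness.} Almgren monotonicity holds for $\mathbb Z_2$ harmonic functions: pass to the double branched cover, where $p^*f$ is an odd harmonic function, or use the usual Rellich-type identities away from $\Sigma$, which has Hausdorff dimension $\le n-2$. So $N(f,r)$ is nondecreasing and bounded by $d$. Set
\[
f_r(x)=\frac{f(rx)}{\bigl(r^{-2}\int_{\partial B_r}|f|^2\bigr)^{1/2}} .
\]
The bound $N\le d$ gives doubling estimates, and hence uniform $W^{1,2}$ and Hölder bounds on compact sets. A subsequence therefore converges to a limit $f_\infty$. This limit is a nonzero $\mathbb Z_2$ harmonic function whose branching set is contained in the Hausdorff limit of $r^{-1}\mathcal H$, and whose frequency is constant, equal to $d$. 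So $f_\infty$ is homogeneous of degree $d$.

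\textbf{Step 2: identify the tangent cone at infinity.} The rescalings $r^{-1}\mathcal H$ converge to the pair of asymptotic lines $\ell_1\cup\ell_2$, which meet at the center of the hyperbola at an angle $\theta\in(0,\pi)$. Near a point of $\mathcal H$ far out, $\mathcal H$ is nearly straight. We need to show that the branching of $f_\infty$ is exactly $\ell_1\cup\ell_2$, with reflection monodromy around each line. Monodromy persists under convergence, since a small loop linking $r^{-1}\mathcal H$ converges to a loop linking $\ell_i$. The limit cannot become single-valued there: if it did, $f_\infty$ would vanish to order $1/2$ along a set where it has no branching, which is ruled out by a lower bound on $|f_\infty|$ on linking circles coming from the strong convergence. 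Restricting the homogeneous $f_\infty$ to $S^2$ then gives a $\mathbb Z_2$ eigenfunction on the round sphere, with eigenvalue $d(d+1)$, branching at the four points $\pm p_1,\pm p_2$ where $\ell_1,\ell_2$ cross the sphere. Criticality, meaning $A\equiv0$, should pass to the limit. For this I would use Donaldson's expansion \eqref{eq:intro-expansion}, rescaled: the coefficient $A$ for $f_r$ along $r^{-1}\mathcal H$ stays zero, and the convergence is in $C^{1,\alpha}$ on the double cover away from the crossing point. This makes the spherical eigenfunction critical at all four points.

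\textbf{Step 3: apply Chen--He rigidity (the main obstacle).} Chen and He classify critical $\mathbb Z_2$ eigenfunctions on $S^2$ with four branch points. The four points here are antipodally symmetric, $\{\pm p_1,\pm p_2\}$, with an arbitrary angle $\theta$. I expect their rigidity and existence theory to imply that such configurations admit critical eigenfunctions only for special configurations, or to force the eigenfunction to be invariant in a way incompatible with $\theta\ne\pi/2$. The plan is to use their description in terms of the quadratic differential $(\partial f)^2$ on $\mathbb{CP}^1$. Its zeros and poles are constrained: criticality forces simple zeros rather than poles at the branch points. A degree count, namely $\deg = -4$ for a quadratic differential on $\mathbb{CP}^1$, combined with the antipodal symmetry of the cone and the Chen--He rigidity, should produce a contradiction.

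If the rigidity only excludes some values of $\theta$, I would try a different route. The Almgren frequency identities applied near the origin, where the two lines cross, should force $f_\infty$ to vanish there to a very high order, which is incompatible with homogeneity of degree $d$. This is the step I expect to be the real difficulty. The subtle points are ensuring the limiting branch locus is exactly the two lines with nondegenerate monodromy, and matching the resulting spherical configuration with a case Chen--He show is impossible.
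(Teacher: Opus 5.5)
Your overall architecture matches the paper's. It uses a blow-down controlled by the finite frequency, identifies the limit as a homogeneous critical $\mathbb{Z}_2$ harmonic function branched along the two asymptotic lines, restricts to a sphere to get a critical eigensection branched at four points, and derives a contradiction from Chen--He. (The paper works directly on a sphere $\partial B_{r_0}$ on which the radial defects $r\partial_r f-Nf$ and $r^2\partial_r^2 f-N(N-1)f$ tend to zero. It pulls the twisted line bundles back to a fixed one by diffeomorphisms, and gets criticality of the limit from a uniform $W^{2,2}$ bound, since a nonzero coefficient $A$ would give $|\nabla^2 u_\infty|\sim\rho^{-3/2}$, which is not square-integrable.)

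The genuine gap is Step~3. That step produces the contradiction, and you leave it as an expectation. The input you need is the specific rigidity statement of Theorem~\ref{chenhe}(3) \cite{chen2024existencerigiditycriticalz2}: any configuration in $\mathcal C_4$ containing a pair of antipodal points is non-critical. The rescaled hyperbolas converge to two lines through the center, so the limiting configuration is $\{p_1,-p_1,p_2,-p_2\}$. This contains antipodal pairs for every opening angle $\theta$, so the argument closes at once with no case analysis in $\theta$. Something this specific is genuinely required. Chen and He also show that critical four-point configurations exist (the Taubes--Wu tetrahedral one is an example \cite{TaubesWu2020}). So a genericity statement, or a rigidity result excluding only some values of $\theta$, would not suffice.

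The substitutes you propose do not work.
\begin{itemize}
\item \emph{Degree count.} The count for $(\partial g)^2$ on $\mathbb{CP}^1$ needs $(\partial g)^2$ to be a meromorphic quadratic differential. That holds only when $g$ is harmonic on the sphere. For an eigensection with eigenvalue $d(d+1)>0$ one has $\bar\partial\partial g\neq0$, so this route disposes of at most the case $d=0$.
\item \emph{Vanishing order at the crossing point.} This cannot produce a contradiction. A function homogeneous of degree $d$ has frequency identically $d$ about the origin, so nothing forces higher-order vanishing there.
\item \emph{Criticality of the limit (secondary).} Passing criticality to the limit via ``$C^{1,\alpha}$ convergence on the double cover'' needs estimates uniform in the moving branch curves $r^{-1}\mathcal H$. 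The pulled-back metric on the branched cover degenerates along the branch locus, so ordinary Schauder theory does not supply them. The paper instead uses an $L^2$ Hessian estimate for critical $\mathbb{Z}_2$ functions, fed by the vanishing radial defects, to get the uniform $W^{2,2}$ bound.
\end{itemize}
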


Motivated by Yan’s construction, we use paraboloidal coordinates to construct a global nondegenerate \(\mathbb Z_2\) harmonic function whose branching set is a planar parabola.
\begin{theorem}
   Given any planar parabola \(\Gamma_b:=\{(x_1,x_2,x_3)\mid x_3=0,\ 2bx_1=x_2^2\}\subset\mathbb R^3\), there exists a global nondegenerate \(\mathbb Z_2\) harmonic function $f_b$ on \(\mathbb R^3\) whose branching set is \(\Gamma_b\).
\end{theorem}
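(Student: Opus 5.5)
The plan is to imitate Yan's ellipsoidal construction, replacing ellipsoidal coordinates by confocal paraboloidal coordinates. The parabola \(\Gamma_b\) is the degenerate member of a confocal family of paraboloids.

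\textbf{Coordinates.} Introduce paraboloidal coordinates \((\lambda,\mu,\nu)\) on \(\mathbb R^3\) as the roots of
\[
\frac{x_2^2}{s-b}+\frac{x_3^2}{s}=2x_1-s .
\]
Choose the normalization, after translation and scaling, so that the focal parabola of the family, where two coordinate surfaces degenerate, is exactly \(\Gamma_b\). In the standard version the roots separate as \(\lambda\le 0\le \mu\le b\le \nu\). The focal parabola \(\{x_3=0,\ 2bx_1=x_2^2\}\), up to the shift, is the set where the two roots \(\mu\) and \(\nu\) collide at the value \(b\). The "modified" coordinates replace the colliding pair by a coordinate \(w\) with \(\mu=b-w^2\), \(\nu=b+w'^2\), or better by a complex variable whose square is the normal coordinate to \(\Gamma_b\). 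This gives the double branched cover \(\widetilde{\mathbb R^3}\to\mathbb R^3\) explicitly: allowing the modified coordinate to change sign realizes the sheet interchange \(\sigma\).

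\textbf{Separation of variables.} The Laplacian in paraboloidal coordinates is Stäckel-separable, with metric factors \((\lambda-\mu)(\lambda-\nu)/(4\lambda(\lambda-b))\) and its cyclic versions. A harmonic function therefore has the separated form \(f=L(\lambda)M(\mu)N(\nu)\), where each factor solves the same second-order ODE with two separation constants. I would look for the simplest ansatz modeled on Yan's: one factor is a square-root function odd under the modified coordinate, and the other two are elementary or given by an integral. Concretely, I would try
\[
f=\sqrt{\,\nu-b\,}\cdot P(\lambda,\mu)+\dots
\]
or use the "exterior" harmonic of the form \(\sqrt{(\mu-b)(\nu-b)}\,G(\lambda)\). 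Here \(G\) is the decaying solution of the separated ODE, \(G(\lambda)=\int \frac{ds}{\sqrt{s(s-b)}\,(\dots)}\)-type. It is chosen so that \(f\) is smooth across the noncritical coordinate planes \(\lambda=0\) and \(\mu=0\), where the coordinates also fold. Parity in the folding coordinates forces this choice.

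\textbf{Verification steps.} The proof then proceeds as follows.
\begin{enumerate}
\item Check that \(f\) is smooth and single-valued away from \(\Gamma_b\), and flips sign around it.
\item Check that \(f\) and \(\nabla f\) are \(L^2_{\mathrm{loc}}\).
\item Expand near \(\Gamma_b\). In the normal complex coordinate \(\zeta\), \(\mu-b\) and \(\nu-b\) are, to leading order, the real and imaginary parts of \(\zeta\) times a positive function. So a product \(\sqrt{(\nu-b)(b-\mu)}\) times a factor vanishing to first order yields \(\operatorname{Re}(B\zeta^{3/2})\) with no \(\zeta^{1/2}\) term. That proves criticality.
\item Show \(B\) is nowhere zero. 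This reduces to \(G\), or the analogous factor evaluated along \(\Gamma_b\), having no zeros, which follows from monotonicity of the chosen ODE solution.
\end{enumerate}

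\textbf{Main obstacle.} I expect the hardest part to be the choice of separation constants and ODE solutions that simultaneously achieve three things: smoothness across the other degenerate loci of the coordinate system (the planes \(x_3=0\) and \(x_2=0\) off the parabola, and the region where \(\lambda\) folds); vanishing of the \(\zeta^{1/2}\) coefficient; and nonvanishing of \(B\) along the whole noncompact curve, including uniformly as \(x_1\to\infty\). I would first try the lowest-order separated solutions, i.e. the analogues of Yan's ellipsoidal harmonics of degree \(3/2\). I would verify the parity conditions directly from the explicit formulas. If the first guess fails, I would add a term harmonic in the ordinary sense, such as a linear function, to cancel the \(A\) coefficient.
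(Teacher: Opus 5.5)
Your overall strategy is the paper's: modified paraboloidal coordinates in which the deck transformation flips the sign of two square-root coordinates, then separation of variables and an expansion at $\Gamma_b$. But the proposal never exhibits a function, and the candidates you name cannot work.

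A minor point first. As written, $\frac{x_2^2}{s-b}+\frac{x_3^2}{s}=2x_1-s$ has nonreal roots, e.g.\ at $x_1=x_3=0$, $|x_2|>b/2$. The confocal family you want is $\frac{x_2^2}{b-s}+\frac{x_3^2}{-s}=2x_1-s$, for which $\Gamma_b=\{\lambda=\mu=0\}$.

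More importantly, step 3 misidentifies the local model. In the paper's notation the colliding roots are $-\mu_2^2$ and $\mu_3^2$, and
\[
z^{1/2}=\Bigl(\frac{b+\mu_1^2}{4b}\Bigr)^{1/4}(\mu_2+\sqrt{-1}\mu_3).
\]
So it is the square roots $\mu_2,\mu_3$ that are, up to a positive factor, $\operatorname{Re}\zeta^{1/2}$ and $\operatorname{Im}\zeta^{1/2}$. The roots' distances from the collision value, $\mu_2^2$ and $\mu_3^2$, are proportional to $|\zeta|\pm\operatorname{Re}\zeta$ to leading order, not to $\operatorname{Re}\zeta$ and $\operatorname{Im}\zeta$. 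Hence your $\sqrt{(b-\mu)(\nu-b)}$, the product of the two square-root coordinates, equals $\mu_2\mu_3=x_3\sqrt{b/(b+\mu_1^2)}$. This is invariant under $(\mu_2,\mu_3)\mapsto(-\mu_2,-\mu_3)$, so the ansatz $\sqrt{(b-\mu)(\nu-b)}\,G(\lambda)$ is an ordinary single-valued function, not a $\mathbb Z_2$ one.

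The decisive gap is criticality: no single separated product can be critical. The separated ODEs in $\mu_2$ and $\mu_3$ (e.g.\ $(b+\mu^2)F''+\mu F'=(Q_0+Q_1\mu^2)F$) are regular at $\mu=0$ because $b>0$. So a nonzero odd solution has $F'(0)\neq0$, and a nonzero even one has $F(0)\neq0$. A product $F_1(\mu_1)F_2(\mu_2)F_3(\mu_3)$ that is odd under the deck map must have one of $F_2,F_3$ even and the other odd. Its $\zeta^{1/2}$-coefficient is therefore a nonzero multiple of $F_1(\mu_1)F_2(0)F_3'(0)$ or of $F_1(\mu_1)F_2'(0)F_3(0)$, which vanishes identically along $\Gamma_b$ only if the product is zero.

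So step 3 and the "lowest-order separated solutions" fail, and the fallback does not help. An ordinary harmonic function, such as a linear one, is even under the deck map. Adding it destroys the sign-flip monodromy and cannot cancel the odd $\zeta^{1/2}$ term.

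The missing idea is to combine several odd pieces whose $\zeta^{1/2}$-coefficients cancel identically in $\mu_1$. The paper takes
\[
f_b=f_2(\mu_3)+\Bigl(x_1+\frac b4\Bigr)f_1(\mu_3)+\frac12P_bf_3(\mu_3).
\]
Here $f_1=\operatorname{arsinh}(\mu_3/\sqrt b)$ is harmonic. The term $f_2=\int_0^{\mu_3}v^2(b+v^2)^{-1/2}\,dv$ satisfies $\Delta f_2=-2\langle\nabla x_1,\nabla f_1\rangle=-\Delta(x_1f_1)$. The term $P_bf_3$ comes from the harmonic quadratic $P_b=-(\mu_1^2+\frac b2)(\frac b2-\mu_2^2)(\frac b2+\mu_3^2)$ by replacing its factor $\frac b2+\mu_3^2$ with the reduction-of-order partner $(\frac b2+\mu_3^2)f_3$. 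The degree-one terms in $(\mu_2,\mu_3)$ of the last two summands are $\pm(\mu_1^2+\frac b2)\mu_3/(2\sqrt b)$, which cancel. This leaves $B=-\sqrt{-1}(b+\mu_1^2)/(3b^{3/2})\neq0$.

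Your step 1 also has no mechanism for smoothness across the other focal parabola $|\mu_2|=\sqrt b$, where the chart degenerates. The paper obtains this from the identity $f_b=-F_J/b$ in Joyce's global parameters $(p,q,\tau)$.
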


In addition, we investigate the asymptotic growth of this function at infinity.

\begin{proposition}
    Let $\ell_+
    :=
    \{(x_1,0,0):x_1\geq 0\}$,
and fix
$K\Subset\mathbb{R}^3\setminus\ell_+.$ For sufficiently large $R>0$, we have the following asymptotic formula for $f_b$:
    \begin{equation*}
\begin{aligned}
f_b(R\mathbf{x})
=
\sigma\Bigg[
R^2\frac{x_3^2-x_2^2}{2b}
+
\frac{Rx_1}{2}\log R+
R\left\{
    \frac{|\mathbf{x}|}{2}
    +
    x_1
    \log\left(
        2\sqrt{
            \frac{|\mathbf{x}|-x_1}{b}
        }
    \right)
\right\}-
\frac b8\log R
+
O_{K,b}(1)
\Bigg]
\end{aligned}
\end{equation*}
for every $\mathbf{x}\in K$, where $O_{K,b}(1)$ is the error term uniformly bounded on $K$ and $\sigma=\pm1$.
\end{proposition}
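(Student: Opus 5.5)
The plan is to work directly from the explicit formula for \(f_b\) obtained in the construction of the preceding theorem. There \(f_b\) is written in modified paraboloidal coordinates \((\lambda,\mu,\nu)\) adapted to the degenerate confocal family of paraboloids whose focal conic is \(\Gamma_b\). In these coordinates \(f_b\) is a separated expression built from square-root factors and one-variable integrals in the coordinates. The proposition then reduces to two tasks: asymptotics of the coordinate map under the dilation \(\mathbf x\mapsto R\mathbf x\), and asymptotics of the one-variable functions entering \(f_b\). The branch sign \(\sigma=\pm1\) only records the choice of sheet, so we fix one sheet on \(\mathbb R^3\setminus(\Gamma_b\cup\ell_+)\) throughout.

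\textbf{Step 1: coordinates of \(R\mathbf x\).} Solve the defining cubic (or quadratic, after the modification) for \((\lambda,\mu,\nu)\) at the point \(R\mathbf x\), for \(\mathbf x\in K\). The expectation is that one coordinate (the ``unbounded'' paraboloid parameter) behaves like
\[
R(|\mathbf x|-x_1)+O(1),
\]
while the remaining coordinates stay bounded, or grow at a rate controlled by \(x_2,x_3\). The hypothesis \(K\Subset\mathbb R^3\setminus\ell_+\) is exactly what should guarantee that \(|\mathbf x|-x_1\) is bounded below on \(K\). This is the only place where the coordinate map degenerates at infinity, namely the positive \(x_1\)-axis inside all the paraboloids. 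Hence the implicit function theorem applied to the rescaled equation (with \(1/R\) as a small parameter) gives expansions with \(O_K(R^{-1})\) remainders, uniformly in \(\mathbf x\in K\).

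\textbf{Step 2: expansion of the separated factors.} Insert these expansions into the formula for \(f_b\) and expand each factor in powers of \(R\) and \(\log R\).
\begin{itemize}
\item The product of the square-root factors should produce the leading term \(R^2(x_3^2-x_2^2)/(2b)\), the harmonic quadratic polynomial at infinity. Comparing it with the \(\zeta^{3/2}\) normalization of the construction serves as a consistency check.
\item The one-variable integral in the unbounded coordinate is where the logarithms arise. Its integrand behaves like \(s^{-1/2}\) or \(s^{-1}\) at infinity. Splitting it as an explicit elementary primitive plus an absolutely convergent remainder gives a term \(\tfrac12\log(\lambda/b)+2\log 2+\text{const}+O(\lambda^{-1})\).
\item Multiplying by the factor asymptotic to \(Rx_1\) then yields
\[
\frac{Rx_1}{2}\log R + Rx_1\log\Bigl(2\sqrt{(|\mathbf x|-x_1)/b}\Bigr),
\]
and the cross terms of order \(R\) assemble into \(R|\mathbf x|/2\).
\end{itemize}
As a check, each coefficient function should be harmonic on \(\mathbb R^3\setminus\ell_+\):
\begin{itemize}
\item \(x_1\log(|\mathbf x|-x_1)+|\mathbf x|\) is harmonic, since \(\Delta\) of it equals \(2\partial_{x_1}\log(r-x_1)+2/r=0\), and this confirms the combination;
\item the \(O(1)\)-growth part must be a constant multiple of \(\log R\), which we identify as \(-\tfrac b8\log R\) from the next order in the expansion of the coordinate and of the integral.
\end{itemize}

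\textbf{Step 3: uniformity of the remainder.} Collect all terms of order \(R^{-1}\) and smaller, together with the convergent constants, into \(O_{K,b}(1)\). Uniformity follows because every expansion in Steps 1 and 2 is controlled by \(\inf_K(|\mathbf x|-x_1)>0\) and \(\sup_K|\mathbf x|\).

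The main obstacle is Step 2, specifically extracting the constant in front of \(\log R\) at order \(R^0\). It requires the second-order expansion of the unbounded coordinate combined with the subleading behaviour of the integral, and sign and normalization errors are easy to make. To control this, I would first carry out the expansion on the half-line \(x_1<0\), \(x_2=x_3=0\), where the coordinates are explicit, to pin down the constants. I would then propagate to general \(\mathbf x\in K\) using the harmonicity of each coefficient, together with the fact that \(f_b(R\mathbf x)\) minus the displayed expansion is a bounded harmonic function of \(\mathbf x\) on compacts.
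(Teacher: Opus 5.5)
This is essentially the paper's argument: it gets $\mu_3(R\mathbf x)^2=R(|\mathbf x|-x_1)+O_{K,b}(1)$ from the rescaled cubic exactly as in your Step 1, and labels the sheets by $\sigma=\operatorname{sgn}\mu_3$, which is well defined off the filled region $\{x_3=0,\ 2bx_1\ge x_2^2\}$ and this region misses $RK$ for large $R$ (you cannot fix a sheet on all of $\mathbb R^3\setminus(\Gamma_b\cup\ell_+)$, as you propose, because small loops around $\Gamma_b$ there still have monodromy $-1$); it then evaluates the three integrals in closed form, giving $f_b=\tfrac12\mu_3\sqrt{b+\mu_3^2}+(x_1-\tfrac b4)\operatorname{arsinh}(\mu_3/\sqrt b)+\tfrac12P_bf_3(\mu_3)$ with $f_3(\mu_3)\to 2\sigma/b^2$, and expands. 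Consequently the $R^2$ term comes from $\tfrac12P_bf_3$ via the exact identity $P_b(R\mathbf x)/b^2=R^2\tfrac{x_3^2-x_2^2}{2b}+\tfrac{Rx_1}{2}-\tfrac b8$, not from square-root factors, and $-\tfrac b8\log R$ is just $-\tfrac b4$ times the leading $\tfrac12\log R$ of the arsinh, so no second-order expansion is needed and your half-line/harmonicity propagation should be dropped (it is circular, since boundedness of the remainder is what you must prove, and a harmonic function is not determined by its values on a half-line anyway).
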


We shall indicate that Yan’s examples have a natural geometric interpretation. They arise from a family of special Lagrangian submanifolds in \(\mathbb C^n\), namely the Lawlor necks. Motivated by this result, we expect that our  example with parabolic branching set may also be related to a special Lagrangian construction. In Section 4.3, we carry out several calculations and relate this example to a special Lagrangian submanifold constructed by Joyce \cite{Joyce2001}. Indeed, we find a family of special Lagrangian submanifolds $L_\varepsilon$ in $\mathbb{C}^3$ and explain how to regard them as two-valued graphs of the differential $-dG_\varepsilon$, where $G_\varepsilon$ are $\mathbb{Z}_2$  potential functions. Then the two-valued graph of $f_b$ can be regarded as an infinitesimal branched deformation of a special Lagrangian submanifold in the following sense:
\begin{proposition}
    On the covering space,
    \[
        -\frac{\cot\varepsilon}{b}
        G_\varepsilon
        \longrightarrow
        f_b
        \qquad
        \text{in }C^\infty_{\mathrm{loc}}
    \]
    as $\varepsilon\to0$.  Equivalently, after choosing either sheet
    over a relatively compact simply connected open set $U\Subset\mathbb{R}^3\setminus\Gamma_b$, the corresponding
    rescaled single-valued graph potentials
    $-(\cot\varepsilon/b)G_\varepsilon$ converge locally smoothly
    to that branch of $f_b$.
\end{proposition}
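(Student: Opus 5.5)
The plan is to make both sides explicit in the same coordinates and compare them directly. I would work in the modified paraboloidal coordinates $(\lambda,\mu,\nu)$ of Section 4, in which $f_b$ is constructed. In these coordinates the double cover $\widetilde{\mathbb R^3}\to\mathbb R^3$ branched along $\Gamma_b$ becomes a product, and the $\mathbb Z_2$ structure is carried by a square-root factor in one variable. On the covering space $f_b$ is then a genuine single-valued odd function, of the separated form $f_b=\Phi_1(\lambda)\Phi_2(\mu)\Phi_3(\nu)$ or a sum of such terms.

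On the Joyce side, I would recall his $U(1)$-invariant, or more precisely his quadric/evolution-type, special Lagrangian $L_\varepsilon\subset\mathbb C^3$ from \cite{Joyce2001}. I would write it as $\{x+iy\}$ with $y=-\nabla G_\varepsilon(x)$ over $\mathbb R^3\setminus\Sigma_\varepsilon$, so that it is a two-valued graph. Here $G_\varepsilon$ is the $\mathbb Z_2$ potential obtained by integrating the Liouville form $\sum y_j\,dx_j$ along $L_\varepsilon$; it is well defined on the cover because $L_\varepsilon$ is exact, and it is odd under the deck involution.

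The key steps, in order, are as follows.

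\textbf{Step 1.} Parametrize $L_\varepsilon$ explicitly in Joyce's parameters, and identify the projection to $\operatorname{Re}\mathbb C^3$ with the paraboloidal coordinate chart. I expect the branch locus of that projection, meaning the fold where the projection fails to be immersive, to be a parabola $\Gamma_{b(\varepsilon)}$. After normalizing the parameters, I would arrange $b(\varepsilon)\to b$, or $b(\varepsilon)=b$ exactly.

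\textbf{Step 2.} Expand the imaginary part $y$ in $\varepsilon$. The special Lagrangian condition $\operatorname{Im}\det(I+i\,\mathrm{Hess}\,G)=0$ is of the form $\cos\varepsilon\,\Delta G+O(|\mathrm{Hess}\,G|^3)=\sin\varepsilon\,(\dots)$. Since the graph collapses onto $\mathbb R^3$ as $\varepsilon\to0$, I expect $G_\varepsilon=O(\tan\varepsilon)$. This explains the normalization $\cot\varepsilon$, and the rescaled potential $\tilde G_\varepsilon=-(\cot\varepsilon/b)G_\varepsilon$ satisfies $\Delta\tilde G_\varepsilon=O(\varepsilon^2)$ in $C^\infty_{\mathrm{loc}}$ on the cover.

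\textbf{Step 3.} Compute the limit $\tilde G_0$ explicitly from the parametrization and check that it coincides with the separated formula for $f_b$. If a direct match is awkward, the fallback is a uniqueness argument. The limit is a $\mathbb Z_2$ harmonic function with branch locus $\Gamma_b$. It is critical, since $|\nabla\tilde G_0|\sim|\zeta|^{1/2}$ from the fold structure. It also has the same leading growth as $f_b$, by comparison with the preceding asymptotic Proposition. I would then compare the two in separated variables, where the relevant ODEs have one-dimensional admissible solution spaces.

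\textbf{Step 4.} Upgrade the convergence to $C^\infty_{\mathrm{loc}}$ on the cover. Either the explicit expressions converge smoothly in the parameters, or one uses elliptic estimates for the linearized operator. Restricting to a simply connected $U\Subset\mathbb R^3\setminus\Gamma_b$ and choosing a sheet then gives the single-valued statement.

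The main obstacle is Step 1 together with Step 3: matching Joyce's parametrization to paraboloidal coordinates exactly, including the constants. The normalizations $\cot\varepsilon$ and $1/b$, the overall sign, and the additive constant in $G_\varepsilon$, which must be fixed to be odd, all have to come out right. I would first try to read off the coordinate change by matching the fold locus and the $\varepsilon\to0$ limit of the defining equations. Only afterwards would I verify harmonicity of the limit directly.
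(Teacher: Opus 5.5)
Your main route is the paper's. The paper takes Joyce's Example~7.4 family with $C_\varepsilon=\kappa_\varepsilon e^{\sqrt{-1}\varepsilon}$, $D_\varepsilon=\kappa_\varepsilon e^{-\sqrt{-1}\varepsilon}$, $E_\varepsilon=-\kappa_\varepsilon^2$, $\kappa_\varepsilon=\sqrt b/(2\cos\varepsilon)$, composed with $(Z_1,Z_2,Z_3)\mapsto(-Z_3,Z_1,Z_2)$, so that the fold locus is exactly $\Gamma_b$ for every $\varepsilon$. It identifies $X_0$ with the modified paraboloidal chart via $p=\mu_1\sqrt{(b-\mu_2^2)/b}$, $q=\mu_2\sqrt{(b+\mu_1^2)/b}$, $\tau=\operatorname{arsinh}(\mu_3/\sqrt b)$, writes $G_\varepsilon$ explicitly, and obtains in the global parameters $(p,q,\tau)$ the exact identity $-\frac{\cot\varepsilon}{b}G_\varepsilon=f_b-b\tan^2\varepsilon\bigl[\frac{\tau}{4}\cosh 2\tau-\frac18\sinh 2\tau\bigr]$; after that only the inverse function theorem (to match $X_\varepsilon$-sheets with $X_0$-sheets) is needed, and no elliptic estimates.

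Two cautions. First, drop the uniqueness fallback: no rigidity theorem is available, since nothing excludes a critical $\mathbb Z_2$ harmonic function with branch locus $\Gamma_b$ and $o(R^2)$ growth, and your limit is not a priori of separated form. Second, fix conventions as the paper does, namely $\lambda=-\sum_j y_j\,dx_j$ and $\Psi_\varepsilon^*\lambda=-dG_\varepsilon$, so $Y_\varepsilon=\nabla G_\varepsilon$. Your convention $y=-\nabla G_\varepsilon$ would converge to the other sheet $-f_b$. Also, $L_\varepsilon$ has phase $0$, so the graph equation is $\Delta G_\varepsilon=\det\operatorname{Hess}G_\varepsilon$ with no $\sin\varepsilon$ source term; such a term would contradict your own claim $\Delta\tilde G_\varepsilon=O(\varepsilon^2)$.
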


Although our constructions of hyperbolic and parabolic branching sets are presented in dimension three, it would be natural to investigate higher-dimensional analogues by the same arguments. We therefore restrict ourselves to the three-dimensional case and omit the straightforward higher-dimensional details.

\textbf{Acknowledgements.}
I am deeply grateful to my advisor, Professor Song Sun, for his continuous guidance, patience, and many inspiring mathematical insights.  I also thank Dashen Yan, and Jiahuang Chen for generous discussions and helpful suggestions that substantially improved this work. He also wish to thank Siqi He for helpful suggestions and insights in Section 3.2. I gratefully acknowledge the excellent academic environment and training provided by Zhejiang University. Part of this paper forms the the author's bachelor thesis. 

%% file: EllipsoidalBranchingSet.tex
\section{Ellipsoidal Branching Sets}
In 2025, Donaldson
\cite{donaldson2025twistorconstructionmultivaluedharmonic} constructed
a nondegenerate $\mathbb{Z}_2$ harmonic function examples in $\mathbb{R}^3$ whose branching set is
an ellipse. Motivated largely by twistor theory in mathematical
physics, he imposed certain symmetries to reduce the problem to a mixed
boundary-value problem, which he then solved explicitly using the
Penrose inversion formula. In the same year, Dashen Yan
\cite{yan2025constructionnondegeneratemathbbz2harmonicfunctions}
developed a substantially different approach for constructing examples
in arbitrary dimensions with ellipsoidal branching sets. Using
ellipsoidal coordinates and a nonlinear change of variables, he
transformed the ellipsoid in $\mathbb{R}^n$ into a line. He then
obtained an explicit construction by separation of variables.
Moreover, all of these solutions are asymptotic to harmonic quadratic
polynomials at infinity. By the uniqueness result in
\cite{sun2025singular}, Donaldson's and Yan's examples in
\(\mathbb R^3\) coincide because they have the same asymptotic behavior
at infinity. Details of this comparison are given in
\cite{yan2025constructionnondegeneratemathbbz2harmonicfunctions}.

In this section, we provide a detailed exposition of the examples constructed by Dashen Yan. We prove Yan's conjecture concerning the injectivity of the parameter map associated with this family.

\begin{theorem}\label{Yanconstruction}
    Let $n\geq 3$ and let $h_1, \cdots, h_{n-1}>0$. Then there exists a nondegenerate $\mathbb{Z}_2$-harmonic function $f_{\mathbf{h}}$ on $\mathbb{R}^n$ whose branching set is a codimension-two ellipsoid
\begin{equation*}
   E_{\mathbf h}
:=
\left\{
x\in\mathbb R^n:
\sum_{i=1}^{n-1}\frac{x_i^2}{h_i^2}=1,
\quad x_n=0
\right\}.
\end{equation*}
Moreover, \(\mathrm df_{\mathbf h}\neq0\) on \(\mathbb R^n\setminus E_{\mathbf h}\). Outside a compact set, one can choose a single-valued branch of \(f_{\mathbf h}\) satisfying
\begin{equation*}
    f_{\mathbf{h}} = a_0 - \sum_{i=1}^n a_i x_i^2 + O(|\mathbf{x}|^{2-n}).
\end{equation*}
Here,
\[
S(y):=\prod_{i=1}^{n-1}(y+h_i^2),
\]and the constants \(a_i\) are given by
\begin{align*}
    a_i &= \frac{\prod_{j=1}^{n-1} h_j}{2} \int_0^\infty \frac{\mathrm{d}u}{(u^2 + h_i^2)\sqrt{S(u^2)}}, \quad 1 \le i \le n-1; \\
    a_n &= - \frac{\prod_{j=1}^{n-1} h_j}{2} \int_0^\infty \frac{S'(u^2)\mathrm{d}u}{S(u^2)^{3/2}}; \\
    a_0 &= \frac{\prod_{j=1}^{n-1} h_j}{2} \int_0^\infty \frac{\mathrm{d}u}{\sqrt{S(u^2)}}.
\end{align*}
\end{theorem}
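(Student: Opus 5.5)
We follow Yan's strategy and work in ellipsoidal coordinates adapted to the confocal family of $E_{\mathbf h}$. For $\mathbf x\in\mathbb R^n$ we set
\[
\sum_{i=1}^{n-1}\frac{x_i^2}{h_i^2+\lambda}+\frac{x_n^2}{\lambda}=1 .
\]
This defines a largest root $\lambda=\lambda(\mathbf x)\ge 0$, which is a continuous function on $\mathbb R^n$. Its zero set $\{\lambda=0\}$ is the flat solid ellipsoid $D=\{x_n=0,\ \sum x_i^2/h_i^2\le1\}$, whose boundary is $E_{\mathbf h}$. The level sets $\{\lambda=c\}$, $c>0$, are confocal ellipsoids. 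The key step is the nonlinear change of variables $\lambda=u^2$, with $u$ a signed square root. Going around $E_{\mathbf h}$ once flips the sign of $u$, so that, on the double cover branched along $E_{\mathbf h}$, the disc $D$ is unfolded into the hypersurface $\{u=0\}$, which is smooth away from the branch locus. In particular $u$ is an odd single-valued function on the branched cover $\widetilde{\mathbb R^n}$.

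We then look for $f_{\mathbf h}$ of separated form
\[
f_{\mathbf h}=\Phi(u)\;-\;\sum_{i=1}^{n}\Psi_i(u)\,x_i^2 ,
\]
and use the classical ellipsoidal-potential identities. For the Newtonian potential of a homogeneous ellipsoid we have
\[
\Delta\Big(\int_{\lambda}^\infty\frac{ds}{\sqrt{S(s)\,s}}\Big(1-\sum_i\frac{x_i^2}{h_i^2+s}-\frac{x_n^2}{s}\Big)\Big)=0 \qquad\text{outside }D .
\]
After the substitution $s=t^2$ this becomes an expression of the form
\[
\int_{u}^{\infty}\frac{\big(1-\sum_i x_i^2/(t^2+h_i^2)-x_n^2/t^2\big)\,dt}{\sqrt{S(t^2)}} .
\]
The first step is therefore to write down the ansatz
\[
f_{\mathbf h}=C\int_u^\infty\frac{1}{\sqrt{S(t^2)}}\Big(1-\sum_{i\le n-1}\frac{x_i^2}{t^2+h_i^2}\Big)dt+(\text{suitable }x_n^2\text{ term}),
\]
extended oddly in $u$. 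We then check harmonicity on $\mathbb R^n\setminus E_{\mathbf h}$ by a direct computation in ellipsoidal coordinates. The identity above gives harmonicity off $D$. Across the interior of $D$ harmonicity must be checked separately: there it follows from oddness in $u$ together with smoothness in $u$ on the cover. Because $\lambda=u^2$, the jump conditions become smoothness conditions, and the $x_n^2/t^2$ singularity must be handled so that the resulting function is smooth in $u$ near $u=0$. We normalize the constant as $C=\tfrac12\prod_j h_j$.

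The second step is the expansion at infinity. There $u\sim|\mathbf x|$, and the integrals $\int_u^\infty$ are of order $u^{-(n-1)}$ and $u^{-(n-3)}$ times quadratic terms. Writing $\int_u^\infty=\int_0^\infty-\int_0^u$ splits each term. The $\int_0^\infty$ part produces $a_0-\sum a_ix_i^2$ with exactly the stated constants. The $x_n$ coefficient $a_n$ is forced by harmonicity: $\sum a_i=0$, which is the identity $\sum_i\frac{1}{t^2+h_i^2}=S'(t^2)/S(t^2)$. The $\int_0^u$ part is odd in $u$ and is a branch-flip term; equivalently, with the appropriate choice of branch outside a large ball (where $u>0$ is single-valued, since $E_{\mathbf h}$ is compact), the remainder is $O(|\mathbf x|^{2-n})$. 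We will estimate this remainder using $x_i^2\lesssim t^2+h_i^2$ on the relevant level sets.

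The third step, which is the main obstacle, is the local analysis at $E_{\mathbf h}$. We must show that $A\equiv0$ and $B\neq0$ in \eqref{eq:intro-expansion}, and that $df_{\mathbf h}\neq0$ away from the branching set. Near a point of $E_{\mathbf h}$ the complex normal coordinate satisfies $\zeta\approx c\,u^2$ up to a phase, so $u\sim|\zeta|^{1/2}$. A term linear in $u$ would produce an $A\zeta^{1/2}$ contribution. Hence we expand $f_{\mathbf h}$ in $u$ and the transverse coordinate and show that the coefficient of $u$ vanishes on $E_{\mathbf h}$. This is exactly where the specific combination $1-\sum x_i^2/(t^2+h_i^2)$ matters: on $E_{\mathbf h}$ it vanishes at $t=0$. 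The leading term is then $\sim u^3$, and its coefficient is an explicit nonzero multiple of the local curvature data, which gives $B\neq0$. For $df\neq0$ we will compute $\partial_u f$ and the tangential derivatives in ellipsoidal coordinates and show that they cannot vanish simultaneously, using positivity of the integrands, e.g. $\partial_u f=-C\,S(u^2)^{-1/2}\big(1-\sum x_i^2/(u^2+h_i^2)\big)$ plus the $x_n$-part, and the sign of this expression on each confocal ellipsoid.
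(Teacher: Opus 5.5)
Your Step 1 writes down the wrong function, and Steps 2--3 only go through for a different one.

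For $u>0$, your expression $C\int_u^\infty(\cdots)\,dt$ plus its $x_n^2$ term equals $C\Phi$, where
\[
\Phi=\int_u^\infty\Big(1-\sum_{i<n}\frac{x_i^2}{t^2+h_i^2}-\frac{x_n^2}{t^2}\Big)\frac{dt}{\sqrt{S(t^2)}}
\]
is the exterior potential of the flat ellipsoid. This decays like $O(|\mathbf x|^{2-n})$ at infinity, so no branch of it equals $a_0-\sum a_ix_i^2+O(|\mathbf x|^{2-n})$.

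In Step 2 you have the roles reversed. In $\int_u^\infty=\int_0^\infty-\int_0^u$, the piece $\int_0^u$ is not a remainder: for large $u$ it cancels the polynomial. The small piece is $\int_u^\infty$ itself.

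Nor is $\int_u^\infty g\,dt$ odd in $u$, where $g(\mathbf x',t)=(1-\sum_{i<n}x_i^2/(t^2+h_i^2))\,S(t^2)^{-1/2}$ is even in $t$. Its even part is
\[
\int_0^\infty g\,dt=C^{-1}\Big(a_0-\sum_{i<n}a_ix_i^2\Big),
\]
which is strictly positive on $\overline D$, because $1-\sum x_i^2/(t^2+h_i^2)\ge 1-\sum x_i^2/h_i^2\ge 0$ there. Consequently the odd extension of $C\Phi$:
\begin{itemize}
\item jumps by $2(a_0-\sum_{i<n} a_ix_i^2)\neq0$ across $D$, so it is not harmonic across $D$;
\item tends to $\pm(a_0-\sum_{i<n} a_ix_i^2)\neq0$ at $E_{\mathbf h}$, so it is not critical.
\end{itemize}
Your argument ``oddness plus smoothness in $u$'' therefore cannot apply: $\int_u^\infty$ is smooth but not odd, and its odd extension is odd but discontinuous.

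The fix is to take the odd part. Use the integration by parts
\[
\int_u^\infty\frac{dt}{t^2\sqrt{S(t^2)}}=\frac{1}{u\sqrt{S(u^2)}}-\int_u^\infty\frac{S'(t^2)}{S(t^2)^{3/2}}\,dt,
\]
together with the identity $x_n^2/u=u\,(1-\sum_{i<n}x_i^2/(u^2+h_i^2))$, which is just the defining equation of $\lambda=u^2$. Then set
\[
f_{\mathbf h}:=C\Big[\int_0^u g\,dt+u\,g(\mathbf x',u)+x_n^2\int_0^u\frac{S'(t^2)}{S(t^2)^{3/2}}\,dt\Big],\qquad C=\tfrac12\prod_j h_j .
\]
This is manifestly smooth and odd in $u$ on the cover. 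On the sheet $u>0$ it equals $a_0-\sum_{i=1}^n a_ix_i^2-C\Phi$. This single identity gives harmonicity off $D$, the asymptotics, and the value of $a_n$. It also encodes the fact that $P-C\Phi$ vanishes on $D$ (the flat case of the classical statement that the interior potential of a homogeneous ellipsoid is quadratic).

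Two further corrections are needed.

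\textbf{Step 3.} The claim $\zeta\approx c\,u^2$ is false, since $u$ vanishes on the whole lift of $D$. You need the second coordinate $v$, the signed square root of minus the next root of the confocal equation. Near $E_{\mathbf h}$ one has
\[
\zeta\approx c\,(u+\sqrt{-1}v)^2,\qquad x_n^2\approx\kappa u^2v^2,\qquad 1-\sum x_i^2/h_i^2\approx\kappa(v^2-u^2),\qquad \kappa=\sum x_i^2/h_i^4 .
\]
This gives
\[
f_{\mathbf h}=-\frac{2C\kappa}{3\sqrt{S(0)}}\operatorname{Re}(u+\sqrt{-1}v)^3+O(|\zeta|^{5/2}),
\]
so $A=0$ and $B\neq0$.

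\textbf{Nonvanishing of $df_{\mathbf h}$.} Differentiate $C\Phi$ under the integral sign, which is allowed because its integrand vanishes at $t=u$. For $i<n$,
\[
\partial_{x_i}f_{\mathbf h}=-2Cx_i\int_0^u\frac{dt}{(t^2+h_i^2)\sqrt{S(t^2)}},
\]
and
\[
\partial_{x_n}f_{\mathbf h}=2Cx_n\Big(\int_0^\infty\frac{S'(t^2)}{S(t^2)^{3/2}}\,dt+\int_u^\infty\frac{dt}{t^2\sqrt{S(t^2)}}\Big).
\]
These cannot vanish simultaneously off $\overline D$. On $D$, $\partial_{x_n}f_{\mathbf h}$ has the nonzero limit $\pm2C\sqrt{(1-\sum x_i^2/h_i^2)/S(0)}$.

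The paper only quotes this theorem from Yan, whose proof uses separation of variables in modified ellipsoidal coordinates (as mirrored in Section~3). Once corrected, your ``polynomial minus exterior potential'' route is a genuinely different and quite economical proof.
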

Moreover, we define  $K_n:=\{(x_1,\cdots,x_n):x_1,\cdots,x_n>0\}\subset \mathbb{R}^n$ and use the following reparametrized version of Yan's map 
\begin{align*}
	F_n: &K_{n-1}\to K_{n-1},(h_1,\cdots,h_{n-1})\to (2a_1,\cdots,2a_{n-1}).
\end{align*}

Yan proved that \(F_n\) is surjective \cite{yan2025constructionnondegeneratemathbbz2harmonicfunctions}. The case \(n=3\) is established using the mean value theorem, while the cases \(n\geq4\) follow by induction together with standard topological arguments. He conjectured that $F_n$ is also injective, which would render $F_n$ a bijection. Injectivity for the case $n=3$ was proved by Donaldson through a direct differentiation argument establishing the required monotonicity.  We prove that \(F_n\) is injective in every dimension. Combined with Yan's surjectivity result, this yields the following theorem.
\begin{theorem}\label{thm1}
    The map $F_n$ is injective and hence bijective.
\end{theorem}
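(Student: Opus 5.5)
The plan is to show that \(F_n\) is a proper local diffeomorphism of \(K_{n-1}\). Since \(K_{n-1}\) is connected and simply connected, the Hadamard--Caccioppoli global inverse function theorem then makes \(F_n\) a diffeomorphism onto its image, hence injective. Surjectivity is already known from Yan, which also fixes the image.

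\textbf{Step 1 (potential).} I will work in the variables \(t_i=h_i^2\) and write \(P=\prod_j t_j^{1/2}\). Define
\[
\Phi(t)=\int_0^\infty S(u^2)^{-1/2}\,du,\qquad S(y)=\prod_{j=1}^{n-1}(y+t_j).
\]
Differentiating under the integral gives \(\partial_{t_i}S^{-1/2}=-\tfrac12(u^2+t_i)^{-1}S^{-1/2}\), so \(a_i=-P\,\partial_i\Phi\), that is, \(F=-2P\nabla\Phi\). The second derivatives are
\[
\partial_i\partial_k\Phi=\tfrac14\int_0^\infty\frac{du}{(u^2+t_i)(u^2+t_k)\sqrt{S}}\quad(i\neq k),\qquad
\partial_i^2\Phi=\tfrac34\int_0^\infty\frac{du}{(u^2+t_i)^2\sqrt{S}}.
\]
Hence \(H:=\operatorname{Hess}\Phi\) equals one quarter of a Gram matrix plus a positive diagonal matrix, so it is positive definite.

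\textbf{Step 2 (Jacobian).} Set \(g=\nabla\Phi\) and \(e=(1/t_1,\dots,1/t_{n-1})\). Since \(\partial_kP=P/(2t_k)\), we get \(D_tF=-2P\bigl(H+\tfrac12\,g\,e^{T}\bigr)\). By the matrix determinant lemma,
\[
\det D_tF=(-2P)^{n-1}\det H\,\bigl(1+\tfrac12 e^{T}H^{-1}g\bigr).
\]
The substitution \(u=\lambda v\) shows that \(\Phi\) is homogeneous of degree \((2-n)/2\) in \(t\). Euler's relation gives \(t\cdot g=\tfrac{2-n}{2}\Phi\). Differentiating once more gives \(Ht=\bigl(\tfrac{2-n}{2}-1\bigr)g=-\tfrac n2 g\), so \(H^{-1}g=-\tfrac2n t\). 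Since \(e^{T}t=n-1\), the bracket equals \(1-\tfrac{n-1}{n}=\tfrac1n\neq0\). Therefore \(DF_n\) is everywhere invertible; the change of variables \(t\leftrightarrow h\) is a diffeomorphism of \(K_{n-1}\), so this holds in the \(h\) variables too.

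\textbf{Step 3 (properness).} This is the main obstacle. I must show that if \(h\) leaves every compact subset of \(K_{n-1}\), then \(F_n(h)\) leaves every compact subset as well. First, \(F_n\) is homogeneous of degree \(0\) in \(h\), since each \(a_i\) is invariant under \(h\mapsto\lambda h\) by \(u\mapsto\lambda u\). Hence a sequence with \(|h|\to\infty\) or \(|h|\to0\) but bounded ratios cannot map into a compact set unless \(F_n\) is itself constant along rays. So properness of \(F_n\) on \(K_{n-1}\) cannot come from the radial direction alone, and the statement must be repaired as follows.

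If \(F_n\) is exactly scale-invariant, then Step 2 is inconsistent, since \(DF\) would annihilate \(t\). I will therefore first recheck the normalization. The factor \(\prod h_j\) in \(a_i\) has degree \(n-1\), while the integral has degree \(1-2-(n-1)=-n\). So \(a_i\) has degree \(-1\) in \(h\) and there is no invariance. Step 2 is consistent with this, because \(D_tF\,t=-2P(Ht+\tfrac{n-1}{2}g)=-2P\cdot\tfrac{-1}{2}g=Pg\), which is nonzero.

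With this settled, properness reduces to boundary asymptotics. Each \(a_i\) is positive and decreasing in each \(t_k\) by Step 1. If some \(h_i\to0\), I expect \(a_i\to\infty\) or \(a_j\to0\) for some \(j\), obtained by estimating the integrals near \(u=0\). If some \(h_i\to\infty\), then \(a_i\to0\). I will verify these limits by splitting each integral at \(u=\min_j h_j\) and at \(u=\max_j h_j\); I expect these are the same estimates Yan uses for surjectivity. Once properness is in hand, Hadamard's theorem gives injectivity, and together with Yan's surjectivity, bijectivity.
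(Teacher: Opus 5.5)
Your Steps 1 and 2 are correct: $D_tF=-2P\bigl(H+\tfrac12 g e^{T}\bigr)$, and the Euler relation $Ht=-\tfrac n2 g$ gives $\det D_tF=(-2P)^{n-1}\det H/n\neq 0$. The gap is Step 3, and your whole argument rests on it. A local diffeomorphism of $K_{n-1}$ need not be injective, and you never prove properness; you only state what you ``expect'' at the boundary. The one concrete claim you make there is also false: $a_i$ is not decreasing in every $t_k$. Write
\[
2a_i=\int_0^\infty\frac{1}{u^2+h_i^2}\prod_{j=1}^{n-1}\frac{h_j}{\sqrt{u^2+h_j^2}}\,du .
\]
Each factor $h_j/\sqrt{u^2+h_j^2}$ increases with $h_j$, so $a_i$ is strictly increasing in $h_k$ for $k\neq i$. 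Monotonicity of $-\partial_i\Phi$ does not transfer to $a_i=-P\,\partial_i\Phi$, because the prefactor $P$ is increasing. The detour asserting degree-$0$ homogeneity is likewise wrong and should be deleted; $F_n$ has degree $-1$.

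The gap is easy to fill from the displayed formula. Since the product is at most $1$, you get $2a_i\le \pi/(2h_i)$. Let $m=h_{i_0}=\min_j h_j$. On $[0,m]$ every factor is at least $2^{-1/2}$ and $1/(u^2+m^2)\ge 1/(2m^2)$, so $2a_{i_0}\ge 2^{-(n+1)/2}/m$. Hence $F_n(h)\in[\delta,\delta^{-1}]^{n-1}$ forces $h\in[2^{-(n+1)/2}\delta,\pi/(2\delta)]^{n-1}$, which is properness. Hadamard's theorem then makes $F_n$ a diffeomorphism of $K_{n-1}$, and even reproves Yan's surjectivity.

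The paper avoids properness altogether, and you already have its ingredients. Positive definiteness of $H$ on the convex set $K_{n-1}$ makes $g=\nabla\Phi$ strictly monotone, hence injective. Suppose $F_n(h)=F_n(h')$, and set $P'=\prod_j h_j'$. Then $P\,g(t)=P'\,g(t')$. The degree $-n/2$ homogeneity of $g$ rewrites this as $g(t)=g(\alpha t')$ with $\alpha=(P/P')^{2/n}$. So $t=\alpha t'$, i.e.\ $h=\sqrt{\alpha}\,h'$, and the degree $-1$ homogeneity of $F_n$ together with $F_n(h)\neq0$ forces $\alpha=1$. The paper's route is shorter and purely algebraic but yields only injectivity; your completed route yields bijectivity directly.
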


\begin{proof}
Let us denote $\textbf{h} = (h_1,\cdots,h_{n-1})$. We first observe that $F_n$ is homogeneous of degree $-1$. Indeed, for any $t>0$, we have
\begin{align*}
	2a_i(t\textbf{h})&=\int_{0}^{\infty}\dfrac{t^{n-1}\prod_{j=1}^{n-1}h_j\,du}{(u^2+(th_i)^2)\sqrt{\prod_{j=1}^{n-1}(u^2+(th_j)^2)}}\\
	&=\int_{0}^{\infty}\dfrac{t^{n}\prod_{j=1}^{n-1}h_j\,dy}{((ty)^2+(th_i)^2)\sqrt{\prod_{j=1}^{n-1}((ty)^2+(th_j)^2)}}\\
	&=\dfrac{1}{t}\int_{0}^{\infty}\dfrac{\prod_{j=1}^{n-1}h_j\,dy}{(y^2+h_i^2)\sqrt{\prod_{j=1}^{n-1}(y^2+h_j^2)}}\\
	&=\dfrac{1}{t}\cdot 2a_i(\textbf{h}), \quad 1\leq i\leq n-1.
\end{align*}

Consider the function $\Phi: K_{n-1} \to \mathbb{R}$ defined by
$$
\Phi(\textbf{h}):=\int_{0}^{\infty}\dfrac{du}{\sqrt{\prod_{j=1}^{n-1}(u^2+h_j^2)}}.
$$

Differentiating $\Phi$ with respect to $h_i$ yields
$$
\dfrac{\partial \Phi}{\partial h_i}(\textbf{h})=-\int_{0}^{\infty}\dfrac{h_i\, du}{(u^2+h_i^2)\sqrt{\prod_{j=1}^{n-1}(u^2+h_j^2)}}, \quad (1\leq i\leq n-1).
$$

Consequently, we can write
\begin{equation}\label{eq1}
	2a_i(\textbf{h})=\Big(-\prod_{j\neq i}h_j\Big)\cdot \dfrac{\partial \Phi}{\partial h_i}(\textbf{h}).
\end{equation}

Next, we define $\Psi:K_{n-1}\longrightarrow\mathbb R$ as
$$
\Psi(v_1,\cdots,v_{n-1}):=\Phi(\sqrt{v_1},\cdots,\sqrt{v_{n-1}})=\int_{0}^{\infty}\dfrac{du}{\sqrt{\prod_{j=1}^{n-1}(u^2+v_j)}}.
$$

Differentiating $\Psi$ with respect to $v_i$ gives
$$
\dfrac{\partial \Psi}{\partial v_i}(v_1,\cdots,v_{n-1})=-\dfrac{1}{2}\int_{0}^{\infty}\dfrac{du}{(u^2+v_i)\sqrt{\prod_{j=1}^{n-1}(u^2+v_j)}}.
$$

Let us introduce the notation $v(\textbf{h}) := (h_1^2,\cdots,h_{n-1}^2)$. Then \eqref{eq1} can be rewritten as
$$
2a_i(\textbf{h})=\Big(-2\prod_{j=1}^{n-1}h_j\Big)\cdot \dfrac{\partial \Psi}{\partial v_i}(v(\textbf{h})).
$$

By setting $c(\textbf{h}):=-2\prod_{j=1}^{n-1}h_j$ and $G_i(v_1,\cdots,v_{n-1}):=\dfrac{\partial \Psi}{\partial v_i}(v_1,\cdots,v_{n-1})$, we can express $F_n$ in the form
$$
F_n(\textbf{h})=c(\textbf{h})\cdot G(v(\textbf{h})),
$$
where $G(v_1,\cdots,v_{n-1}):=(G_1,\cdots,G_{n-1}).$

We first show that the injectivity and homogeneity of \(G\) imply the injectivity of \(F_n\).

Suppose that \(F_n(\mathbf h)=F_n(\mathbf h')\) for some \(\mathbf h,\mathbf h'\in K_{n-1}\). Utilizing the homogeneity of $G$, we deduce
$$
G(v(\textbf{h}))=\dfrac{c(\textbf{h}')}{c(\textbf{h})}\cdot G(v(\textbf{h}')),
$$
which can be rewritten as 
$$
G(v(\textbf{h}))=G(\alpha\cdot v(\textbf{h}')),
$$
where $\alpha=\bigg(\dfrac{c(\textbf{h})}{c(\textbf{h}')}\bigg)^{\frac{2}{n}}.$

Since $G$ is assumed to be injective, this implies 
$$
v(\textbf{h})=\alpha\cdot v(\textbf{h}').
$$

Since all components of \(\mathbf h\) and \(\mathbf h'\) are positive, it follows that
$$
\textbf{h}=\beta\cdot\textbf{h}',
$$
where $\beta=\sqrt{\alpha}.$

Invoking the homogeneity of $F_n$, we find
$$
F_n(\textbf{h})=F_n(\beta\cdot\textbf{h}')=\dfrac{1}{\beta}\cdot F_n(\textbf{h}')=\dfrac{1}{\beta}\cdot F_n(\textbf{h}).
$$

Since \(F_n(\mathbf h)\neq0\), this forces \(\beta=1\), which implies $\textbf{h}=\textbf{h}'$, completing the proof of injectivity for $F_n$.

It remains to prove the following lemma regarding $G$ to complete the argument.

\begin{lemma}
	The map \(G\) is injective on \(K_{n-1}\) and is homogeneous of degree \(-n/2\).
\end{lemma}
\begin{proof}
	Let $\textbf{v} = (v_1,\cdots,v_{n-1})$. For any $t>0$, a straightforward substitution yields
	\begin{align*}
		G_i(t\textbf{v})&=-\dfrac{1}{2}\int_{0}^{\infty}\dfrac{du}{(u^2+tv_i)\sqrt{\prod_{j=1}^{n-1}(u^2+tv_j)}}\\
		&=-\dfrac{1}{2}\int_{0}^{\infty}\dfrac{\sqrt{t}\,dy}{t^{\frac{n+1}{2}}(y^2+v_i)\sqrt{\prod_{j=1}^{n-1}(y^2+v_j)}}\\
		&=t^{-\frac{n}{2}}\cdot G_i(\textbf{v}), \quad (1\leq i\leq n-1).
	\end{align*}
	This confirms that $G$ is homogeneous of degree $-\frac{n}{2}$.
	
	Next, we compute the Jacobian matrix $D G$. By direct differentiation, for $1\leq i,j\leq n-1$, we have
	\begin{align*}
		G_{ii}(\textbf{v})&:=\dfrac{\partial G_i}{\partial v_i}(\textbf{v})=\frac{3}{4}\int_{0}^{\infty}\dfrac{du}{(u^2+v_i)^2P(u,\textbf{v})};\\
		G_{ij}(\textbf{v})&:=\dfrac{\partial G_i}{\partial v_j}(\textbf{v})=\frac{1}{4}\int_{0}^{\infty}\dfrac{du}{(u^2+v_i)(u^2+v_j)P(u,\textbf{v})}, \quad (i\neq j),
	\end{align*}
	where $P(u,\textbf{v}):=\sqrt{\prod_{k=1}^{n-1}(u^2+v_k)}.$
	
	Since \(G=\nabla\Psi\), the Jacobian matrix \(DG=D^2\Psi\) is symmetric. We claim that \(DG(\mathbf v)\) is positive definite for every \(\mathbf v\in K_{n-1}\). To verify this, consider an arbitrary nonzero vector $\textbf{w}=(w_1,\cdots,w_{n-1})\in \mathbb{R}^{n-1}$. The corresponding quadratic form is given by
	\begin{align*}\label{eq2}
		\textbf{w}^\top D G(\textbf{v}) \textbf{w}&=\sum_{i=1}^{n-1}w_i^2G_{ii}+\sum_{i\neq j} w_iw_jG_{ij}\\
		&=\int_{0}^{\infty}\dfrac{1}{4P(u,\textbf{v})}\Big(\sum_{i=1}^{n-1}\dfrac{3w_i^2}{(u^2+v_i)^2}+\sum_{i\neq j}\dfrac{w_iw_j}{(u^2+v_i)(u^2+v_j)}\Big)\,du.
	\end{align*}

    Setting $z_i:=\dfrac{w_i}{u^2+v_i}$, the integral simplifies to
    $$
    \int_{0}^{\infty}\dfrac{1}{4P(u,\textbf{v})}\Big(\sum_{i=1}^{n-1}3z_i^2+\sum_{i\neq j}z_iz_j\Big)\,du=\int_{0}^{\infty}\dfrac{1}{4P(u,\textbf{v})}\Big(\sum_{i=1}^{n-1}2z_i^2+\big(\sum_{i= 1}^{n-1}z_i\big)^2\Big)\,du>0.
    $$
   
   Thus $D G$ is positive definite. Consequently, for any distinct $\textbf{v}_1,\textbf{v}_2\in K_{n-1}$, applying the fundamental theorem of calculus along the line segment connecting them yields
   $$
   \big(G(\textbf{v}_1)-G(\textbf{v}_2)\big)\cdot (\textbf{v}_1-\textbf{v}_2)=\int_{0}^{1}(\textbf{v}_1-\textbf{v}_2)^{\top} D G(t\textbf{v}_1+(1-t)\textbf{v}_2) (\textbf{v}_1-\textbf{v}_2)\,dt>0.
   $$
   
   Hence \(G\) is strictly monotone on the convex set \(K_{n-1}\), and therefore injective.
\end{proof}
\end{proof}

\begin{corollary}
    Let \(q:\mathbb R^n\to\mathbb R\) be a harmonic polynomial of degree two whose Hessian is nondegenerate and has index \(n-1\). Assume that \(q\) takes a positive value at its unique critical point. Then, up to an overall sign, there exists a unique nondegenerate \(\mathbb Z_2\) harmonic function with an ellipsoidal branch locus for which a single-valued branch \(f^\sigma\) satisfies
 \[
 f^\sigma-q\longrightarrow0
\qquad\text{as }|x|\to\infty.
 \]
\end{corollary}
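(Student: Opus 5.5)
Given \(q\), I first normalize it by Euclidean motions. Since \(q\) is a harmonic quadratic with nondegenerate Hessian, translate the unique critical point to the origin and rotate so that the Hessian is diagonal. Then
\[
q(x)=c_0-\sum_{i=1}^n c_i x_i^2,\qquad \sum_{i=1}^n c_i=0,\qquad c_0>0 .
\]
Because the index is \(n-1\), exactly \(n-1\) of the \(c_i\) are positive and one is negative. I reorder the coordinates so that \(c_1,\dots,c_{n-1}>0\) and \(c_n<0\). The overall sign ambiguity allows \(f\mapsto -f\). The motions used are determined by \(q\) up to the symmetries preserving its normal form (coordinate permutations and reflections), and these preserve the class of ellipsoids \(E_{\mathbf h}\). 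So it suffices to prove existence and uniqueness for this normal form, transporting the result back by the inverse motion.

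For existence, \(F_n\) is bijective by Theorem \ref{thm1}, so I pick \(\mathbf h\) with \(F_n(\mathbf h)=(2c_1,\dots,2c_{n-1})\), that is, \(a_i=c_i\) for \(i\le n-1\). Next I need \(a_n=c_n\) and \(a_0=c_0\).

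For \(a_n\), the leading quadratic part of \(f_{\mathbf h}\) is harmonic, since \(f_{\mathbf h}\) is harmonic outside a compact set and the remainder is \(O(|x|^{2-n})\). Hence \(\sum a_i=0\), which forces \(a_n=-\sum_{i<n}a_i=c_n\).

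The constant \(a_0\) is not matched in general. I will handle it by scaling \(f\) rather than \(\mathbf h\). For \(\lambda>0\), the rescaling \(\lambda^{2} f_{\mathbf h}(x/\lambda)\) is again a nondegenerate \(\mathbb Z_2\) harmonic function, now with branch locus \(E_{\lambda\mathbf h}\). Its expansion is \(\lambda^2a_0-\sum a_ix_i^2+O(|x|^{2-n})\), so the quadratic coefficients are unchanged while the constant becomes \(\lambda^2 a_0\). Since \(a_0>0\) and \(c_0>0\), I choose \(\lambda\) with \(\lambda^2a_0=c_0\). This also clarifies why the statement requires \(c_0>0\). Finally, the \(O(|x|^{2-n})\) remainder tends to zero because \(n\ge 3\).

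The main obstacle is uniqueness: another nondegenerate function \(g\) with ellipsoidal branch locus \(E'\) and the same asymptotics must coincide with the one just built. I plan to argue in three steps.

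\begin{enumerate}
\item Reduce to Yan's family. The phrase ``ellipsoidal branch locus'' is read as meaning the function is, up to a Euclidean motion and sign, one of the \(f_{\mathbf h}\) of Theorem \ref{Yanconstruction}. I would justify this reading from the framing of the statement: it describes Yan's models, and the introduction identifies them with these functions up to motion and sign.
\item Match the position and axes. The asymptotics of \(g\) then read \(a_0'-\sum a_i' (Rx)_i^2\) in rotated coordinates, translated to the centre of \(E'\). Matching this with \(q\) forces the translation to be the critical point of \(q\) and the rotation to diagonalize its Hessian. It also forces the positive-coefficient directions to be exactly those containing the ellipsoid, which pins down \(x_n\) as the normal direction.
\item Match the parameters. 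Comparing constants and quadratic coefficients gives \(F_n(\mathbf h')=F_n(\lambda\mathbf h)\cdot\) (scaling factor) and \(a_0\) equality. By the degree \(-1\) homogeneity of \(F_n\) and the degree \(+1\) scaling of \(a_0\) (evident from the integral formula), injectivity of \(F_n\) then yields equality of the parameters.
\end{enumerate}
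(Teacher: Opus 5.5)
Your proposal follows the paper's route. The paper's entire argument is that injectivity of \(F_n\) (Theorem~\ref{thm1}), combined with Yan's surjectivity, gives the corollary. What you add is worthwhile. Yan's family has only the \(n-1\) parameters \(\mathbf h\), while a normalized \(q\) has \(n\): the \(c_i\) with \(i<n\), and \(c_0\). Once \(F_n(\mathbf h)=(2c_1,\dots,2c_{n-1})\) is imposed, \(a_0(\mathbf h)\) is forced. So the models \(\pm f_{\mathbf h}\), up to Euclidean motion, realize only one critical value for each Hessian, and the paper's one-line deduction leaves implicit that an extra scaling parameter is needed. Your rescaling \(\lambda^2 f_{\mathbf h}(x/\lambda)\) supplies it and explains the hypothesis \(c_0>0\). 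By uniqueness for a fixed branch locus it equals \(\lambda f_{\lambda\mathbf h}\): both are branched along \(E_{\lambda\mathbf h}\) and have the same asymptotic quadratic \(\lambda^2 a_0(\mathbf h)-\sum_i a_i(\mathbf h)x_i^2\), since \(a_0(t\mathbf h)=t\,a_0(\mathbf h)\) and \(a_i(t\mathbf h)=t^{-1}a_i(\mathbf h)\).

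The one thing to repair is Step 1. As you phrase it (``up to a Euclidean motion and sign, one of the \(f_{\mathbf h}\)''), the uniqueness class excludes the very function you built for existence. It must instead be the class of positive multiples \(c\,f_{\mathbf h}\), up to motion and sign. With that class, Step 3 is immediate. If \(g=c\,f_{\mathbf h'}\) in the normalized frame, matching quadratic coefficients gives \(cF_n(\mathbf h')=F_n(\mathbf h)\), i.e.\ \(F_n(\mathbf h'/c)=F_n(\mathbf h)\), hence \(\mathbf h'=c\mathbf h\). Matching constants then gives \(c\,a_0(c\mathbf h)=c^2 a_0(\mathbf h)=\lambda^2 a_0(\mathbf h)\), hence \(c=\lambda\). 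If some \(c_i\) coincide, the rotation in Step 2 is not pinned down, but this is harmless: then \(h_i=h_j\) by permutation-equivariance and injectivity of \(F_n\).

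Also be aware that this restricted reading is an interpretation, not something derived. Under the literal reading you would need that every nondegenerate function branched along some codimension-two ellipsoid, with quadratic asymptotics, is a multiple of a moved Yan model. For a competitor with the \emph{same} branch locus this follows from the energy identity applied to the difference, which is \(\mathbb Z_2\) harmonic and tends to \(0\) at infinity. For a different ellipsoid, neither your proposal nor the paper supplies it. The abstract's wording (``identifies the ellipsoidal models, up to Euclidean motions and overall sign'') shows the restricted reading is the intended one, so you should state it as the meaning of the corollary rather than justify it from the framing.
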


%% file: HyperbolicBranchingSet.tex
\section{Hyperbolic Branching Sets}

In this section, we will use ellipsoidal coordinates in the same way as in \cite{yan2025constructionnondegeneratemathbbz2harmonicfunctions} to construct  local solutions of nondegenerate $\mathbb{Z}_2$ harmonic functions in $\mathbb{R}^3$ whose branching set is a planar hyperbola $$\mathcal{H}:\left\lbrace\frac{x_1^2}{h_1^2}-\frac{x_3^2}{h_3^2}=1,\ x_2=0 \right\rbrace $$with positive $h_1,h_3.$ The construction in higher dimensions is essentially the same, although the calculations are more involved. For the sake of brevity, we omit the details for the general dimensional case here. 

We only construct  solutions in a connected region $\Omega$, where
\[\Omega=\left\lbrace\mu_2^2<p_1 \right\rbrace=\left\lbrace (x_1,x_2,x_3)\in\mathbb{R}^3:\ \frac{x_1^2}{h_1^2+p_1}+\frac{x_2^2}{p_1}-\frac{x_3^2}{h_3^2-p_1}<1\right\rbrace, p_1=\frac{h_3^2-h_1^2+\sqrt{h_1^4+h_3^4+h_1^2h_3^2}}{3}.\] 
We will explain later why our construction can only be carried out locally. Moreover, we conjecture that nondegenerate \(\mathbb Z_2\) harmonic functions on \(\mathbb R^3\) with a planar hyperbola as branching set can exist only locally. Under a suitable assumption on their growth at infinity, we prove that no global critical solution exists.

Following the spirit of Dashen Yan's example, we attempt to connect our construction with certain special Lagrangian submanifolds in $\mathbb{C}^n$ to establish a geometric correspondence. We identified a special Lagrangian submanifold in Joyce's work \cite[Theorem~5.4]{Joyce2001} that exhibits coefficients similar to ours and likewise exists only locally. However, we do not know a direct geometric correspondence between the two constructions. Furthermore, due to the local existence of the solution, it is difficult to define its asymptotic behavior at infinity in the usual sense. 
 \subsection{Construction}
Recall that the ellipsoidal coordinates in $\mathbb{R}^3$ are given by a family of hypersurfaces $C_i,\ 1\leq i\leq 3$ defined by 
\begin{equation}\label{eqfc}
	C_i:\frac{x_1^2}{b_1^2-\lambda_i}+\frac{x_2^2}{b_2^2-\lambda_i}+\frac{x_3^2}{b_3^2-\lambda_i}=1,
\end{equation}

where
\[\lambda_3<b_3^2<\lambda_2<b_2^2<\lambda_1<b_1^2.\]

By a simple calculation, we can solve for $x_i^2$ in Equation \eqref{eqfc} to obtain 
\begin{equation}\label{changeofco}
	x_i^2=\frac{\prod_{j=1}^3(b_i^2-\lambda_j)}{\prod_{k\neq i}(b_i^2-b_k^2)}.
\end{equation}

Moreover we can see that $(\lambda_i)$ actually define an orthogonal coordinate system in $\mathbb{R}^3$. To see this, we can compute the normal vector $N_i$ to the  hypersurface $C_i$:
\[N_i=(\frac{2x_1}{b_1^2-\lambda_i},\frac{2x_2}{b_2^2-\lambda_i},\frac{2x_3}{b_3^2-\lambda_i}).\]
It can be verified that they are mutually orthogonal.

In ellipsoidal coordinates, the standard Euclidean metric $g$ becomes 
\begin{equation}\label{eq3.1}
	\tilde{g}=\sum_{i=1}^3\frac{
	\prod_{j\neq i}(\lambda_j-\lambda_i)}{4\prod_{j=1}^3(b_j^2-\lambda_i)}\,d\lambda_i^2.
\end{equation}

Given the hyperbola $\mathcal{H}:\left\lbrace\frac{x_1^2}{h_1^2}-\frac{x_3^2}{h_3^2}=1,\ x_2=0 \right\rbrace $ in the plane ${x_2=0}$, our goal is to find a proper coordinate system in $\mathbb{R}^3$ in which \(\mathcal H\) has a simple expression. Thus we need to modify the standard ellipsoidal coordinates. Let 

	\[\begin{cases}
		h_1^2&:=b_1^2-b_2^2\\
		h_3^2&:=b_2^2-b_3^2
	\end{cases},\]

\[\begin{cases}
	\mu_1^2&:=\lambda_1-b_2^2\\
	\mu_2^2&:=b_2^2-\lambda_2\\
	\mu_3^2&:=b_2^2-\lambda_3
\end{cases}.\]

Here, 
\begin{align*}
	&0<\mu_1^2<h_1^2\\
	&0<\mu_2^2<h_3^2<\mu_3^2.
\end{align*}

In this modification, \ref{changeofco} becomes

\begin{equation}\label{eqcoordinate}
	\begin{cases}
		x_1^2 &= \dfrac{(h_1^2 - \mu_1^2)(h_1^2 + \mu_2^2)(h_1^2 + \mu_3^2)}{h_1^2(h_1^2 + h_3^2)}\\
		x_2^2 &= \dfrac{\mu_1^2 \mu_2^2 \mu_3^2}{h_1^2 h_3^2}\\
		x_3^2 &= \dfrac{(h_3^2 + \mu_1^2)(h_3^2 - \mu_2^2)(-h_3^2 + \mu_3^2)}{(h_1^2 + h_3^2)h_3^2}.
	\end{cases}
\end{equation}

$\tilde{g}$ becomes 
\begin{equation}\label{eq3.3}
	\tilde{g} = \frac{ (\mu_1^2 + \mu_2^2)(\mu_1^2 + \mu_3^2) }{ (h_1^2 - \mu_1^2) (h_3^2 + \mu_1^2) } d\mu_1^2 + \frac{ (\mu_1^2 + \mu_2^2)(\mu_2^2 - \mu_3^2) }{ (h_1^2 + \mu_2^2) (\mu_2^2 - h_3^2) } d\mu_2^2 + \frac{ (\mu_1^2 + \mu_3^2)(\mu_3^2 - \mu_2^2) }{ (h_1^2 + \mu_3^2) (\mu_3^2 - h_3^2) } d\mu_3^2.
\end{equation}

\begin{definition}
	We call $\mathbf{x}(\mu_1,\mu_2,\mu_3)$ the \textbf{modified ellipsoidal coordinate}. 
\end{definition}

Equation \eqref{eqcoordinate} implies
\[\mathbf{x}(\mu_1,\mu_2,\mu_3)=\mathbf{x}(-\mu_1,-\mu_2,\mu_3).\]

From the Equation \eqref{eq3.3}, $\mathbf{x}$ actually defines an orthogonal coordinate system on the branched cover of $\mathbb{R}^3$.

We next record two useful properties of the modified ellipsoidal coordinates.
\begin{proposition}\label{prop1}
	In modified ellipsoidal coordinate, 
	\[\mathcal{H}=\left\lbrace\frac{x_1^2}{h_1^2}-\frac{x_3^2}{h_3^2}=1,\ x_2=0 \right\rbrace =\left\lbrace\mu_1=\mu_2=0 \right\rbrace  .\]Moreover, if we let
	\[z=\frac{\mu_3}{2h_1h_3}(\mu_2^2-\mu_1^2+2\sqrt{-1}\mu_1\mu_2),\]
	then $z=\zeta+O(r^2)$, where $\zeta$ is the inverse of the exponential map from the tubular neighborhood to the normal bundle and r is the distance of a point in the tubular neighborhood of $\mathcal{H}$ to $\mathcal{H}$(i.e. $r=|\zeta|$).
\end{proposition}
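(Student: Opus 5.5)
We prove the two assertions separately. The first is an explicit computation in \eqref{eqcoordinate}; the second is a first-order comparison of the metric \eqref{eq3.3} with the flat normal metric along $\mathcal H$.

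\emph{Step 1: $\mathcal H=\{\mu_1=\mu_2=0\}$.} Put $\mu_1=\mu_2=0$ in \eqref{eqcoordinate}. This gives $x_2=0$ and
\[
x_1^2=\frac{h_1^2(h_1^2+\mu_3^2)}{h_1^2+h_3^2},\qquad x_3^2=\frac{h_3^2(\mu_3^2-h_3^2)}{h_1^2+h_3^2},
\]
so $x_1^2/h_1^2-x_3^2/h_3^2=1$. As $\mu_3$ runs over $(h_3,\infty)$ and the signs of $x_1,x_3$ vary, every point of $\mathcal H$ is obtained.

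For the converse, note that $x_2=0$ forces $\mu_1\mu_2=0$, because $\mu_3>h_3>0$.
\begin{itemize}
\item On $\{\mu_1=0\}$, write $Q(\mu_2,\mu_3):=x_1^2/h_1^2-x_3^2/h_3^2$ using \eqref{eqcoordinate}. I will check that $Q-1$ equals a positive multiple of $\mu_2^2$. Hence the slice $\mu_1=0,\ \mu_2\neq0$ is the open region $\{Q>1\}$ of the plane $x_2=0$.
\item Similarly, on $\{\mu_2=0\}$ with $\mu_1\neq0$, I will check that $Q-1$ is a negative multiple of $\mu_1^2$, so this slice is the region $\{Q<1\}$.
\end{itemize}
Therefore a point of $\{x_2=0\}$ lies on $\mathcal H$ exactly when $\mu_1=\mu_2=0$.

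\emph{Step 2: $z$ is conformal to first order in the normal directions.} Set $w:=\mu_2+\sqrt{-1}\,\mu_1$ and $c(\mu_3):=\mu_3/(h_1h_3)$. Then
\[
z=\tfrac{c}{2}w^2 ,
\]
which is invariant under $(\mu_1,\mu_2)\mapsto(-\mu_1,-\mu_2)$ and so is well defined on $\mathbb R^3$. Expanding the coefficients in \eqref{eq3.3} at $\mu_1=\mu_2=0$ gives
\[
\tilde g=c^2|w|^2\,|dw|^2\,\bigl(1+O(|w|^2)\bigr)+g_{33}\,d\mu_3^2,
\]
with $g_{33}$ smooth and positive. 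Since $dz=c\,w\,dw+\tfrac{c'}{2}w^2\,d\mu_3$ and $|w|^2\sim r$, this becomes
\[
\tilde g=|dz|^2\bigl(1+O(r)\bigr)+\bigl(g_{33}+O(r)\bigr)\,d\mu_3^2 .
\]

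\emph{Step 3: comparison with $\zeta$.} Fix $\mu_3$ and consider the surface $S_{\mu_3}=\{\mu_3=\text{const}\}$.
\begin{itemize}
\item Because the coordinates are orthogonal, $\partial_{\mu_3}$ is tangent to $\mathcal H$ along $\mathcal H$. Hence $S_{\mu_3}$ meets $\mathcal H$ orthogonally at the point $p(\mu_3)$.
\item So $S_{\mu_3}$ is tangent to the normal plane $N_{p(\mu_3)}\mathcal H$ and deviates from it by $O(r^2)$.
\item The nearest-point projection of a point with coordinates $(\mu_1,\mu_2,\mu_3)$ onto $\mathcal H$ differs from $p(\mu_3)$ by $O(r^2)$. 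This follows because the $d\mu_3$-direction is orthogonal to $S_{\mu_3}$ up to $O(r)$ by Step 2.
\end{itemize}
On $S_{\mu_3}$ the function $z$ vanishes to first order at $p(\mu_3)$, and its induced metric is $|dz|^2(1+O(r))$. Therefore the differential of $z$ at $p(\mu_3)$, transported to $N_{p(\mu_3)}\mathcal H$, is a complex-linear isometry. We choose the complex structure and unitary trivialization of $N_\Sigma$ in the definition of $\zeta$ to agree with it; this is allowed since $\zeta$ is defined only up to such a choice, and the co-orientation is compatible. Taylor expansion of $z\circ\exp$ on the normal disc, together with the $O(r^2)$ corrections from the previous bullets, then gives $z-\zeta=O(r^2)$.

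The main obstacle is the bookkeeping in Step 3. One must verify that every error term is genuinely $O(r^2)$, not merely $O(r)$, in $z-\zeta$. I would handle this by writing $\exp$ in the modified coordinates to second order, using that the Christoffel symbols in $(z,\mu_3)$ are bounded by Step 2.
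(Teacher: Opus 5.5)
Your Step 1 is the paper's argument. The paper writes $\frac{x_1^2}{h_1^2}-\frac{x_3^2}{h_3^2}-1$ as one polynomial in $\mu_1^2,\mu_2^2,\mu_3^2$. On $\{\mu_1=0\}$, resp.\ $\{\mu_2=0\}$, it reduces to $\mu_2^2\mu_3^2/(h_1^2h_3^2)$, resp.\ $-\mu_1^2\mu_3^2/(h_1^2h_3^2)$, which are exactly the signs you anticipate. For the second assertion the paper argues extrinsically. The functions $F=\frac{x_1^2}{h_1^2}-\frac{x_3^2}{h_3^2}-1$ and $x_2$ are smooth defining functions with $\nabla F\perp\nabla x_2$, so $F/|\nabla F|+\sqrt{-1}\,x_2=\zeta+O(r^2)$. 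It then expands $F$ and $|\nabla F|=2(x_1^2/h_1^4+x_3^2/h_3^4)^{1/2}$ in the $\mu$'s, using only the algebraic identities \eqref{eqcoordinate}. Your intrinsic route through the metric \eqref{eq3.3} is different, and as written it has a gap.

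\textbf{The gap at the vertices.} The failure is at the two vertices $(\pm h_1,0,0)$ of $\mathcal H$, i.e.\ $\mu_1=\mu_2=0$, $\mu_3=h_3$. Your Step 1 parametrization by $\mu_3\in(h_3,\infty)$ already misses them. At these points $g_{33}$ is not smooth and positive. On $\mathcal H$ it equals $\mu_3^4/\bigl((h_1^2+\mu_3^2)(\mu_3^2-h_3^2)\bigr)$, which blows up at $\mu_3=h_3$. Moreover, by \eqref{eqcoordinate}, $\mu_3\ge h_3$ with equality exactly on $\{x_3=0\}$ near the vertices, so $d\mu_3=0$ there. Consequently $(z,\mu_3)$ is not a chart, $\mu_3\mapsto p(\mu_3)$ is not a regular parametrization, and your planned Christoffel-symbol bound in $(z,\mu_3)$ gives nothing at, or uniformly near, these points. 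The paper's expansions, by contrast, are uniform on compact pieces of $\mathcal H$ with $\mu_3\ge h_3$.

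\textbf{The regularity gap.} Your final Taylor step needs $z$ to be $C^2$ in $\mathbf x$, but you only show that $z$ is deck-invariant.

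\textbf{How to fix both.} Write $z=\frac{\mu_3}{2h_1h_3}(\mu_2^2-\mu_1^2)+\sqrt{-1}\,x_2$, where $\mu_2^2-\mu_1^2=2b_2^2-\lambda_1-\lambda_2=|\mathbf x|^2+\lambda_3+b_2^2-b_1^2-b_3^2$ and $\mu_3=\sqrt{b_2^2-\lambda_3}\ge h_3$. Here $\lambda_3$ is a simple root of the cubic near $\mathcal H$, since on $\mathcal H$ one has $\lambda_3\le b_3^2<b_2^2=\lambda_1=\lambda_2$. Hence $z$ is real-analytic near all of $\mathcal H$. Now redo your Step 2 computation for the dual metric: the $\mu_3$-direction enters only through the bounded quantity $g^{33}=1/g_{33}$. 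This gives $|\nabla\operatorname{Re}z|,|\nabla\operatorname{Im}z|=1+O(r)$ and $\langle\nabla\operatorname{Re}z,\nabla\operatorname{Im}z\rangle=O(r)$ uniformly, first off the coordinate-degeneracy set and then everywhere by continuity of $dz$. Since $z|_{\mathcal H}=0$, $dz|_{N_p\mathcal H}$ is then a linear isometry onto $\mathbb C$ at every $p\in\mathcal H$. Taylor's theorem for $v\mapsto z(p+v)$, $v\in N_p\mathcal H$, gives $z=\zeta+O(r^2)$ directly, so the nearest-point-projection bookkeeping in your Step 3 becomes unnecessary.
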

\begin{proof}
	By direct calculation, we obtain 
	\[
	\frac{x_1^2}{h_1^2} - \frac{x_3^2}{h_3^2} - 1 = \frac{ \mu_1^2 \mu_2^2 \mu_3^2 (h_1^2 - h_3^2) - h_1^2 h_3^2 (\mu_1^2 \mu_2^2 + \mu_1^2 \mu_3^2 - \mu_2^2 \mu_3^2) }{ h_1^4 h_3^4 }.
	\]
	
	By the definition, we always obtain $\mu_3>0$. Thus if $x_2=0$, we have $\mu_1=0$ or $\mu_2=0$. When $\mu_1=0$, $\frac{x_1^2}{h_1^2} - \frac{x_3^2}{h_3^2} - 1 =0$ iff $\mu_2=0$. The case $\mu_2=0$ is similar. On the other hand, if $\mu_1=\mu_2=0$, we certainly obtain $\frac{x_1^2}{h_1^2} - \frac{x_3^2}{h_3^2} - 1=x_2=0$. Thus we have proved the first part of our proposition. 
	
	Since the defining functions for $\mathcal{H}$ are $F=\frac{x_1^2}{h_1^2} - \frac{x_3^2}{h_3^2} - 1 $ and $x_2$, we obtain 
	\[\frac{F}{|\nabla F|}+\sqrt{-1}x_2=\zeta+O(r^2).\]
	Since $x_2=\frac{\mu_1\mu_2\mu_3}{h_1h_3}$, it suffices to calculate the local expansion of $\frac{F}{|\nabla F|}$.
	
	A direct calculation, together with the fact that $\mu_1^2+\mu_2^2$ is uniformly equivalent to $r$, gives 
	\begin{align*}
		|\nabla F|&=2(\frac{x_1^2}{h_1^4}+\frac{x_3^2}{h_3^4})^{1/2}\\&=\frac{2\mu_3}{h_1h_3}
		+\frac{h_1^2h_3^2-(h_1^2-h_3^2)\mu_3^2}{h_1^3h_3^3\mu_3}\,(\mu_2^2-\mu_1^2)
		+O\!\left((\mu_1^2+\mu_2^2)^2\right)\\
		&=\frac{2\mu_3}{h_1h_3}+O(r).
	\end{align*}

	Since $F=\frac{\mu_3^2(\mu_2^2-\mu_1^2)}{h_1^2h_3^2}+O(r^2)$, we obtain 
	\[\frac{F}{|\nabla F|}=\frac{\mu_3}{2h_1h_3}(\mu_2^2-\mu_1^2)+O(r^2),\]
	which proves the remaining part of the proposition.
	
\end{proof}

\begin{remark}\label{hyperremark}
	Proposition \ref{prop1} implies that \[z^{1/2}=\frac{\sqrt{\mu_3}}{\sqrt{2h_1h_3}}(\mu_2+\sqrt{-1}\mu_1).\]
	Thus, our goal is to find a harmonic function \(f\) satisfying
\[
f(\mu_1,\mu_2,\mu_3)=
-f(-\mu_1,-\mu_2,\mu_3)
\]
and admitting the expansion
	\[f=\operatorname{Re}\bigg(B(\mu_3)(\mu_2+\sqrt{-1}\mu_1)^{3}\bigg)+O(|z|^{5/2}),\]
    where $B(\mu_3)$ is a complex-valued function. Then it naturally descends to a nondegenerate $\mathbb{Z}_2$ harmonic function on $\mathbb{R}^3$ whose branching set is $\mathcal{H}$. As in \cite{yan2025constructionnondegeneratemathbbz2harmonicfunctions}, we only need to derive a formula for the nondegenerate $\mathbb{Z}_2$ harmonic function on an open dense subset such that the modified ellipsoidal coordinates will not degenerate and then extend it to the whole space.
\end{remark}

First we need to express the Euclidean Laplacian in modified ellipsoidal coordinates. Let $S(\mu)=(h_1^2+\mu)(h_3^2-\mu)$.

By \eqref{eq3.3}, we obtain 
\begin{align*}
	\Delta_{\tilde{g}}  &= div_{\tilde{g}}(\nabla_{\tilde{g}} )\\
	&=\mathcal{L}_1+\mathcal{L}_2+\mathcal{L}_3
,\end{align*}
where
\begin{align*}
	\mathcal{L}_1&:=\frac{1}{(\mu_1^2+\mu_2^2)(\mu_1^2+\mu_3^2)}(S(-\mu_1^2)\partial_{\mu_1}^2-\mu_1S'(-\mu_1^2)\partial_{\mu_1}),\\
	\mathcal{L}_2&:=\frac{1}{(\mu_1^2+\mu_2^2)(\mu_3^2-\mu_2^2)}(S(\mu_2^2)\partial_{\mu_2}^2+\mu_2S'(\mu_2^2)\partial_{\mu_2}),\\
	\mathcal{L}_3&:=\frac{1}{(\mu_1^2+\mu_3^2)(\mu_3^2-\mu_2^2)}(-S(\mu_3^2)\partial_{\mu_3}^2-\mu_3S'(\mu_3^2)\partial_{\mu_3}).
\end{align*}

If we separate the variables:
\[f(\mu_1,\mu_2,\mu_3)=f_1(\mu_1)\cdot f_2(\mu_2)\cdot f_3(\mu_3).\]

Then $\Delta_{\tilde{g}} f=0$ if $\mathcal{L}_if_i=0,\ i=1,2,3.$

\begin{remark}
	It suffices to solve the following equation:
	\begin{equation}\label{eqrem3.4}
		(S(\mu^2)\partial_{\mu}^2+\mu S'(\mu^2)\partial_{\mu}-Q(-\mu^2))f(\mu)=0,
	\end{equation}
	where $Q(x)=Q_0+Q_1x,\ Q_0,Q_1\in \mathbb{R}$.
\end{remark}

\begin{proof}
	Since for any polynomial $Q(x)=\sum_{k=0}^{n-2}Q_kx^k$ over $\mathbb{R}$, we obtain the algebraic identity
	\[\sum_{j=1}^n\frac{Q(x_j)}{\prod_{k\neq j}(x_j-x_k)}=0.\]
	
	Let $n=3$ and $x_1=\mu_1^2,\ x_2=-\mu_2^2,\ x_3=-\mu_3^2,$ we obtain 
	\[\frac{Q(\mu_1^2)}{(\mu_1^2+\mu_2^2)(\mu_1^2+\mu_3^2)}-\frac{Q(-\mu_2^2)}{(\mu_1^2+\mu_2^2)(\mu_3^2-\mu_2^2)}+\frac{Q(-\mu_3^2)}{(\mu_1^2+\mu_3^2)(\mu_3^2-\mu_2^2)}=0.\]
	
	Thus if we let
	\begin{align*}
		\widetilde{\mathcal{L}_1}&:=\frac{1}{(\mu_1^2+\mu_2^2)(\mu_1^2+\mu_3^2)}(S(-\mu_1^2)\partial_{\mu_1}^2-\mu_1S'(-\mu_1^2)\partial_{\mu_1}+Q(\mu_1^2)),\\
		\widetilde{\mathcal{L}_2}&:=\frac{1}{(\mu_1^2+\mu_2^2)(\mu_3^2-\mu_2^2)}(S(\mu_2^2)\partial_{\mu_2}^2+\mu_2S'(\mu_2^2)\partial_{\mu_2}-Q(-\mu_2^2)),\\
		\widetilde{\mathcal{L}_3}&:=\frac{1}{(\mu_1^2+\mu_3^2)(\mu_3^2-\mu_2^2)}(-S(\mu_3^2)\partial_{\mu_3}^2-\mu_3S'(\mu_3^2)\partial_{\mu_3}+Q(-\mu_3^2)),
	\end{align*}
We obtain $$\Delta_{\tilde{g}}=\mathcal{L}_1+\mathcal{L}_2+\mathcal{L}_3=\widetilde{\mathcal{L}_1}+\widetilde{\mathcal{L}_2}+\widetilde{\mathcal{L}_3}.$$
Then if we assume $f(\mu_1,\mu_2,\mu_3)=f_1(\mu_1)\cdot f_2(\mu_2)\cdot f_3(\mu_3)$, then  $\Delta_{\tilde{g}} f=0$ if  $\widetilde{\mathcal{L}_i}f_i=0$ for $i=1,2,3$. Moreover, $\widetilde{\mathcal{L}_i}f_i=0\ i=2,3$ if and only if  $(S(\mu_i^2)\partial_{\mu_i}^2+\mu_i S'(\mu_i^2)\partial_{\mu_i}-Q(-\mu_i^2))f_i(\mu_i)=0,\ i=2,3$. As for $i=1$, we can apply the substitution \(\mu_1=i\mu\) to (\ref{eqrem3.4}).
\end{proof}

We want our construction to be as simple as possible, so we hope to find a polynomial solution to the equation (\ref{eqrem3.4}). This is also why we need to append an additional polynomial term $Q(-\mu^2)$ to the original equation. (Readers can easily verify that without this modification, no polynomial solution can be obtained.)

Suppose we obtain a solution $f(\mu)=\mu^2-p$. Substituting this ansatz into equation (\ref{eqrem3.4}), we obtain 
\begin{align*}
	Q_1&=6;\\
	Q_0+6p&=4(h_3^2-h_1^2);\\
	p\cdot Q_0&=-2h_1^2h_3^2.
\end{align*}

Moreover if we let $2S(x)+2xS'(x)=\sum_{k=0}^2 S_kx^k.$ We can see directly from equation (\ref{eqrem3.4}) that if $p$ solves $\sum_{k=0}^2 S_kx^k=0$, then we can solve $Q_k,\ k=0,1.$

Thus we obtain 
\[p_{1,2}=\frac{h_3^2-h_1^2\pm \sqrt{h_1^4+h_3^4+h_1^2h_3^2}}{3}.\]
For convenience of the following discussion, we denote $\frac{h_3^2-h_1^2+\sqrt{h_1^4+h_3^4+h_1^2h_3^2}}{3}$ by $p_1$.

Thus, from the above analysis, we obtain 
\[P_i=(\mu_1^2+p_i)(\mu_2^2-p_i)(\mu_3^2-p_i),\ i=1,2\]
are harmonic polynomials. Moreover, we have the following crucial observation:
\begin{lemma}
	Let $P(\mu)$ be a non-vanishing polynomial solution of degree $k$  satisfying Equation (\ref{eqrem3.4}), then 
	\[R(\mu):=P(\mu)\int_{0}^{\mu}\frac{dx}{P^2(x)\sqrt{S(x^2)}}\ (\ 0< \mu^2 < h_3^2)\]
	is another solution to Equation (\ref{eqrem3.4}).
\end{lemma}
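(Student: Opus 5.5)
This is reduction of order for a second-order linear ODE, so the plan is to verify it through the Wronskian (Abel's formula). First we put equation \eqref{eqrem3.4} in normal form. On $0<\mu^2<h_3^2$ we have $S(\mu^2)=(h_1^2+\mu^2)(h_3^2-\mu^2)>0$, so dividing by $S(\mu^2)$ gives
\[
f''+\frac{\mu S'(\mu^2)}{S(\mu^2)}f'-\frac{Q(-\mu^2)}{S(\mu^2)}f=0 .
\]
The key observation is that the first-order coefficient is a logarithmic derivative:
\[
\frac{d}{d\mu}\log\sqrt{S(\mu^2)}=\frac{\mu S'(\mu^2)}{S(\mu^2)}.
\]
Hence, by Abel's identity, the Wronskian of any two solutions is a constant multiple of $S(\mu^2)^{-1/2}$.

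Rather than appeal to the general theorem, we verify the claim directly. Write $R=P\,I$ with $I(\mu)=\int_0^\mu \frac{dx}{P^2\sqrt{S(x^2)}}$, so that $I'=P^{-2}S(\mu^2)^{-1/2}$. Then
\[
R'=P'I+P^{-1}S^{-1/2},
\]
and
\[
R''=P''I+P'P^{-2}S^{-1/2}-P'P^{-2}S^{-1/2}-P^{-1}\,\mu S'(\mu^2)\,S^{-3/2}.
\]
The two middle terms cancel. Substituting into $S R''+\mu S' R'-QR$, the terms carrying the factor $I$ give $I\,(SP''+\mu S'P'-QP)=0$, since $P$ is a solution. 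The remaining terms are
\[
-P^{-1}\mu S' S^{-1/2}+\mu S'P^{-1}S^{-1/2}=0 .
\]
So $R$ solves \eqref{eqrem3.4}.

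The main obstacle is making sure the integral makes sense. On $0<\mu^2<h_3^2$ we know $S(x^2)>0$, so the only possible singularities come from zeros of $P$ on the path of integration. We interpret the hypothesis ``non-vanishing'' as meaning that $P$ has no zeros on the interval $[0,\mu]$ under consideration. In the application $P=\mu^2-p$ with $p$ chosen suitably, for example $p=-p_2>0$ or $\mu^2<p_1$; this is exactly the region $\Omega=\{\mu_2^2<p_1\}$.

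Under this interpretation the integrand is smooth on the interval, so differentiation under the integral sign is justified. The computation then holds pointwise on the stated range, including negative $\mu$ with $\int_0^\mu$ read in the oriented sense. Finally, $R$ is linearly independent of $P$, since $W(P,R)=S^{-1/2}\neq0$; this is not required for the claim, but it is what makes $R$ useful later.
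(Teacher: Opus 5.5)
Your proof is correct and is essentially the paper's argument. The paper likewise computes $R'$ and $R''$ directly, uses the cancellation of the $P'P^{-2}S^{-1/2}$ terms, substitutes into \eqref{eqrem3.4} using that $P$ is a solution, and notes linear independence from the nonzero Wronskian. Your reading of ``non-vanishing'' as $P\neq0$ on $[0,\mu]$ is exactly what the formula needs, since the paper only checks $S(x^2)>0$; however, your example should read $P=\mu^2-p_2$ with $-p_2>0$, not $p=-p_2$.
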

\begin{proof}
	By direct calculation, we obtain 
	\begin{align*}
		\partial_{\mu}R(\mu)&=\partial_{\mu}P(\mu)\int_{0}^{\mu}\frac{dx}{P^2(x)\sqrt{S(x^2)}}+\frac{1}{P(\mu)\sqrt{S(\mu^2)}};\\
		\partial^2_{\mu}R(\mu)&=\partial^2_{\mu}P(\mu)\int_{0}^{\mu}\frac{dx}{P^2(x)\sqrt{S(x^2)}}-\frac{\mu S'(\mu^2)}{P(\mu)(S(\mu^2))^{3/2}}.
	\end{align*}
First $S(x^2)=(h_1^2+x^2)(h_3^2-x^2)>0$ when $0\leq \mu^2 < h_3^2$. Thus $R(\mu)$ is well-defined. Since $P(\mu)$ is a solution, we obtain that \[S(\mu^2)\partial_{\mu}^2R(\mu)+\mu S'(\mu^2)\partial_{\mu}R(\mu)-Q(-\mu^2)R(\mu)=0.\]

The two solutions are linearly independent, since their Wronskian is nonzero.
\end{proof}

  Consider 
\begin{align*}\label{blocks}
	f_0&=\int_{0}^{\mu_2}\frac{dx}{\sqrt{S(x^2)}},\\
	f_{2,i}&=((\mu_2^2-p_i)\int_{0}^{\mu_2}\frac{dx}{(x^2-p_i)^2\sqrt{S(x^2)}})(\mu_1^2+p_i)(\mu_3^2-p_i)\ (i=1,2).
\end{align*}
where $p_i(i=1,2)$ satisfies the equation $2S(x)+2xS'(x)=0$. 

By the previous analysis, \(f_0\) and \(f_{2,i}\), \(i=1,2\), are harmonic functions, which are odd with respect to $(\mu_1,\mu_2)$.

\begin{remark}
     We need to notice that $$\int_{0}^{\mu_2}\frac{dx}{(x^2-p_i)^2\sqrt{S(x^2)}}$$ is well defined if  $\mu_2^2-p_1<0.$ Since $0<p_1<h_3^2$, our construction is not automatically defined in the whole space. Let $R_{p_i}(\mu_2)=(\mu_2^2-p_i)\int_{0}^{\mu_2}\frac{dx}{(x^2-p_i)^2\sqrt{S(x^2)}}$. By direct computation, we can obtain that 
\[R_{p_1}(\mu_2)\to-\frac{1}{2\sqrt{p_1S(p_1)}}\ \text{as}\ \mu_2\to \sqrt{p_1}^-.\]

Hence, the singularity at $\mu_2^2=p_1$
 is removable and $f_{2,1}$ may admit a smooth continuation across this hypersurface. However, this does not immediately yield a global $\mathbb{Z}_2$ harmonic function on $\mathbb{R}^3$ whose branching set is precisely $\mathcal H$. Indeed, the modified ellipsoidal coordinates degenerate at $\mu_2^2=h_3^2$, and the separated solution generally contains a nonzero term proportional to $\sqrt{h_3^2-\mu_2^2}$. A global extension would therefore require a careful analysis of the coordinate transitions and the resulting monodromy. Actually, the smooth continuation of $f_{2,1}$
 may introduce additional branching sets, and therefore it is relatively difficult to give an explicit analysis. We do not pursue this issue here. Thus, we still regard our construction as a local solution and restrict ourselves to the region where $\mu_2^2<p_1$.
\end{remark}

\begin{theorem}
     For each $h_1,h_3>0$, there exists a function $f_h$ which is defined in $\Omega=\left\lbrace\mu_2^2<p_1 \right\rbrace$, satisfying the condition in Remark \ref{hyperremark}. Moreover, $f_h$ can be taken as 
    \begin{align*}
        f_h=af_0+b_1f_{2,1}+b_2f_{2,2},
    \end{align*}
    where $b_1=-b_2\neq 0$, $a=b_1(p_2-p_1)$. In other words, there exists a local nondegenerate $\mathbb{Z}_2$ harmonic function on $\mathbb{R}^3$ whose branching set is a hyperbola $\mathcal{H}$.
\end{theorem}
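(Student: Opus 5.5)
Since $f_0$, $f_{2,1}$, $f_{2,2}$ are already known to be harmonic on $\Omega$ and odd under $(\mu_1,\mu_2)\mapsto(-\mu_1,-\mu_2)$, any linear combination $f_h=af_0+b_1f_{2,1}+b_2f_{2,2}$ is again harmonic and odd. By Remark \ref{hyperremark}, what is left is to choose $a,b_1,b_2$ so that the expansion of $f_h$ near $\mathcal H=\{\mu_1=\mu_2=0\}$ has no $z^{1/2}$ term, and so that the $z^{3/2}$ coefficient $B(\mu_3)$ is nowhere zero. The plan is therefore to Taylor expand each building block in $(\mu_1,\mu_2)$ near $0$, keeping $\mu_3$ as a parameter, up to third order.

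\textbf{Expanding the blocks.} We have $S(x^2)^{-1/2}=(h_1h_3)^{-1}\bigl(1+\tfrac{h_1^2-h_3^2}{2h_1^2h_3^2}x^2+O(x^4)\bigr)$. Integrating gives
\[
f_0=\frac{1}{h_1h_3}\Bigl(\mu_2+c\,\mu_2^3\Bigr)+O(\mu_2^5),\qquad c=\frac{h_1^2-h_3^2}{6h_1^2h_3^2}.
\]
For $R_{p_i}(\mu_2)=(\mu_2^2-p_i)\int_0^{\mu_2}(x^2-p_i)^{-2}S(x^2)^{-1/2}dx$ we write $(x^2-p_i)^{-2}=p_i^{-2}(1+2x^2/p_i+\dots)$. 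This yields
\[
R_{p_i}=-\frac{1}{p_ih_1h_3}\,\mu_2+d_i\,\mu_2^3+O(\mu_2^5)
\]
for an explicit constant $d_i$. Multiplying by $(\mu_1^2+p_i)(\mu_3^2-p_i)$, the linear part of $f_{2,i}$ is $-\frac{\mu_3^2-p_i}{h_1h_3}\mu_2$. The cubic part is a combination of $\mu_2^3$ and $\mu_1^2\mu_2$ whose coefficients are affine in $\mu_3^2$.

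\textbf{Killing the leading term.} The $z^{1/2}$ term is $\operatorname{Re}(A\zeta^{1/2})$, and in these coordinates it corresponds to the linear term in $\mu_2$ (and in $\mu_1$, which does not occur). Its total coefficient is
\[
\frac{1}{h_1h_3}\bigl(a-b_1(\mu_3^2-p_1)-b_2(\mu_3^2-p_2)\bigr).
\]
Requiring this to vanish for every $\mu_3$ forces $b_1+b_2=0$ and $a=-b_1p_1-b_2p_2=b_1(p_2-p_1)$, which are exactly the relations in the statement. The normalization $b_1\neq0$ is then free.

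\textbf{Identifying $B$; the main obstacle.} The cubic part has to equal $\operatorname{Re}\bigl(B(\mu_3)(\mu_2+i\mu_1)^3\bigr)$ modulo terms that become $O(|z|^{5/2})$ once the correction $-\tfrac12\operatorname{Re}(\bar\mu\zeta)\operatorname{Re}(A\zeta^{1/2})$ drops out. Since $A=0$, that correction term vanishes. Harmonicity together with oddness then forces the cubic part to be a harmonic cubic in the conformal variable. There is a subtlety here: $z=\zeta+O(r^2)$ only, and the metric in $(\mu_1,\mu_2)$ at $\mu_1=\mu_2=0$ is conformal to the flat one with a $\mu_3$-dependent factor. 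So I would argue that the cubic part, being odd and of homogeneous degree $3$ in $(\mu_1,\mu_2)$, is the leading $\zeta^{3/2}$ term in Donaldson's expansion, plus possibly a $\bar\zeta^{1/2}\zeta$-type piece that harmonicity rules out at the critical level.

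Concretely, I would read off the coefficient of $\mu_2^3$ as $\operatorname{Re}B$, and the coefficient of $\mu_1^3$ (or equivalently of $\mu_1^2\mu_2$) to determine $\operatorname{Im}B$. Then I would check that $B(\mu_3)\neq0$ for all $\mu_3^2>h_3^2$. The $\mu_1^2\mu_2$ coefficient is $b_1\bigl(-(\mu_3^2-p_1)/(p_1h_1h_3)+(\mu_3^2-p_2)/(p_2h_1h_3)\bigr)$. Its vanishing would require an exact cancellation, and I expect this cancellation to fail using $p_1p_2=-\tfrac{h_1^2h_3^2}{3}$, $p_1>0>p_2$ and $\mu_3^2>h_3^2>p_1$. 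Verifying this nonvanishing uniformly in $\mu_3$ is the step I expect to need the most careful algebra. If it degenerates at some $\mu_3$, I would instead show that the $\mu_2^3$ coefficient is nonzero at that point.
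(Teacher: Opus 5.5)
Your proposal follows the paper's proof: expand $f_0,f_{2,i}$ to third order in $(\mu_1,\mu_2)$, kill the $\mu_2$-linear term to force $b_1+b_2=0$ and $a=b_1(p_2-p_1)$, and read off the cubic part, which the paper computes as $\frac{b_1(p_2-p_1)\mu_3^2}{3h_1h_3p_1p_2}(\mu_2^3-3\mu_1^2\mu_2)=\operatorname{Re}\bigl(B(\mu_3)(\mu_2+\sqrt{-1}\mu_1)^3\bigr)$. The step you flagged as hard is immediate, since your $\mu_1^2\mu_2$ coefficient simplifies to $b_1\mu_3^2(p_1-p_2)/(p_1p_2h_1h_3)\neq0$; however, this coefficient equals $-3\operatorname{Re}B$ and does not determine $\operatorname{Im}B$ as you claim (every block is even in $\mu_1$, so $B$ is real and the $\mu_2^3$ and $\mu_1^2\mu_2$ coefficients vanish together), so your fallback plan is unnecessary.
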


\begin{proof}
    We first calculate Taylor's expansion of $f_0$ and $f_{2,i}$ at $(\mu_1,\mu_2)=(0,0)$. By direct calculation, we obtain 
\begin{align*}
	f_0(\mu_2)&=\frac{1}{h_1h_3}\mu_2-\frac{h_3^2-h_1^2}{6h_1^3h_3^3}\mu_2^3+O(|z|^{5/2})\\	f_{2,i}(\mu_1,\mu_2,\mu_3)&=\frac{\mu_3^2-p_i}{h_1h_3}\bigg(-\mu_2-\frac{1}{p_i}\mu_1^2\mu_2+(\frac{h_3^2-h_1^2}{6h_1^2h_3^2}+\frac{1}{3p_i})\mu_2^3\bigg)+O(|z|^{5/2}).
\end{align*}

We now use $f_0$ and $f_{2,i}(i=1,2)$ to construct a nondegenerate $Z_2$-harmonic function as follows:

Take $b_1=-b_2\neq 0$, $a=b_1(p_2-p_1)$, then by direct calculation, we obtain 
\begin{align*}
f_h=af_0+b_1f_{2,1}+b_2f_{2,2}&=\dfrac{b_1(p_2-p_1)\mu_3^2}{3h_1h_3p_1p_2}(\mu_2^3-3\mu_1^2\mu_2)+O(|z|^{5/2})\\
&=Re\bigg(\dfrac{2^{3/2}b_1(p_2-p_1)\sqrt{h_1h_3\mu_3}}{3p_1p_2}\cdot z^{3/2}\bigg)+O(|z|^{5/2}).
\end{align*}

Thus, we finish the proof.
\end{proof}

\subsection{Global nonexistence}

\subsubsection{A singular model in $\mathbb{R}^3$}
We shall first recall some results about a singular model in $\mathbb{R}^3$ first established by Taubes and Wu \cite{TaubesWu2020} and further developed by Siqi He and Jiahuang Chen \cite{chen2024existencerigiditycriticalz2}.

We shall recall the definition of $\mathbb{Z}_2$ harmonic functions in the situation that $\Sigma$ is not smooth.

\begin{definition}[Singular Branching Set]
    Let \(\Sigma\) be a closed subset of an \(n\)-dimensional smooth oriented Riemannian manifold \(M\) with
$
\dim_H\Sigma\leq n-2.
$ Let $P$ be a principal $\mathbb{Z}_2$ bundle over $M\setminus\Sigma$ and let $\mathcal{I}$ be the associated real line bundle. Then we say $f$ is a $\mathbb{Z}_2$ harmonic function on $M\setminus\Sigma$ if  $f\in \Gamma(M\setminus\Sigma,\mathcal{I})$ satisfies
    \begin{enumerate}
            \item $|f|,|df|\in L^2_{loc}$.
            \item $\Delta_g f=0$ on $M\setminus\Sigma.$
        \end{enumerate}

        Moreover, we can define $\mathbb{Z}_2$ harmonic 1-forms similarly.
\end{definition}
We also need to extend the definition of  critical $\mathbb{Z}_2$ harmonic functions and nondegenerate $\mathbb{Z}_2$ harmonic functions.
\begin{definition}\label{defsingular}\
    Given a $\mathbb{Z}_2$ harmonic function $(f,\Sigma,\mathcal{I})$. 
    \begin{enumerate}
        \item We say $f$ is \textbf{critical} if $|f|,|df|$ can extend over $\Sigma$ to define H{\"o}lder continuous functions on $M$ that vanish at the points in $\Sigma$. 
        \item Suppose further that $\Sigma$ has $C^1$ regularity outside the singular parts and the singular parts of $\Sigma$ have Hausdorff dimension at most $n-3$. Then we say $f$ is \textbf{nondegenerate} if $f$ is critical and the  leading coefficient $B\zeta^{3/2}$ of $f$ near those $C^1$ points is nowhere vanishing as in the smooth cases.
    \end{enumerate}
\end{definition}

The first well-known example with singular branching set was constructed by Taubes and Wu \cite{TaubesWu2020}, where the branching set consists of four rays emanating from the origin through the vertices of an inscribed regular tetrahedron on the unit sphere. 

\begin{theorem}[\cite{TaubesWu2020}]\label{Taueg}
     There exists a \textbf{critical} $\mathbb{Z}_2$ harmonic function in $\mathbb{R}^3$ with $\Sigma$ being 4 rays from the origin through the vertices on the unit sphere of an inscribed regular tetrahedron.
\end{theorem}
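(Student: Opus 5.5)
The plan is to look for a homogeneous solution $f=r^{\alpha}g(\omega)$ with $\alpha>0$, where $r=|x|$, $\omega\in S^2$, and $g$ is a $\mathbb Z_2$ eigenfunction on the round sphere. Its branching set is the four points $P=\{p_1,\dots,p_4\}$ at which the rays meet $S^2$. Since $\Delta_{\mathbb R^3}(r^\alpha g)=r^{\alpha-2}\big(\alpha(\alpha+1)g+\Delta_{S^2}g\big)$, it suffices to find $g\in\Gamma(S^2\setminus P,\mathcal I)$ with $-\Delta_{S^2}g=\lambda g$. Here $\lambda=\alpha(\alpha+1)$, and $g$ must be \emph{critical} at each $p_k$, meaning its $\zeta^{1/2}$ coefficient vanishes. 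The flat line bundle $\mathcal I$ on $S^2\setminus P$ is the one with monodromy $-1$ around each $p_k$; it exists because there is an even number of points. Equivalently, one works on the branched double cover $T\to S^2$ branched at $P$. This cover is a torus, and functions on it that are odd under the deck involution $\sigma$ correspond to sections of $\mathcal I$. The same description applies to $\mathbb R^3$ minus the rays, as the cone over $T$.

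\emph{Step 1: symmetry.} Let $G\cong A_4$ be the rotation group of the tetrahedron. It preserves $P$, so it lifts to $T$ as an extension $\widetilde G$ of $G$ by $\langle\sigma\rangle$. I would fix a character $\chi$ of $\widetilde G$ with $\chi(\sigma)=-1$ and work in the $\chi$-isotypic subspace
\[
H_\chi=\{u\in W^{1,2}(T):u\circ\gamma=\chi(\gamma)u\ \text{for all }\gamma\in\widetilde G\}.
\]
Choose a vertex $p$ and a lift $\tilde\rho$ of the order-three rotation $\rho$ fixing $p$. Near $p$ the expansion of $u$ is $\operatorname{Re}\big(A\zeta^{1/2}+B\zeta^{3/2}+C\zeta^{5/2}+\dots\big)$, and $\tilde\rho$ multiplies $\zeta^{k/2}$ by $e^{2\pi i k/6}$ up to the chosen lift. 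I would pick $\chi$, together with the lift, so that $\chi(\tilde\rho)$ is incompatible with the $k=1$ mode but compatible with some odd $k\ge3$. Then equivariance forces $A=0$ at $p$, and by transitivity of $G$ on $P$, at every vertex. The real content of this step is bookkeeping of the lift of $A_4$ to $\mathcal I$ and of which real characters (signs) are available. This is the step I expect to be the main obstacle, because one must check that a suitable $\chi$ actually exists for the binary-type extension. If invariance alone fails, I would fall back on allowing $u\circ\tilde\rho=-u$ composed with the deck transformation.

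\emph{Step 2: existence of the eigenfunction.} $H_\chi$ is nonzero, for example by averaging a suitable bump function under $\widetilde G$, and it is closed in $W^{1,2}(T)$. Minimizing the Rayleigh quotient $\int|du|^2/\int u^2$ over $H_\chi$, with compact embedding $W^{1,2}\hookrightarrow L^2$ on the compact torus, gives a minimizer $g\neq0$ with eigenvalue $\lambda>0$. Positivity of $\lambda$ holds because odd functions cannot be constant. By the symmetric criticality principle, $g$ is a weak eigenfunction on all of $T$, hence smooth away from $P$. Near $P$ it has Donaldson's expansion in $\zeta^{1/2}$, with $A=0$ by Step 1.

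\emph{Step 3: conclusion.} Set $\alpha=\tfrac12\big(-1+\sqrt{1+4\lambda}\big)>0$ and $f=r^\alpha g$. Then $f$ is harmonic off the rays. It lies in $L^2_{loc}$ together with $df$ near the origin, since $|df|\lesssim r^{\alpha-1}$ is square integrable in three dimensions. Near each ray away from $0$, $A=0$ gives $|f|=O(\operatorname{dist}^{3/2})$ and $|df|=O(\operatorname{dist}^{1/2})$. At the origin, $|f|,|df|\lesssim r^{\alpha}, r^{\alpha-1}$, and Hölder continuity with vanishing at $0$ requires $\alpha>1$. To get this I would check that $\lambda>2$ in $H_\chi$: the span of the coordinate-type modes corresponding to $\lambda\le2$ is excluded by the symmetry constraints, or else I would pass to the next eigenvalue in $H_\chi$. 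These estimates give criticality in the sense of Definition~\ref{defsingular}.
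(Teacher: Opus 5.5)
Your route is the one the paper attributes to Taubes--Wu. You reduce to a $\mathbb Z_2$ eigensection on $S^2$ via $f=r^\alpha g$, produce $g$ by minimizing the Rayleigh quotient in a tetrahedrally symmetric class, and use the $\mathbb Z_3$ stabilizer of each vertex to kill the $\zeta^{1/2}$ coefficient. Steps 2 and 3 are sound. In Step 3 you need not estimate the first eigenvalue: every eigenfunction in the infinite-dimensional space $H_\chi$ inherits $A=0$, and the spectrum there is unbounded, so an eigenvalue $\lambda>2$ (hence $\alpha>1$) is always available.

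\emph{The gap is Step 1.} It carries the whole criticality argument, and you leave it open. Your fallback does not help, because the issue is not which lift or sign you choose.

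\emph{Why the fallback fails.} Since $\sigma$ is central in $\widetilde G$ (a conjugate $\sigma\tilde g\sigma^{-1}$ is again a lift of $g$, and it cannot equal $\sigma\tilde g$), a sign character with $\chi(\sigma)=-1$ exists exactly when $\widetilde G\cong G\times\langle\sigma\rangle$. If $\widetilde G$ were the binary tetrahedral group $SL(2,3)$, then $\sigma=-1$ would lie in the commutator subgroup $Q_8$. Every real character would then be trivial on $\sigma$, and no choice of lifts or signs would give a nonzero odd equivariant class.

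\emph{The missing verification.}
\begin{enumerate}
\item Take a half-turn $v$ about an axis through edge midpoints. It fixes a point $m\notin P$. Its lift $\tilde v$ fixing a preimage $\tilde m$ satisfies $\tilde v^2\in\{1,\sigma\}$ and $\tilde v^2(\tilde m)=\tilde m$. Since $\sigma$ has no fixed points off the branch locus, $\tilde v^2=1$, so $\widetilde G$ contains non-central involutions.
\item The only nonsplit central extension of $A_4$ by $\mathbb Z_2$ is $SL(2,3)$ (because $H^2(A_4;\mathbb Z_2)\cong\mathbb Z_2$), and its unique involution is central. Hence $\widetilde G\cong A_4\times\langle\sigma\rangle$. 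In the equianharmonic elliptic-curve model, $\widetilde G$ is the $2$-torsion translations extended by the sixth roots of unity.
\item Because $\operatorname{Hom}(A_4,\{\pm1\})$ is trivial, the projection onto $\langle\sigma\rangle$ is the unique admissible $\chi$.
\item The element of $\ker\chi$ over the vertex rotation $\rho$ has order $3$ and fixes $\tilde p$. It therefore acts on $w=\zeta^{1/2}$ by a scalar $\theta$ with $\theta^2=e^{2\pi i/3}$ and $\theta^3=1$, i.e.\ $\theta=e^{4\pi i/3}$.
\item Since this action is linear in $w$, invariance applies degree by degree. It forces $A=0$ and allows $\operatorname{Re}(Bw^3)$.
\end{enumerate}
With this inserted, your argument is complete.
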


While Taubes and Wu initially showed their example to be critical, Siqi He and Jiahuang Chen \cite{chen2024existencerigiditycriticalz2} subsequently proved that this construction is, in fact, nondegenerate.

\begin{theorem}
    The critical $\mathbb{Z}_2$ harmonic function constructed by Taubes and Wu above is \textbf{nondegenerate}.
\end{theorem}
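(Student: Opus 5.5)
We reduce the statement to a problem on the round $2$-sphere, use the tetrahedral symmetry to reduce to a single branch point, and then rule out vanishing of the leading coefficient by a nodal-domain count on the branched double cover.

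\textbf{Reduction to the sphere.} The Taubes--Wu function of Theorem~\ref{Taueg} is homogeneous: $f=r^{3/2}u(\omega)$, where $u$ is a $\mathbb Z_2$ eigenfunction on $S^2$ branched at the four tetrahedral vertices $q_1,\dots,q_4$. The eigenvalue is $\tfrac32\cdot\tfrac52=\tfrac{15}{4}$. Away from the origin the branching set is a union of four smooth rays, so Definition~\ref{defsingular} applies there, and the expansion \eqref{eq:intro-expansion} holds along each ray with $A\equiv0$ because $f$ is critical. Since $f$ and $\zeta^{3/2}$ are both homogeneous of degree $3/2$, the coefficient $B$ along the ray through $q_j$ is constant in a dilation-invariant trivialization. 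It equals the leading coefficient $b_j$ in the expansion $u=\operatorname{Re}(b_j w^{3/2})+O(|w|^{5/2})$ of $u$ at $q_j$, where $w$ is a local complex coordinate on $S^2$. So nondegeneracy of $f$ is equivalent to $b_j\neq0$ for $j=1,\dots,4$. The Taubes--Wu construction is equivariant under the tetrahedral rotation group, which acts transitively on the $q_j$ (possibly up to the sign of $u$), so $|b_1|=\dots=|b_4|$. It therefore suffices to rule out $b_1=\dots=b_4=0$.

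\textbf{Contradiction via nodal counting.} Suppose all $b_j=0$. Odd $\mathbb Z_2$ expansions contain only half-integer powers, so then $u=O(|w|^{5/2})$ at each $q_j$. Pass to the branched double cover $T\to S^2$ branched at the four points; $T$ is a torus with the pulled-back conical metric. In the lifted coordinate $\xi=w^{1/2}$ the lift $\tilde u$ is odd under the deck involution, satisfies $\Delta\tilde u=-\tfrac{15}{4}\tilde u$ on $T$ (with respect to the conical metric, which is smooth off the four points), and vanishes to order at least $5$ in $\xi$ at each preimage $\tilde q_j$. Its nodal set is therefore a graph with vertices of degree at least $10$ at the $\tilde q_j$. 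Away from these points we have an honest smooth eigenfunction, so the remaining nodal vertices have even degree at least $4$. Writing $2k_v$ for the degree of a nodal vertex $v$, Euler's formula on the torus gives $V-E+F\ge 0$ with $E=\sum_v k_v$, hence
\[
F\ \ge\ \sum_v (k_v-1)\ \ge\ 4\cdot 4=16
\]
nodal domains.

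\textbf{Spectral comparison.} Courant's theorem, applied on $T$ to the Laplacian of the conical metric, bounds the number of nodal domains by the index of $\tfrac{15}{4}$ in the spectrum of $T$. Even eigenfunctions on $T$ are lifts of sphere eigenfunctions (eigenvalues $0,2,2,2,6,\dots$), and odd eigenfunctions are lifts of $\mathbb Z_2$ eigenfunctions. Taubes--Wu obtain $u$ variationally as a lowest equivariant odd mode. So below and at $\tfrac{15}{4}$ we expect only the four even eigenfunctions with eigenvalues $0,2,2,2$ and a small number of odd ones. This would bound the index by a number well under $16$, a contradiction.

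\textbf{Main obstacle.} I expect the hardest step to be this spectral comparison. One must (i) justify Courant's theorem and the local structure of the nodal set, including the $\xi^5$ vertex picture, for the conical metric on $T$; this uses only the Friedrichs extension and unique continuation, and the $2\pi$-cone-angle-doubled points are mild. One must also (ii) control how many odd eigenvalues lie at or below $\tfrac{15}{4}$. For (ii) I would first try to use the min--max characterization in the Taubes--Wu construction within the tetrahedrally equivariant class, together with the symmetry to decompose eigenspaces. If the count is not sharp enough, a fallback is a Pohozaev-type identity on the cone minus $\varepsilon$-tubes around the rays, using the translation field. In that identity the tube contribution is a multiple of $\sum_j|b_j|^2$, and its vanishing can be played against the boundary term on the unit sphere.
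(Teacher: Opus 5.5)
The paper does not prove this statement; it quotes it from Chen--He, whose argument (as summarized in Section~3.2.1) uses representations of the tetrahedral group, Taubes's algebraic identities and deformations of the branch points. Your nodal-domain route is different, and its topological half is sound: if all $b_j=0$, the lift $\tilde u$ has at least $10$ nodal arcs at each $\tilde q_j$, and Euler's formula on $T$ gives $F\ge 16$.

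\textbf{The gap.} The step that should turn $F\ge16$ into a contradiction is missing. Global Courant helps only if the Taubes--Wu eigenvalue $\lambda_*$ is among the first $15$ eigenvalues of the conical torus, and nothing in the proposal establishes this.
\begin{itemize}
\item The value $15/4$ is asserted, not derived. The homogeneity degree is whatever the Taubes--Wu minimization produces, and criticality alone does not pin it to $3/2$.
\item Knowing that $u$ is the lowest mode in \emph{one} equivariant class gives no control over odd modes in the other isotypic components of the symmetry group. For example, modes in the $3$-dimensional representation of $A_4$ come with multiplicity divisible by $3$. So ``a small number of odd ones'' is exactly the missing statement, not a consequence of the construction.
\item The Pohozaev fallback fails outright. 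For a critical function $|\nabla f|^2=O(\operatorname{dist}(\cdot,\Sigma))$, so the tube terms for a translation field are $O(\epsilon^2)$ and vanish whatever the $b_j$ are. This is the same estimate used in the proof of Lemma~\ref{frequencyfunction}. Such identities only detect the $A$-coefficients, which are already zero.
\end{itemize}

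\textbf{How to close it.} You can finish the nodal argument without any spectral input by using the symmetry more sharply in two places.

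First, sharpen the vanishing order. Let $\Gamma$ be the order-$24$ group of all lifts of $A_4$ to $T$; it contains the deck involution. Then $\tilde u$ transforms under $\Gamma$ by a real character, and such a character is trivial on elements of order $3$. The stabilizer of $q_j$ lifts to an order-$3$ automorphism of $T$ that fixes $\tilde q_j$ and rotates by $\pm 2\pi/3$ in $\xi=w^{1/2}$. Hence the leading Taylor term $\operatorname{Re}(c\,\xi^m)$ of $\tilde u$ at $\tilde q_j$ has $m$ odd \emph{and} divisible by $3$. This recovers $A=0$, and it shows that $b_j=0$ forces $m\ge 9$. That gives at least $18$ nodal arcs at each $\tilde q_j$, so $F\ge 4\cdot 8=32$.

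Second, replace global Courant by its equivariant version. As you note, $\tilde u$ minimizes the Rayleigh quotient among functions with the same $\Gamma$-equivariance. For any $\Gamma$-orbit of nodal domains with union $U$, the truncation $\tilde u\,\mathbf 1_U$ lies in the same class with the same Rayleigh quotient, so it is again a minimizer and hence an eigenfunction. If there were a second orbit, this eigenfunction would vanish on a nonempty open set, contradicting unique continuation on $T\setminus\{\tilde q_1,\dots,\tilde q_4\}$. So the nodal domains form a single $\Gamma$-orbit, and $F\le|\Gamma|=24<32$.
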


Given the difficulty of directly constructing such examples, Taubes and Wu initially assumed that $\mathbb{Z}_2$ harmonic functions exhibit favorable scaling properties, specifically being homogeneous with respect to coordinate rescalings. This reduction transforms the study of $\mathbb{Z}_2$ harmonic functions in $\mathbb{R}^3$ into the investigation of $\mathbb{Z}_2$ harmonic eigenfunctions on the unit sphere $\mathbb{S}^2$. In addition to simplifying the problem, this approach indirectly circumvents the need to examine the behavior of $\mathbb{Z}_2$ harmonic functions near singularities of the branching set, provided the branching set is a finite union of rays emanating from the origin, which then serves as the unique singularity.

\begin{definition}
    We say a harmonic function $(f,\Sigma,\mathcal{I})$ on $\mathbb{R}^n$ is homogeneous with respect to coordinate rescalings if it satisfies
    \begin{enumerate}
        \item $\Sigma$ is mapped to itself by any coordinate rescaling diffeomorphism.
        \item The pull-back of $\mathcal{I}$ via any coordinate rescaling diffeomorphism is isomorphic to $\mathcal{I}$.
        \item  The pull-back of  $f$  by the rescaling defined by any given positive number $\lambda$ has the form $\lambda^\alpha f$  for some constant \(\alpha\in\mathbb R\) independent of the scaling factor \(\lambda\).
    \end{enumerate}
\end{definition}

From now on, we work in dimension three and let $\mathcal{C}_{2n}$ be the space of unordered $2n$ distinct points in $\mathbb{S}^2$. Fix a configuration \(\mathbf p\in\mathcal C_{2n}\), and let \(\Sigma\) be the union of the \(2n\) rays from the origin through the points of \(\mathbf p\). 

Given a harmonic function $(f,\Sigma,\mathcal{I})$ on $\mathbb{R}^3$. We can restrict $\mathcal{I}$ to $\mathbb{S}^2$ to get the flat line bundle $\mathcal{I}_{\mathbf{p}}$ over $\mathbb{S}^2\setminus \mathbf{p}$, which has monodromy $-1$ on any embedded circle in $\mathbb{S}^2\setminus\mathbf{p}$ linking any given point from $\mathbf{p}$. We can now define a \(\mathbb Z_2\)-eigensection.

\begin{definition}
    Denote the Laplace operator for the round metric on $\mathbb{S}^2$ by $\Delta_{\mathbb{S}^2}$. We say $f\in \Gamma(\mathbb{S}^2\setminus \mathbf{p},\mathcal{I}_{\mathbf{p}})$ is an eigensection if $\Delta_{\mathbb{S}^2}f=-\lambda f$ in $\mathbb{S}^2\setminus \mathbf{p}$ and $\int_{\mathbb{S}^2\setminus \mathbf{p}}|df|^2dS$ is finite, where $\lambda\in\mathbb{R}$.

    Moreover, we define a critical or nondegenerate eigensection   in the same way  in \ref{defsingular}.
\end{definition}

 On the one hand, if there exists a homogeneous harmonic function $(f,\Sigma,\mathcal{I})$, $f$ must have the form $f=|x|^\alpha\pi^{*}g$, where $\pi$ is the canonical projection from $\mathbb{R}^3\setminus0$ to $\mathbb{S}^2$ and $g$ is a $\mathbb{Z}_2$ eigensection of $\mathcal{I}_{\mathbf{p}}$ with eigenvalue $\lambda=\alpha(\alpha+1)$. On the other hand, if there exists an eigensection $g$ of $\mathcal{I}_{\mathbf{p}}$ with eigenvalue $\lambda$, we can define $f=|x|^\alpha\pi^{*}g$ with $\alpha=\frac{1}{2}(-1+\sqrt{1+4\lambda})$. Then $f$ becomes a homogeneous $\mathbb{Z}_2$ harmonic function on $\mathbb{R}^3.$ These claims are direct computation using the relation of Laplacian operator on $\partial B_r$: $$\Delta_{\mathbb{R}^3} = \frac{\partial^2}{\partial r^2} + \frac{2}{r} \frac{\partial}{\partial r} + \frac{1}{r^2} \Delta_{\mathbb{S}^2}.$$

 Thus, Taubes and Wu constructed the example \ref{Taueg} by constructing  $\mathbb{Z}_2$  eigen-functions on $\mathbb{S}^2$. They constructed $\mathbb{Z}_2$ eigenfunctions by employing an energy-minimizing sequence under a fixed norm constraint. By further utilizing the properties of the group $G$—the subgroup of $SO(3)$ consisting of the orientation-preserving symmetries of the regular tetrahedron—they proved that these $\mathbb{Z}_2$ eigenfunctions are indeed critical points. Their argument did not establish nondegeneracy.
 
 Subsequently, Siqi He and Jiahuang Chen \cite{chen2024existencerigiditycriticalz2} proved nondegeneracy by leveraging finite group representations, certain algebraic identities developed by Taubes, and techniques involving the deformation of the branching set, thereby uncovering further properties of eigensections on $\mathbb{S}^2 \setminus \mathbf{p}$. We provide a brief overview of several key results below. We say $\mathbf{p}\in \mathcal{C}_{2n}$ is a \textbf{critical configuration}, if there exists a non-trivial critical eigensection in $\Gamma(\mathbb{S}^2\setminus\mathbf{p},\mathcal{I}_{\mathbf{p}})$.

\begin{theorem}[\cite{chen2024existencerigiditycriticalz2}]\label{chenhe}\

    \begin{enumerate}
        \item For each $n>1$, there exist infinitely many critical configurations in $\mathcal{C}_{2n}$.

        \item For each $n$, generic $\mathbf{p}\in \mathcal{C}_{2n}$ is non-critical.

        \item Over $\mathcal{C}_4$, suppose the configuration $\mathbf{p} = \{p_1, -p_1, p_2, p_3\}$ for some $p_1, p_2, p_3 \in \mathbb{S}^2$, i.e. $\mathbf{p}$ contains a pair of antipodal points, then $\mathbf{p}$ is non-critical.
    \end{enumerate}
\end{theorem}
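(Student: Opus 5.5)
Since this is a result of \cite{chen2024existencerigiditycriticalz2} and we only use it as a black box, we only indicate how we would prove it. The common tool is a \emph{Hadamard-type variational formula}. For a configuration $\mathbf p\in\mathcal C_{2n}$, let $\lambda(\mathbf p)$ be an eigenvalue of $\Delta_{\mathbb S^2}$ acting on $\Gamma(\mathbb S^2\setminus\mathbf p,\mathcal I_{\mathbf p})$, with eigensection $g$. Near each point $p_k$, $g$ has a Donaldson-type expansion $\operatorname{Re}(A_k\zeta^{1/2}+B_k\zeta^{3/2})+\cdots$. We would first show that, whenever $\lambda$ is simple, it is a smooth function on $\mathcal C_{2n}$. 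We would then show that its derivative in the direction of a motion $\dot p_k$ of the $k$-th point is, up to a universal constant,
\[
\sum_k \operatorname{Re}\bigl(A_k^2\,\dot p_k\bigr).
\]
The proof would excise small discs around the $p_k$, transport the section along the deformation, integrate by parts, and evaluate the boundary terms with the expansion. Consequently, critical configurations, where all $A_k=0$, are exactly the critical points of $\lambda$ on $\mathcal C_{2n}$.

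For item (1) we would combine this formula with symmetric criticality. Fix a finite subgroup $G\subset SO(3)$, for instance a dihedral group, and consider $G$-invariant configurations of $2n$ points. The space of such configurations is low-dimensional, often one-dimensional. We would restrict $\lambda$ to a suitable isotypic component, chosen via the representation theory of $G$, so that the relevant eigenvalue is simple. By symmetric criticality, a critical point of the restricted function is a critical point of $\lambda$ on all of $\mathcal C_{2n}$. To produce such a critical point, we would examine the boundary behaviour. As points collide, or as they approach a more symmetric configuration, $\lambda$ converges to explicitly computable limits. We would aim to show that $\lambda$ has an interior extremum on each such family. Letting $G$ and the orbit types vary would then give infinitely many distinct critical configurations.

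For item (2), the condition $A_1=\cdots=A_{2n}=0$ amounts to $4n$ real equations on a $4n$-dimensional space. The extra freedom in choosing the eigenvalue means we expect the critical locus to have positive codimension. Real-analytic dependence of $g$ on $\mathbf p$ shows that this locus is real-analytic. It therefore suffices to exhibit one non-critical configuration in each connected component, for example a configuration near a collision, where the eigensection looks like the model $\operatorname{Re}\zeta^{1/2}$ with $A\neq0$. This gives genericity.

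Item (3) is where we expect the main difficulty. First we would try a Pohozaev-type identity built from a conformal vector field $X$ on $\mathbb S^2$. We take $X$ to be the gradient of the height function along the axis through $p_1$ and $-p_1$, which vanishes exactly at $\pm p_1$. Integrating $\operatorname{div}\bigl(\langle X,\nabla g\rangle\nabla g-\tfrac12|\nabla g|^2X+\tfrac\lambda2 g^2X\bigr)$ over the complement of small discs gives two contributions. The interior term is a signed multiple of $\int x_3\,(\lambda g^2-|\nabla g|^2)$-type quantities, with $x_3$ the height coordinate. The boundary terms at $\pm p_1$ vanish to high order, while those at $p_2,p_3$ are quadratic in $B_2,B_3$ weighted by $X(p_2),X(p_3)$. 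We would supplement this with the Killing fields, which give balancing identities forcing relations between $B_2$ and $B_3$. Combining these identities with $\lambda>0$ and the $4$-point count should yield a sign contradiction unless $g\equiv0$. The delicate point is controlling the sign of the interior term. If that fails, our fallback is to use the antipodal symmetry $x\mapsto -x$ to split $g$ into even and odd parts and argue on each separately.
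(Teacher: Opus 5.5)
The paper does not prove this statement; it quotes it from \cite{chen2024existencerigiditycriticalz2} and describes the method as finite-group representations, Taubes's algebraic identities, and deformation of the branching set. Your outline uses the same three tools, but it has genuine gaps, and your approach to item~(3) would fail.

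\textbf{Item (3).} If $g$ is critical, then $|g|\lesssim r^{3/2}$ and $|\nabla g|\lesssim r^{1/2}$ near every $p_k$. Your field $V=\langle X,\nabla g\rangle\nabla g-\tfrac12|\nabla g|^2X+\tfrac\lambda2 g^2X$ is therefore $O(r)$. Its flux through $\partial D_\epsilon(p_k)$ is $O(\epsilon^2)$ for every $X$ smooth on $\mathbb{S}^2$. Equivalently, to leading order the flux is a residue of $X^z(\partial_z g)^2$. For $g=\operatorname{Re}(Bz^{3/2})$ one has $(\partial_z g)^2=\tfrac{9}{16}B^2z$, which has no residue. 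Terms quadratic in $B_2,B_3$ would need vector fields with double poles at the branch points.

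\begin{itemize}
\item The Killing identities therefore reduce to $0=0$. They give no relation between $B_2$ and $B_3$.
\item For $X=\nabla h$, the identity $\nabla^2h=-h\,g_{\mathbb{S}^2}$ makes the $|\nabla g|^2$ terms cancel in dimension two. The whole identity becomes $\lambda\int_{\mathbb{S}^2}h\,g^2=0$, which is just stationarity of $\lambda$ under the M\"obius flow of $\nabla h$.
\item This identity holds for every critical eigensection on every configuration, including the tetrahedral one of Taubes--Wu. The antipodal hypothesis never enters, and since $h$ changes sign there is no sign to exploit.
\item Your fallback also fails. The configuration $\{p_1,-p_1,p_2,p_3\}$ is invariant under $x\mapsto -x$ only when $p_3=-p_2$, and even then you give no argument.
\end{itemize}
A proof of~(3) has to use the antipodal pair in an essential way. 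One example of such an input is that $\mathbb{R}p_1$ is a full line in the branch locus of the homogeneous extension $r^\alpha g$. Nothing in your outline does this.

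\textbf{Item (1).} The decisive claim is that the isotypic eigenvalue has an interior critical point on a symmetric family, and you only announce it. Your stated source of infinitude also does not work. For fixed $n>1$, a nontrivial finite subgroup of $SO(3)$ preserving $2n$ points is either polyhedral, or cyclic or dihedral of order at most $4n$. For larger $m$, the only orbits of $C_m$ or $D_m$ with at most $2n$ elements are the poles. So there are only finitely many symmetric families. Infinitely many critical configurations would then have to come from infinitely many eigenvalue branches on a fixed family, and you would need to show the resulting configurations are distinct. A single highly symmetric configuration can be critical for many branches at once.

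Simplicity inside an isotypic component also does not make $\lambda$ smooth on $\mathcal{C}_{2n}$. Replace symmetric criticality by an averaging argument: if $g$ spans a real one-dimensional representation, then $\dot{\mathbf p}\mapsto\sum_k\operatorname{Re}(A_k^2\dot p_k)$ is $G$-invariant. It therefore vanishes identically once it vanishes on $G$-invariant directions.

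\textbf{Item (2).} The eigenvalue is not a free parameter, so your dimension count is off. By your own variation formula, the critical set of a simple branch is $\{d\lambda_j=0\}$. However, branches are not real-analytic across multiplicity loci in several parameters. You also need a non-critical witness for every branch. Near a collision, this requires control of the limiting eigenfunction at the collision point.

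The issues in items (1) and (2) can be repaired with more work; item~(3), as you propose it, cannot.
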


The theorem of Siqi He and Jiahuang Chen directly implies that if the branching set $\Sigma$ consists of two transverse intersecting lines through the origin in $\mathbb{R}^3$, there exists no homogeneous critical $\mathbb{Z}_2$ harmonic function whose branching set is $\Sigma$. Nevertheless, it remains an open question whether such a critical $\mathbb{Z}_2$ harmonic function can be constructed if the homogeneity constraint is relaxed. This leads to the following question: is it possible to construct critical or even nondegenerate $\mathbb{Z}_2$ harmonic functions in $\mathbb{R}^n$ whose branching sets are non-smooth and exhibit singularities, particularly in the case where $\Sigma$ is a pair of intersecting lines in $\mathbb{R}^3$? 

Furthermore, since a pair of transverse intersecting lines can be viewed as the asymptotes of a planar hyperbola, it is plausible that a solution could be obtained via a suitable limiting process applied to nondegenerate $\mathbb{Z}_2$ harmonic functions branched along hyperbolas. This provides a primary motivation for our previous attempts to utilize ellipsoidal coordinates to construct nondegenerate $\mathbb{Z}_2$ harmonic functions with planar hyperbolic branching sets.

\subsubsection{Proof of the theorem}
We first extend some important concepts, like frequency function, in the study of harmonic functions to our study of $\mathbb{Z}_2$ harmonic functions. Given a nontrivial $\mathbb{Z}_2$ harmonic function $f$ on $\mathbb{R}^3\setminus \Sigma$ with branching set $\Sigma$, we define  
\begin{align*}
    E(f,r)&:=\int_{B_r\setminus\Sigma}|\nabla f|^2\\
    H(f,r)&:=\int_{\partial B_r\setminus\Sigma}|f|^2dS=r^{2}\int_{\partial B_1\setminus\Sigma}|f|^2(r\omega)d\omega\\
    N(f,r)&:=\frac{rE(f,r)}{H(f,r)},
\end{align*}
where $B_r$ denotes the ball centered at the origin with radius $r$ and $\omega\in \partial B_1$.

\begin{lemma}\label{frequencyfunction}
    $\dfrac{\partial}{\partial r}N(f,r)\geq 0$ for any nontrivial critical $\mathbb{Z}_2$ harmonic function $f$. That is, $N(f,r)$ is monotone nondecreasing in $r$ if $f$ is nontrivial.
\end{lemma}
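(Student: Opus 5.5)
We follow the classical Almgren argument, working on $\mathbb{R}^3\setminus\Sigma$ and excising a small tubular neighborhood $T_\delta$ of $\Sigma$. Criticality makes the boundary terms on $\partial T_\delta$ vanish as $\delta\to0$. Since $f$ is a section of the flat line bundle $\mathcal I$, the quantities $|f|^2$, $|\nabla f|^2$, $f\,\partial_r f$ and $\langle x,\nabla f\rangle\,\partial_\nu f$ are single-valued, so all integrations by parts below are legitimate on $B_r\setminus T_\delta$.

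\emph{Step 1 (energy identity).} Integrating $\operatorname{div}(f\nabla f)=|\nabla f|^2$ over $B_r\setminus T_\delta$ gives
\[
E(f,r)=\int_{\partial B_r} f\,\partial_r f\,dS
\]
up to an error $\int_{\partial T_\delta\cap B_r} |f||\nabla f|$. Criticality means $|f|,|df|$ are Hölder and vanish on $\Sigma$. Since $\Sigma\cap B_r$ is compact of dimension one, $|\partial T_\delta\cap B_r|=O(\delta)$, so the error is $o(\delta)\to0$. (For $C^1$ $\Sigma$ one even has $|f|\,|\nabla f|=O(\delta^2)$ by the expansion \eqref{eq:intro-expansion} with $A=0$.)

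\emph{Step 2 (derivative of $H$).} By $H(f,r)=r^2\int_{\partial B_1}|f|^2(r\omega)\,d\omega$,
\[
H'(r)=\frac{2}{r}H+2\int_{\partial B_r} f\,\partial_r f=\frac{2}{r}H+2E .
\]
This uses only that $|f|^2$ is locally Lipschitz off a null set and continuous.

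\emph{Step 3 (Rellich–Pohozaev).} Integrate the identity
\[
\operatorname{div}\bigl(x|\nabla f|^2-2\langle x,\nabla f\rangle\nabla f\bigr)=(n-2)|\nabla f|^2=|\nabla f|^2 \qquad (n=3)
\]
over $B_r\setminus T_\delta$. This yields
\[
E'(r)=\int_{\partial B_r}|\nabla f|^2=\frac1r E+2\int_{\partial B_r}(\partial_r f)^2,
\]
modulo boundary terms on $\partial T_\delta$ bounded by $C r\int_{\partial T_\delta\cap B_r}|\nabla f|^2$. This is the step I expect to be the main obstacle. Hölder continuity with vanishing on $\Sigma$ gives $|\nabla f|\le C\delta^{\alpha}$ on $\partial T_\delta$, so the term is $O(\delta^{1+2\alpha})\to0$. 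If only a weaker decay were available, I would instead choose a sequence $\delta_k\to0$ along which $\int_{\partial T_{\delta_k}}|\nabla f|^2\to0$, obtained from $|\nabla f|\in L^2_{loc}$ via a coarea/Fubini argument. For nonsmooth $\Sigma$ one should cover by balls using $\dim_H\Sigma\le1$, but the Hölder vanishing still suffices.

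\emph{Step 4 (conclusion).} First, $H(r)>0$ for $r>0$. Otherwise $f\equiv0$ on a sphere, so $E(r)=0$ and $f$ is constant, hence zero, on $B_r$. Unique continuation on the double cover $\widetilde M$, where $p^*f$ is harmonic, then gives $f\equiv0$, contradicting nontriviality. Computing the logarithmic derivative,
\[
\frac{N'}{N}=\frac1r+\frac{E'}{E}-\frac{H'}{H}
=\frac{2\int_{\partial B_r}(\partial_r f)^2}{\int_{\partial B_r} f\partial_r f}-\frac{2\int_{\partial B_r} f\partial_r f}{\int_{\partial B_r} f^2}\ \ge 0
\]
by Cauchy–Schwarz, valid whenever $E>0$. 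The case $E(r)=0$ forces $f\equiv0$ by the same argument as above.
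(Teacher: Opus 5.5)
Your argument is correct and is essentially the paper's proof. The paper also excises a tube $\Sigma_\epsilon$, uses $\Delta|f|^2=2|\nabla f|^2$ to get $H'=\frac{2}{r}H+2E$, and integrates the Rellich field $V=(x\cdot\nabla f)\nabla f-\frac12 x|\nabla f|^2$ (which is $-\frac12$ times yours); it kills the $\partial\Sigma_\epsilon$ term with the decay $|\nabla f|\le C\,d^{1/2}$ coming from criticality, and concludes by Cauchy--Schwarz, so your explicit treatment of $H>0$ and $E>0$ is only a small addition.

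One caution: your fallback in Step 3 would not work. Knowing $\nabla f\in L^2_{loc}$ only gives $\liminf_{\delta\to0}\delta\int_{\partial T_\delta}|\nabla f|^2=0$; indeed, for a noncritical $f$ with $|\nabla f|\sim d^{-1/2}$ the tube integral stays of order one. So decay of $\nabla f$ at $\Sigma$ coming from criticality is genuinely needed, exactly as your main argument uses it.
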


\begin{proof}
    \begin{align*}
        \frac{\partial}{\partial r} H(f,r)&=\frac{2}{r}H(f,r)+r^2\int_{\partial B_1\setminus\Sigma}\frac{\partial(|f|^2(r\omega))}{\partial \nu}d\omega\\
        &=\frac{2}{r}H(f,r)+\int_{\partial B_r\setminus\Sigma}\frac{\partial(|f|^2)}{\partial \nu}dS,
    \end{align*}
    where $\nu$ denotes the unit outward normal vector of $\partial B_r$.

    We obtain  $\Delta |f|^2=2|\nabla f|^2$ on $\mathbb{R}^3\setminus \Sigma$ since $\Delta f=0$ on $\mathbb{R}^3\setminus \Sigma$.

   For a sufficiently small $\epsilon > 0$, we define the open tubular neighborhood of $\Sigma$ of radius $\epsilon$, denoted by $\Sigma_\epsilon$,  as the set of all points in $M$ whose distance to $\Sigma$ is strictly less than $\epsilon$:$$\Sigma_\epsilon := \{ p \in \mathbb{R}^3 \mid \operatorname{dist}(p, \Sigma) < \epsilon \}$$where $\operatorname{dist}(p, \Sigma) := \inf_{q \in \Sigma} \operatorname{dist}(p, q)$.

   Then by Stokes' theorem, we obtain 
   \begin{align}\label{eqfulu11}
       \int_{B_r\setminus\Sigma_\epsilon} \Delta|f|^2=\int_{\partial B_r \setminus \Sigma_\epsilon}\frac{\partial(|f|^2)}{\partial \nu}dS-\int_{\partial \Sigma_\epsilon\cap B_r}\frac{\partial(|f|^2)}{\partial \nu}dS.
   \end{align}
   
   By the definition of $\mathbb{Z}_2$ harmonic functions, $|f|^2$ is an ordinary  function on $\mathbb{R}^3$ and $\partial(|f|^2)/\partial \nu=O(1)$ near $\Sigma$. Thus, take $\epsilon\to 0$ in \eqref{eqfulu11}, we obtain 
   \[\int_{\partial \Sigma_\epsilon\cap B_r}\frac{\partial(|f|^2)}{\partial \nu}dS\to 0\]

   and
   \[ 2\int_{B_r\setminus\Sigma} |\nabla f|^2=\int_{B_r\setminus\Sigma} \Delta|f|^2=\int_{\partial B_r \setminus \Sigma}\frac{\partial(|f|^2)}{\partial \nu}dS.\]

   Thus 
\begin{align*}
    \frac{\partial}{\partial r} H(f,r)&=\frac{2}{r}H(f,r)+\int_{\partial B_r\setminus\Sigma}\frac{\partial(|f|^2)}{\partial \nu}\\
    &=\frac{2}{r}H(f,r)+2E(f,r).
\end{align*}

To calculate $\dfrac{\partial}{\partial r} E(f,r)$, we first define a vector field $V:=(x\cdot \nabla f)\nabla f-\dfrac{1}{2}x|\nabla f|^2$ on $\mathbb{R}^3\setminus\Sigma$. This vector field is well-defined on $\mathbb{R}^3\setminus\Sigma$ since it does not depend on the sign of $f$.

By direct calculation, we obtain 
\begin{align*}
    \operatorname{div}(V)&=\Delta f(x\cdot\nabla f)+\nabla(x\cdot \nabla f)\cdot \nabla f-\frac{3}{2}|\nabla f|^2-\frac{1}{2}x\cdot \nabla|\nabla f|^2\\
    &=-\frac{1}{2}|\nabla f|^2
    \end{align*}
    on $\mathbb{R}^3\setminus \Sigma$.

    By Stokes' theorem, we obtain 
    \begin{align}\label{eqfulu2}
        -\frac{1}{2}\int_{B_r\setminus\Sigma_\epsilon}|\nabla f|^2=\int_{B_r\setminus\Sigma_\epsilon}\operatorname{div}(V)&=\int_{\partial B_r\setminus\Sigma_\epsilon}(r|\frac{\partial f}{\partial \nu}|^2-\frac{r}{2}|\nabla f|^2)dS\\&\quad -\int_{\partial \Sigma_\epsilon\cap B_r}((x\cdot \nabla f)\frac{\partial f}{\partial v}-\frac{1}{2}(x\cdot v_\epsilon)|\nabla f|^2)dS,
    \end{align}  
    where $v_\epsilon$ denotes the unit  outward normal vector of $\partial \Sigma_\epsilon$.

    By the expansion formula of $f$, for any $x\in \Sigma$, there exists $r_x$, such that $$|\nabla f(y)|\leq  b_x \operatorname{dist}(y,\Sigma)^{1/2}+O(\operatorname{dist}(y,\Sigma)^{3/2})$$ when $y\in U_{r_x}$, where
    $U_{r_x}:=\{y\in \mathbb{R}^3\setminus\Sigma \mid \operatorname{dist}(x,y)<r_x\}.$
    
    $\bigcup\limits_{x\in\Sigma} U_{r_x}$ covers $\Sigma$ and since $\Sigma\cap \overline{B_r}$ is compact, there exists finitely many $r_i:=r_{x_i}(1\leq i\leq k)$ such that $\Sigma\cap \overline{B_r}\subset \bigcup\limits_{i=1}^k U_{r_i}$. Thus, there exists a small $\epsilon_r$ such that $\Sigma_{\epsilon_r}\cap B_r \subset \bigcup\limits_{i=1}^k U_{r_i}.$ Suppose $b=\max\limits_{1\leq i \leq k} |b_{x_i}|$. For $\epsilon< \epsilon_r$, there exists a constant $C$ depending on $r$ such that
    \begin{align*}
        |\int_{\partial \Sigma_\epsilon\cap B_r}((x\cdot \nabla f)\frac{\partial f}{\partial v}-\frac{1}{2}(x\cdot v_\epsilon)|\nabla f|^2)dS|\leq \frac{3r}{2}\int_{\partial \Sigma_\epsilon\cap B_r}|\nabla f|^2dS\leq Cr \epsilon\cdot \epsilon\leq C\epsilon^2.
    \end{align*}

    Thus if we take $\epsilon\to 0$ in \eqref{eqfulu2}, we obtain 
    \[-\frac{1}{2}\int_{B_r\setminus\Sigma}|\nabla f|^2=\int_{\partial B_r\setminus\Sigma}(r|\frac{\partial}{\partial \nu}f|^2-\frac{r}{2}|\nabla f|^2)dS.\]

    Thus 
    \begin{align*}
        \frac{\partial}{\partial r} E(f,r)=\int_{\partial B_r\setminus \Sigma}|\nabla f|^2dS=\frac{1}{r}E(f,r)+2\int_{\partial B_r\setminus \Sigma}|\frac{\partial f}{\partial \nu}|^2dS
    \end{align*}

    Thus, we obtain 
    \begin{align*}
        \frac{\frac{\partial}{\partial r}N(f,r)}{N(f,r)}&=\frac{\partial}{\partial r}\operatorname{ln}(\frac{rE(f,r)}{H(f,r)})\\
        &=\frac{1}{r}+\frac{\frac{\partial}{\partial r}E(f,r)}{E(f,r)}-\frac{\frac{\partial}{\partial r}H(f,r)}{H(f,r)}\\
        &=\frac{1}{r}+\frac{1}{r}+\frac{2\int_{\partial B_r\setminus \Sigma}|\frac{\partial f}{\partial \nu}|^2dS}{E(f,r)}-\frac{2}{r}-\frac{2E(f,r)}{H(f,r)}\\
        &=2(\frac{\int_{\partial B_r\setminus \Sigma}|\frac{\partial f}{\partial \nu}|^2dS}{E(f,r)}-\frac{E(f,r)}{H(f,r)}).
    \end{align*}

    Thus \begin{align*}
        \frac{\partial}{\partial r}N(f,r)=\frac{2r}{H(f,r)^2}(\int_{\partial B_r\setminus \Sigma}f^2dS\int_{\partial B_r\setminus \Sigma}|\frac{\partial f}{\partial \nu}|^2dS-E(f,r)^2).
    \end{align*}

    On the other hand, by the calculation above, we obtain
    \[E(f,r)=\frac{1}{2}\int_{\partial B_r\setminus\Sigma}\frac{\partial(|f|^2)}{\partial \nu}dS=\int_{\partial B_r\setminus\Sigma}f \cdot\frac{\partial f}{\partial \nu}dS.\]

    By Cauchy-Schwarz inequality, we obtain 
    \[\int_{\partial B_r\setminus \Sigma}f^2dS\int_{\partial B_r\setminus \Sigma}|\frac{\partial f}{\partial \nu}|^2dS\geq E(f,r)^2.\]

     Thus, $\frac{\partial}{\partial r}N(f,r)\geq 0$.
\end{proof}

\begin{definition}
    If $\lim\limits_{r\to \infty}N(f,r)<\infty$, we define the order of a nontrivial critical $\mathbb{Z}_2$ harmonic function $f$ at infinity $d_\infty(f)$ to be the limit $\lim\limits_{r\to \infty}N(f,r)$.
\end{definition}

\begin{lemma}\label{boundaryenergy}
    For any nontrivial critical $\mathbb{Z}_2$ harmonic function $f$ with branching set $\Sigma$, the following equation holds:
    \[\int_{\partial B_r\setminus\Sigma}|\nabla_{\partial B_r }f|^2dS=\int_{\partial B_r\setminus\Sigma}\frac{1}{r^2}|\nabla_{\mathbb{S}^2 }f|^2dS=\frac{E(f,r)}{r}[1+N(f,r)+\frac{rN'(f,r)}{2N(f,r)}],\]

    where we use $N'$ to denote the derivative of $N$ with respect to $r$. The same notation will be used for $E$ and $H$.
\end{lemma}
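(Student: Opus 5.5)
We split the tangential and normal parts of the gradient on the sphere and express everything through the identities already derived in the proof of Lemma \ref{frequencyfunction}. On $\partial B_r\setminus\Sigma$ we have the pointwise decomposition
\[
|\nabla f|^2=\Big|\frac{\partial f}{\partial\nu}\Big|^2+|\nabla_{\partial B_r}f|^2,
\]
and the rescaling identity $|\nabla_{\partial B_r}f|^2=r^{-2}|\nabla_{\mathbb S^2}f|^2$ for the induced metric on the sphere of radius $r$. The latter gives the first equality in the statement. Integrating the decomposition over $\partial B_r\setminus\Sigma$ and using
\[
E'(f,r)=\int_{\partial B_r\setminus\Sigma}|\nabla f|^2\,dS=\frac1rE(f,r)+2\int_{\partial B_r\setminus\Sigma}\Big|\frac{\partial f}{\partial\nu}\Big|^2dS
\]
from the proof of Lemma \ref{frequencyfunction}, we obtain
\[
\int_{\partial B_r\setminus\Sigma}|\nabla_{\partial B_r}f|^2dS=\frac1rE+\int_{\partial B_r\setminus\Sigma}\Big|\frac{\partial f}{\partial\nu}\Big|^2dS .
\]
It therefore remains to compute $D(r):=\int_{\partial B_r\setminus\Sigma}|\partial_\nu f|^2dS$ in terms of $E$, $N$ and $N'$.

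For this we use the formula derived at the end of the proof of Lemma \ref{frequencyfunction}:
\[
\frac{N'}{N}=2\Big(\frac{D}{E}-\frac{E}{H}\Big).
\]
Since $E/H=N/r$, this rearranges to
\[
D=E\Big(\frac{N}{r}+\frac{N'}{2N}\Big).
\]
Substituting gives
\[
\int_{\partial B_r\setminus\Sigma}|\nabla_{\partial B_r}f|^2dS=\frac Er\Big(1+N+\frac{rN'}{2N}\Big),
\]
which is exactly the claimed identity.

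The only delicate point is that all of these quantities are finite and that the identities from Lemma \ref{frequencyfunction} hold for every $r$. Those identities were obtained by excising $\Sigma_\epsilon$ and letting $\epsilon\to0$. Criticality gives $|\nabla f|=O(\operatorname{dist}^{1/2})$ near $\Sigma$, so $|\nabla f|^2$ is locally integrable on spheres and the boundary terms vanish. There is also a subtlety at radii where $\partial B_r$ is tangent to $\Sigma$. We handle these by working with almost every $r$, or by noting that both sides are continuous in $r$, and for $N'/N$ we use nontriviality so that $N>0$ and $H>0$. With this checked, the rest is routine algebra.
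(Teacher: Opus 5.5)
Your argument is correct and is essentially the paper's proof: both take $E'=\frac1rE+2\int_{\partial B_r\setminus\Sigma}|\partial_\nu f|^2\,dS$ and $H'=\frac2rH+2E$ from Lemma~\ref{frequencyfunction}, derive $\int_{\partial B_r\setminus\Sigma}|\partial_\nu f|^2\,dS=E\bigl(\frac Nr+\frac{N'}{2N}\bigr)$, and subtract this from $E'$. The only cosmetic difference is that the paper obtains the normal-derivative formula by differentiating $E=NH/r$, whereas you read it off from the already-derived expression for $N'/N$.
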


\begin{proof}
    From the computation in Lemma \ref{frequencyfunction}, we obtain 
    \begin{align*}
        E'(f,r)&=\frac{1}{r}E(f,r)+2\int_{\partial B_r\setminus\Sigma}|\frac{\partial f}{\partial \nu}|^2dS\\
        H'(f,r)&=\frac{2}{r}H(f,r)+2E(f,r).
    \end{align*}

    On the other hand, since $E(f,r)=\frac{N(f,r)H(f,r)}{r}$, we obtain
    \begin{align*}
        E'(f,r)&=\frac{N'(f,r)H(f,r)}{r}+\frac{N(f,r)H'(f,r)}{r}-\frac{N(f,r)H(f,r)}{r^2}\\
        &=\frac{N'(f,r)H(f,r)}{r}+\frac{N(f,r)H(f,r)}{r^2}+\frac{2N(f,r)E(f,r)}{r}\\
        &=E(f,r)[\frac{N'(f,r)}{N(f,r)}+\frac{1}{r}+\frac{2N(f,r)}{r}].
    \end{align*}

    This implies that 
    \[\int_{\partial B_r\setminus\Sigma}|\frac{\partial f}{\partial \nu}|^2dS=E(f,r)[\frac{N'(f,r)}{2N(f,r)}+\frac{N(f,r)}{r}].\]

    Since $\nabla f=\nabla_{\partial B_r}f+\frac{\partial f}{\partial \nu}$, we can obtain the result
    \[\int_{\partial B_r\setminus\Sigma}|\nabla_{\partial B_r }f|^2dS=E'(f,r)-\int_{\partial B_r\setminus\Sigma}|\frac{\partial f}{\partial \nu}|^2dS=\frac{E(f,r)}{r}[1+N(f,r)+\frac{rN'(f,r)}{2N(f,r)}].\]
\end{proof}

 Given a nontrivial critical $\mathbb{Z}_2$ harmonic function $f$, for any positive $\lambda>0$, we define the rescaled function $$f_{\lambda,s}(x):=\frac{f(\lambda x)}{(\int_{\partial B_s\setminus\Sigma_\lambda}|f(\lambda x)|^2dS)^{1/2}}=\frac{f(\lambda x)}{(\lambda^{-2}H(f,\lambda s))^{1/2}}$$  

 $f_{\lambda,s}$ is a well-defined critical $\mathbb{Z}_2$ harmonic function since  if $H(f,\lambda s)=0$, then $|f|=0$ on $\partial B_{\lambda s}$. Since $|f|$ is weakly subharmonic, by maximum principle, we have $|f|=0$ on $B_{\lambda s}$. This leads to a contradiction.
 
 The branching set $\Sigma_\lambda$ of $f_{\lambda,s}$ is $\{x\in \mathbb{R}^3 \mid \lambda x\in \Sigma\}$. Thus if the branching set of $f$ is a hyperbola $\mathcal{H}=\{\frac{x_1^2}{h_1^2}-\frac{x_3^2}{h_3^2}=1,x_2=0\}$, then the branching set $\Sigma_\lambda=\{\frac{x_1^2}{h_1^2}-\frac{x_3^2}{h_3^2}=\frac{1}{\lambda^2},x_2=0\}$ converges to two intersecting lines through the origin $\Sigma_\infty=\{\frac{x_1^2}{h_1^2}-\frac{x_3^2}{h_3^2}=0,x_2=0\}$, which is the singular model considered in \cite{TaubesWu2020} and \cite{chen2024existencerigiditycriticalz2}.

\begin{lemma}\label{compactness}
     Given a nontrivial critical $\mathbb{Z}_2$ harmonic function $f$ with $d_\infty(f)=N<\infty$, we define $f_{\lambda,s}$ as above. Then  for any $s\in(0,\infty)$, there exists an increasing sequence $\{\lambda_k\}_{k=1}^\infty$, such that there exists a constant $C$ independent of $k$ satisfying the following inequality for all $k$:
     \[\int_{\partial B_s\setminus\Sigma_{\lambda_k}}(|f_{\lambda_k,s}|^2+|\nabla_{\partial B_s}f_{\lambda_k,s}|^2)dS\leq C.\]
\end{lemma}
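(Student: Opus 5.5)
The plan is to use the normalization to make the $L^2$ term trivial, and then control the tangential gradient through Lemma \ref{boundaryenergy}, using the finiteness of $d_\infty(f)$. By construction,
\[
\int_{\partial B_s\setminus\Sigma_\lambda}|f_{\lambda,s}|^2\,dS=1
\]
for every $\lambda$, so only the tangential gradient term needs work. Rescaling gives
\[
E(f_{\lambda,s},s)=\frac{\lambda^{-1}E(f,\lambda s)}{\lambda^{-2}H(f,\lambda s)},\qquad
N(f_{\lambda,s},s)=N(f,\lambda s).
\]
The second identity holds because $N$ is invariant under scaling and under multiplication by constants. Hence
\[
E(f_{\lambda,s},s)=\frac{N(f,\lambda s)}{s}\,H(f_{\lambda,s},s)=\frac{N(f,\lambda s)}{s},
\]
which is bounded by $N/s$ by the monotonicity in Lemma \ref{frequencyfunction}.

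Next I apply Lemma \ref{boundaryenergy} to $f_{\lambda,s}$ at radius $s$. This gives
\[
\int_{\partial B_s\setminus\Sigma_\lambda}|\nabla_{\partial B_s}f_{\lambda,s}|^2dS
=\frac{E(f_{\lambda,s},s)}{s}\Big[1+N(f,\lambda s)+\frac{\lambda s\,N'(f,\lambda s)}{2N(f,\lambda s)}\Big],
\]
where I have used the chain rule $\frac{d}{dr}N(f_{\lambda,s},r)\big|_{r=s}=\lambda N'(f,\lambda s)$. The first two terms are bounded by $N/s^2$ and $N(1+N)/s^2$. The only obstruction is the derivative term $\frac{t N'(f,t)}{N(f,t)}$ at $t=\lambda s$, which need not be bounded for all $t$.

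The main step is choosing the sequence $\lambda_k$ so that this derivative term is controlled. Since $N(f,\cdot)$ is nondecreasing with finite limit $N$, we have
\[
\int_{1}^{\infty}N'(f,t)\,dt\le N<\infty.
\]
Substituting $t=e^{\tau}$, this becomes
\[
\int_0^\infty tN'(f,t)\,d\tau\le N.
\]
Therefore on each interval $[k,k+1]$ in $\tau$ there is some $\tau_k$ with $t_kN'(f,t_k)\le N$. Indeed the integrals over these intervals tend to $0$, so eventually one can even arrange $t_kN'(f,t_k)\to0$. I choose $t_k=e^{\tau_k}$ increasing and set $\lambda_k:=t_k/s$. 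For the lower bound, nontriviality gives $N(f,t)\ge N(f,1)>0$ for $t\ge1$, since $E>0$. (If $E\equiv0$ then $f$ is locally constant, and the sign monodromy around $\Sigma$ forces $f=0$.) Combining these, the bracket is bounded by $1+N+N/(2N(f,1))$, and so the total integral is bounded by a constant $C$ depending on $s$, $N$ and $N(f,1)$ but not on $k$.

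I expect the hardest point to be justifying that Lemma \ref{boundaryenergy} and the formulas for $N'$ apply at every radius. This requires $H(f,r)>0$, which is shown in the text via the maximum principle, and differentiability of $E$ and $H$ for almost every $r$, which holds in $r$ because $|f|,|\nabla f|\in L^2_{\mathrm{loc}}$ and because of the boundary estimates near $\Sigma$ used in Lemma \ref{frequencyfunction}. If differentiability holds only almost everywhere, I pick $\tau_k$ from the full-measure set, which the averaging argument allows.
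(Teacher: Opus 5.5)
Your proposal is correct and follows essentially the same route as the paper. The normalization disposes of the $L^2$ term, Lemma~\ref{boundaryenergy} together with the scaling identities reduces everything to bounding $\lambda N'(f,\lambda s)$, and your logarithmic averaging is the paper's observation that $\int_1^\infty N'(f,\lambda s)\,d\lambda<\infty$ forces $\liminf_{\lambda\to\infty}\lambda N'(f,\lambda s)=0$. One small simplification: the lower bound $N(f,t)\ge N(f,1)>0$ is unnecessary, because the prefactor $E(f_{\lambda,s},s)/s=N(f,\lambda s)/s^2$ cancels the $N(f,\lambda s)$ in the denominator of the derivative term.
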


\begin{proof}
First we can observe that 
\[\int_{\partial B_s\setminus\Sigma_{\lambda}}|f_{\lambda,s}|^2dS=1\]
for all $\lambda$.

   Substituting \(f_{\lambda,s}\) into the identity in Lemma \ref{boundaryenergy}, we get
    \[\int_{\partial B_s\setminus \Sigma_\lambda}|\nabla_{\partial B_s}f_{\lambda,s}|^2=\frac{E(f_{\lambda,s},s)}{s}[1+N(f_{\lambda,s},s)+\frac{N'(f_{\lambda,s},s)}{2N(f_{\lambda,s},s)}].\]

    By direct calculation, we obtain 
    \begin{align*}
        N(f_{\lambda,s},s)&=N(f,\lambda s)\\
        N'(f_{\lambda,s},s)&=\lambda N'(f,\lambda s).
    \end{align*}

    Moreover, we have 
    \begin{align*}
E(f_{\lambda,s},s)&=\int_{B_s\setminus\Sigma_\lambda}\frac{|\nabla(f(\lambda x))|^2dx}{\lambda^{-2}H(f,\lambda s)}\\
        &=\frac{\lambda E(f,\lambda s)}{H(f,\lambda s)}=\frac{N(f,\lambda s)}{s}.
    \end{align*}

    Thus, we obtain 
    \[\int_{\partial B_s\setminus \Sigma_\lambda}|\nabla_{\partial B_s}f_{\lambda,s}|^2=\frac{N(f,\lambda s)}{s^2}[1+N(f,\lambda s)+\frac{\lambda N'(f,\lambda s)}{2N(f,\lambda s)}].\]

    On the other hand, since 
    \[\int_{1}^\infty\frac{\lambda N'(f,\lambda s)}{\lambda}d\lambda=\frac{N-N(f,s)}{s}<\infty,\]

    we obtain 
    \[\liminf_{\lambda\to\infty}\lambda N'(f,\lambda s)=0.\]

    Thus, there exists an increasing subsequence $\{\lambda_k\}_{k=1}^\infty$ such that $\lim\limits_{k\to\infty}\lambda_k N'(f,\lambda_k s)=0.$ Then $N(f,\lambda_1 s)\leq N(f,\lambda_k s)\leq N.$ This implies that 
    \[\frac{N(f,\lambda s)}{s^2}[1+N(f,\lambda s)+\frac{\lambda N'(f,\lambda s)}{2N(f,\lambda s)}]\]
    is uniformly bounded. Thus there exists a constant $C$ independent of $k$, such that 
    \[\int_{\partial B_s\setminus\Sigma_{\lambda_k}}(|f_{\lambda_k,s}|^2+|\nabla_{\partial B_s}f_{\lambda_k,s}|^2)dS\leq C.\]
\end{proof}

 \begin{lemma}\label{controH}
    Let $f$ be a nontrivial critical $\mathbb{Z}_2$ harmonic function  with $d_\infty(f)=N<\infty$. For $0<s\leq r<\infty$, we have 
    \[ 1\leq\frac{H(f,r)}{H(f,s)}\leq (\frac{r}{s})^{2N+2}.\]
\end{lemma}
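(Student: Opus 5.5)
The plan is to integrate the logarithmic derivative of $H$ obtained in the proof of Lemma \ref{frequencyfunction}. There we showed
\[
H'(f,r)=\frac{2}{r}H(f,r)+2E(f,r),
\]
and dividing by $H(f,r)$ gives
\[
\frac{d}{dr}\log H(f,r)=\frac{2}{r}+\frac{2E(f,r)}{H(f,r)}=\frac{2+2N(f,r)}{r}.
\]
Note that $H(f,r)>0$ for every $r>0$, by the maximum principle argument given just before Lemma \ref{compactness}, so the logarithm makes sense. Also, $H$ is absolutely continuous in $r$, since it equals $r^2\int_{\partial B_1}|f|^2(r\omega)\,d\omega$ and $|f|^2$ is a continuous function on all of $\mathbb R^3$ for a critical $f$.

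For the upper bound, I use that $N(f,\cdot)$ is nondecreasing by Lemma \ref{frequencyfunction}. Since its limit at infinity is $N=d_\infty(f)$, we get $N(f,t)\le N$ for all $t$. Integrating from $s$ to $r$ then gives
\[
\log\frac{H(f,r)}{H(f,s)}=\int_s^r\frac{2+2N(f,t)}{t}\,dt\le (2N+2)\log\frac rs.
\]
Exponentiating yields $H(f,r)/H(f,s)\le (r/s)^{2N+2}$.

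For the lower bound, I note that $E(f,t)\ge 0$, so $N(f,t)\ge0$. The integrand above is therefore at least $2/t>0$. This gives $\log(H(f,r)/H(f,s))\ge 2\log(r/s)\ge0$, hence $H(f,r)/H(f,s)\ge1$, and in fact the stronger bound $(r/s)^2$.

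The only delicate point is justifying the identity $H'=\frac2rH+2E$ for every $r$. This is the Stokes argument across $\Sigma$ already carried out in Lemma \ref{frequencyfunction}: the boundary term on $\partial\Sigma_\epsilon$ vanishes because $\partial(|f|^2)/\partial\nu=O(1)$ near $\Sigma$. Since that lemma is already established, the step reduces to citing it. I would only remark that the hyperbola meets each ball in a compact set, so the covering argument there applies for every $r$.
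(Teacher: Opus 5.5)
Your proposal is correct and follows essentially the paper's argument. The paper integrates $\frac{d}{dr}\ln\bigl(r^{-2}H(f,r)\bigr)=2N(f,r)/r$ and uses $N(f,t)\le N$ from monotonicity, which is equivalent to your integration of $\frac{d}{dr}\log H=(2+2N(f,r))/r$; for the lower bound the paper only uses $H'=\frac{2}{r}H+2E\ge 0$, whereas your $N(f,t)\ge 0$ gives the slightly stronger bound $(r/s)^2$.
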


\begin{proof}
    Define $I(f,r)=r^{-2}H(f,r)$. By the calculation in Lemma \ref{frequencyfunction}, we obtain 
    \[ \frac{\partial}{\partial r} H(f,r)=\frac{2}{r}H(f,r)+2E(f,r)\geq 0.\]

    Thus 
    \[\frac{\partial}{\partial r} \operatorname{ln}I(f,r)=\frac{2N(f,r)}{r}.\]

    Integrating this equation from $s$ to $r$ (Suppose $r\geq s$), we get
    \[\operatorname{ln}I(f,r)-\operatorname{ln}I(f,s)=\int_s^{r}\frac{2N(f,t)}{t}dt\leq \int_s^{r}\frac{2N}{t}dt=2N\operatorname{ln}\frac{r}{s}.\]

    Thus \begin{align*}
        1\leq\frac{H(f,r)}{H(f,s)}=\frac{r^2}{s^2}\frac{I(f,r)}{I(f,s)}\leq (\frac{r}{s})^{2N+2}.
    \end{align*}

\end{proof}

We shall also need the following Caccioppoli inequality for $\mathbb{Z}_2$ harmonic functions.

\begin{lemma}[Caccioppoli inequality]\label{Caccioppoli}
    If $f$ is a $\mathbb{Z}_2$ harmonic function with branching set $\Sigma$, then there exists a  constant $C$ such that we have $$\int_{B_2 \setminus (B_{1/2}\cup \Sigma)} |\nabla f|^2  \le C \int_{B_4 \setminus (B_{1/4}\cup \Sigma)} |f|^2 $$
\end{lemma}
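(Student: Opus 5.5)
The plan is the standard cutoff argument, run on the annulus so that the origin plays no role. The only $\mathbb{Z}_2$-specific issue is the boundary term along $\Sigma$. Since $|f|^2$ and $|\nabla f|^2$ are ordinary functions, independent of the sign of $f$, all integrands below are well defined on $\mathbb{R}^3\setminus\Sigma$.

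First, fix a smooth cutoff $\eta\in C_c^\infty(B_4\setminus \overline{B_{1/4}})$ with $\eta\equiv1$ on $B_2\setminus B_{1/2}$, $0\le\eta\le1$ and $|\nabla\eta|\le C_0$. The quantity $\eta^2 f\nabla f$ is a well-defined vector field on $\mathbb{R}^3\setminus\Sigma$, because it is invariant under $f\mapsto -f$. On $\mathbb{R}^3\setminus\Sigma$ we have
\[
\operatorname{div}(\eta^2 f\nabla f)=\eta^2|\nabla f|^2+2\eta f\,\nabla\eta\cdot\nabla f .
\]

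Second, integrate this identity over $\operatorname{supp}\eta\setminus\Sigma_\epsilon$, where $\Sigma_\epsilon$ is the $\epsilon$-tubular neighborhood of $\Sigma$ used in Lemma~\ref{frequencyfunction}, and apply Stokes' theorem. The boundary term on $\partial B_4\cup\partial B_{1/4}$ vanishes because $\eta$ does. What remains is the integral over $\partial\Sigma_\epsilon\cap\operatorname{supp}\eta$ of $\eta^2 f\,\partial_\nu f$, that is, of $\tfrac12\eta^2\partial_\nu |f|^2$, and we must show it tends to $0$ as $\epsilon\to0$.

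This boundary term is the main obstacle, since the general definition only gives $f,\nabla f\in L^2_{\mathrm{loc}}$.
\begin{itemize}
\item In the setting where the lemma is used, $f$ is critical. There the expansion \eqref{eq:intro-expansion} with $A=0$ gives $|f|=O(\epsilon^{3/2})$ and $|\nabla f|=O(\epsilon^{1/2})$ near the compact set $\Sigma\cap\operatorname{supp}\eta$, so the integrand is $O(\epsilon^2)$. The area of $\partial\Sigma_\epsilon\cap\operatorname{supp}\eta$ is $O(\epsilon)$, hence the term is $O(\epsilon^3)\to0$. The finite covering of $\Sigma\cap\overline{B_4}$ runs exactly as in the proof of Lemma~\ref{frequencyfunction}.
\item For a general $\mathbb{Z}_2$ harmonic function, even the leading term $A\zeta^{1/2}$ gives $|f|\,|\nabla f|=O(\epsilon^{1/2}\epsilon^{-1/2})=O(1)$, so the term is still $O(\epsilon)\to0$.
\item If one wants to avoid the expansion altogether, one can instead average over $\epsilon\in(\delta,2\delta)$ using coarea. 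The resulting averaged term is bounded by $\delta^{-1}\int_{\Sigma_{2\delta}}|f||\nabla f|$, and one then argues that $|f|\,|\nabla f|\in L^1$ decays suitably on thin neighborhoods.
\end{itemize}
I would present the first route, which is the case actually needed later.

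Third, after letting $\epsilon\to0$ we obtain
\[
\int\eta^2|\nabla f|^2=-2\int\eta f\,\nabla\eta\cdot\nabla f\le \tfrac12\int\eta^2|\nabla f|^2+2\int|\nabla\eta|^2f^2 .
\]
Absorbing the first term on the right gives
\[
\int_{B_2\setminus(B_{1/2}\cup\Sigma)}|\nabla f|^2\le 4C_0^2\int_{B_4\setminus(B_{1/4}\cup\Sigma)}|f|^2 ,
\]
with a constant $C=4C_0^2$ that is universal, depending only on the cutoff.
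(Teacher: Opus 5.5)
Your argument is correct, but it handles $\Sigma$ differently from the paper.

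\textbf{Comparison with the paper.} You integrate $\operatorname{div}(\eta^2 f\nabla f)$ over $\operatorname{supp}\eta\setminus\Sigma_\epsilon$. You then show that the flux $\int_{\partial\Sigma_\epsilon}\eta^2 f\,\partial_\nu f$ vanishes in the limit, using pointwise bounds on both $f$ and $\nabla f$ from the expansion \eqref{eq:intro-expansion} and the area bound $O(\epsilon)$ for the tube boundary.

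The paper never produces a boundary term. It tests the equation against $\eta^2\chi_\epsilon^2 f$, where $\chi_\epsilon$ is a logarithmic cutoff vanishing on $\{d\le\epsilon^2\}$, so the test section is compactly supported away from $\Sigma$. The price is the extra term $8\int\eta^2 f^2|\nabla\chi_\epsilon|^2\le 8\sup|f|^2\cdot C/|\ln\epsilon|$, which tends to zero because a curve in $\mathbb{R}^3$ has zero capacity.

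So the paper needs only that $|f|$ be bounded near $\Sigma$, with no gradient bound and no Stokes theorem on $\partial\Sigma_\epsilon$. Your version uses the differentiated expansion, but it is shorter and gives the constant $4C_0^2$ instead of $8C_0^2$.

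\textbf{A correction on scope.} You say the critical case is ``the case actually needed later'' and propose to present only your first bullet. It is not. Lemma~\ref{eigenvaluecondition} applies this inequality to $h_{\lambda,s}=r\partial_r f_{\lambda,s}-Nf_{\lambda,s}$. The radial field $x\cdot\nabla$ is not tangent to the hyperbola $\Sigma_\lambda$, so applying it to $\operatorname{Re}(B\zeta^{3/2})$ produces a nonzero $\zeta^{1/2}$ term. Hence $h_{\lambda,s}$ is $\mathbb{Z}_2$ harmonic but in general not critical. This is exactly why the paper remarks that criticality is not required.

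You should therefore present your second bullet, where the flux is $O(1)\cdot O(\epsilon)\to0$. That argument proves the lemma as stated, whereas the critical-only version would leave Lemma~\ref{eigenvaluecondition} unsupported.

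Your third bullet is not yet an argument. The claim $\delta^{-1}\int_{\Sigma_{2\delta}}|f||\nabla f|\to0$ does not follow from $f,\nabla f\in L^2_{\mathrm{loc}}$ by Cauchy--Schwarz and Sobolev alone. It would need an extra input, such as $L^\infty$ control of $f$ near $\Sigma$ or a Hardy inequality for sections with monodromy $-1$.
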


\begin{proof}
    Let $A = B_2 \setminus B_{1/2}$  and let $\tilde{A} = B_4 \setminus B_{1/4}$.
    
    Choose a standard smooth cut-off function $\eta \in C_c^\infty(\tilde{A})$ such that
    $\eta \equiv 1$ on $A$ and $0 \le \eta \le 1$ everywhere. Moreover $|\nabla \eta| \le C_1$ for some absolute constant $C_1 > 0$.
    
    Let $d(x) = \text{dist}(x, \Sigma)$ denote the distance to the branching set. Since $\Sigma\cap \overline{B_4}$ is compact, we can  choose $\epsilon$ small enough such that $d\in C^1$ on $\Sigma_\epsilon\setminus\Sigma$.    Define a piecewise logarithmic cut-off function :$$\chi_\epsilon(x) =
\begin{cases}
0, & \text{if } d(x) \le \epsilon^2 \\
\frac{\ln(d(x)/\epsilon^2)}{\ln(1/\epsilon)}, & \text{if } \epsilon^2 < d(x) < \epsilon \\
1, & \text{if } d(x) \ge \epsilon
\end{cases}$$The gradient of this function satisfies $|\nabla \chi_\epsilon(x)| \le \frac{C_2}{d(x) |\ln \epsilon|}$. Because $\Sigma$ is a 1-dimensional set in $\mathbb{R}^3$, integrating the squared gradient yields:$$\int_{\tilde{A}} |\nabla \chi_\epsilon|^2  \le \frac{C}{|\ln \epsilon|^2} \Big( \ln \epsilon - \ln(\epsilon^2) \Big) < \frac{C}{|\ln \epsilon|}$$

Define the test section $\phi_\epsilon = \eta^2 \chi_\epsilon^2 f$. Notice that in  the support of $\phi_\epsilon$, $f$ is a perfectly well-behaved harmonic function, if we fix a trivialization. Multiplying the harmonic equation by $\phi_\epsilon$ and integrating by parts (which is now perfectly well-defined):$$\int_{\tilde{A}} \langle \nabla f, \nabla(\eta^2 \chi_\epsilon^2 f) \rangle  = 0$$

Thus we obtain:$$\int_{\tilde{A}} \eta^2 \chi_\epsilon^2 |\nabla f|^2  = -2 \int_{\tilde{A}} (\eta \chi_\epsilon \nabla f) \cdot (\chi_\epsilon f \nabla \eta)  - 2 \int_{\tilde{A}} (\eta \chi_\epsilon \nabla f) \cdot (\eta f \nabla \chi_\epsilon) $$

For the first term (involving $\nabla \eta$):$$\left| 2 \int_{\tilde{A}} (\eta \chi_\epsilon \nabla f) \cdot (\chi_\epsilon f \nabla \eta)  \right| \le \frac{1}{4} \int_{\tilde{A}} \eta^2 \chi_\epsilon^2 |\nabla f|^2  + 4 \int_{\tilde{A}} \chi_\epsilon^2 f^2 |\nabla \eta|^2 $$For the second term (involving $\nabla \chi_\epsilon$):$$\left| 2 \int_{\tilde{A}} (\eta \chi_\epsilon \nabla f) \cdot (\eta f \nabla \chi_\epsilon)  \right| \le \frac{1}{4} \int_{\tilde{A}} \eta^2 \chi_\epsilon^2 |\nabla f|^2 + 4 \int_{\tilde{A}} \eta^2 f^2 |\nabla \chi_\epsilon|^2 $$

Substituting these bounds back into the identity, we obtain $$\int_{\tilde{A}} \eta^2 \chi_\epsilon^2 |\nabla f|^2  \le 8 \int_{\tilde{A}} \chi_\epsilon^2 f^2 |\nabla \eta|^2  + 8 \int_{\tilde{A}} \eta^2 f^2 |\nabla \chi_\epsilon|^2 $$

Since 
\[\int_{A\setminus\Sigma_\epsilon} |\nabla f|^2 \leq \int_{\tilde{A}} \eta^2 \chi_\epsilon^2 |\nabla f|^2 ,\]
we obtain 
\[\int_{A\setminus\Sigma_\epsilon} |\nabla f|^2 \leq 8 \int_{\tilde{A}} \chi_\epsilon^2 f^2 |\nabla \eta|^2  + 8 \int_{\tilde{A}} \eta^2 f^2 |\nabla \chi_\epsilon|^2 .\]
We now take the limit as $\epsilon \to 0$.
The left side limits to $\int_{A\setminus\Sigma}  |\nabla f|^2 dx$.

Since $\chi_\epsilon \le 1$ uniformly, the Dominated Convergence Theorem ensures the first term limits to $$8 \int_{\tilde{A}} f^2 |\nabla \eta|^2 dx.$$ Since $|f|$ is continuous and then uniformly bounded on the compact set $\tilde{A}$, meaning $\sup_{x \in \tilde{A}} |f(x)|^2 \le M < \infty$. Then, we obtain $$8 \int_{\tilde{A}} \eta^2 f^2 |\nabla \chi_\epsilon|^2  \le 8 M \int_{\tilde{A}} |\nabla \chi_\epsilon|^2  \leq \frac{8MC}{|\operatorname{ln}\epsilon|}\xrightarrow{\epsilon \to 0} 0$$

After passing to the limit, we  obtain $$\int_{A\setminus\Sigma} |\nabla f|^2 dx\le 8 \int_{\tilde{A}} |\nabla \eta|^2 f^2 dx\leq C \int_{B_4 \setminus (B_{1/4}\cup \Sigma)} |f|^2 dx$$
\end{proof}

\begin{remark}
    We shall point out that we do not require $f$ to be critical in Lemma \ref{Caccioppoli}.
\end{remark}

\begin{lemma}\label{eigenvaluecondition}
    Under the same condition in Lemma \ref{compactness}, for any $s\in(0,\infty)$, we obtain 
   $$\lim_{\lambda \to \infty} \int_{1/2}^2 dr \int_{\partial B_r \setminus \Sigma_\lambda} \left( \left| r \frac{\partial f_{\lambda,s}}{\partial r} - N f_{\lambda,s} \right|^2 + \left| r^2 \frac{\partial^2 f_{\lambda,s}}{\partial r^2} - N(N-1) f_{\lambda,s} \right|^2 \right) dS = 0$$
\end{lemma}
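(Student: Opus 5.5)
The plan is to control the two defects $r\partial_r f_{\lambda,s}-Nf_{\lambda,s}$ and $r^2\partial_r^2 f_{\lambda,s}-N(N-1)f_{\lambda,s}$ by the frequency deficit $N-N(f,\lambda r)$ on the annulus $1/2\le r\le 2$. This deficit tends to $0$ because $N(f,\cdot)$ is monotone (Lemma \ref{frequencyfunction}) and has limit $N=d_\infty(f)$. Set $g=f_{\lambda,s}$ and write $N_g(r)=N(g,r)=N(f,\lambda r)$. By the computation in Lemma \ref{frequencyfunction}, $E(g,r)=\int_{\partial B_r}g\,\partial_r g$, and the proof there gives the exact identity
\[
N_g'(r)=\frac{2r}{H(g,r)^2}\Big(H(g,r)\int_{\partial B_r}|\partial_r g|^2-\Big(\int_{\partial B_r}g\,\partial_r g\Big)^2\Big)=\frac{2}{rH(g,r)}\int_{\partial B_r}\big|r\partial_r g-N_g(r)\,g\big|^2 ,
\]
where the second equality comes from expanding the square and using $rE(g,r)=N_g H(g,r)$. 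It follows that
\[
\int_{1/2}^2\!\int_{\partial B_r}|r\partial_r g-N_g(r)g|^2\,dS\,dr\le \sup_{[1/2,2]}\tfrac{r}{2}H(g,r)\,\big(N_g(2)-N_g(1/2)\big).
\]
Lemma \ref{controH}, together with the normalization $H(g,s)=s^2$, bounds $H(g,r)$ uniformly in $\lambda$ for $r\in[1/2,2]$. Moreover $N_g(2)-N_g(1/2)=N(f,2\lambda)-N(f,\lambda/2)\to0$. Replacing $N_g(r)$ by $N$ costs at most $|N-N(f,\lambda/2)|^2\sup H\to0$. This proves the first-order part of the statement.

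For the second-order term, differentiate $r\partial_r g-Ng=:w$ in $r$. We have $r\partial_r(r\partial_r g)=r^2\partial_r^2g+r\partial_r g$, so
\[
r^2\partial_r^2 g-N(N-1)g=r\partial_r w+(N-1)w .
\]
It therefore suffices to show $\int_{1/2}^2\int_{\partial B_r}|r\partial_r w|^2\to0$. For this I would use interior estimates. The function $w=x\cdot\nabla g-Ng$ is itself a $\mathbb{Z}_2$ harmonic function with the same branching set, since $x\cdot\nabla$ commutes with $\Delta$ up to the term $2\Delta$ and preserves the monodromy. Apply the Caccioppoli inequality (Lemma \ref{Caccioppoli}) to $w$, after rescaling the annuli so that the first-order step is carried out on $1/4\le r\le4$ instead. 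This gives $\int_{B_2\setminus B_{1/2}}|\nabla w|^2\le C\int_{B_4\setminus B_{1/4}}|w|^2\to0$, which controls $r\partial_r w$.

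The main obstacle is this last step. Caccioppoli for $w$ needs $w$, i.e.\ $\nabla g$, to have enough integrability near $\Sigma_\lambda$ for the logarithmic cutoff argument to go through, and the constant must be uniform in $\lambda$. Criticality gives $|\nabla g|$ Hölder and vanishing on $\Sigma_\lambda$, so $w$ is continuous and bounded on compact sets; the proof of Lemma \ref{Caccioppoli} used only local boundedness. To get uniformity I would note that the constant in that proof depends only on $\nabla\eta$, since the $\chi_\epsilon$ term vanishes for each fixed $\lambda$. I would also check that the first-order estimate holds on the enlarged annulus, which is immediate because Lemma \ref{controH} gives bounds on any fixed annulus.
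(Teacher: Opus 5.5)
Your proposal is correct and follows the paper's proof essentially step by step. Both use the identity $\int_{\partial B_r}|r\partial_r g-N_g(r)g|^2\,dS=\tfrac{r}{2}H(g,r)N_g'(r)$, Lemma~\ref{controH} to bound $H(f_{\lambda,s},r)$ uniformly for $r\in[1/4,4]$, and monotonicity to replace $N(f,\lambda r)$ by $N$. Both then apply Lemma~\ref{Caccioppoli} to the $\mathbb{Z}_2$ harmonic function $w=r\partial_r f_{\lambda,s}-Nf_{\lambda,s}$ and use $r^2\partial_r^2 g-N(N-1)g=r\partial_r w+(N-1)w$. One slip is harmless: the normalization is $H(f_{\lambda,s},s)=1$ rather than $s^2$, and this does not affect the uniform bound.
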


\begin{proof}
    From the computation in Lemma \ref{frequencyfunction}, for any critical $\mathbb{Z}_2$ harmonic function $f$, with branching set $\Sigma$, we obtain
    $$\frac{\partial}{\partial r}N(f,r) = \frac{2r}{H(f,r)} \left( \int_{\partial B_r\setminus\Sigma} |\frac{\partial f}{\partial \nu}|^2 dS - \frac{E(f,r)^2}{H(f,r)} \right).$$

    Since $\frac{E(f,r)}{H(f,r)} = \frac{N(f,r)}{r}$, we obtain 
    $$\frac{E(f,r)^2}{H(f,r)} = \frac{N(f,r)^2}{r^2} \int_{\partial B_r\setminus\Sigma}  f^2 dS.$$

    Thus, we have 
    \begin{align*}
        \int_{\partial B_r\setminus\Sigma} \left| \frac{\partial f}{\partial \nu} - \frac{N(f,r)}{r} f \right|^2 dS &= \int_{\partial B_r\setminus\Sigma} |\frac{\partial f}{\partial \nu}|^2 dS - \frac{2 N(f,r)}{r} \underbrace{\int_{\partial B_r\setminus\Sigma} f\cdot \frac{\partial f}{\partial \nu} dS}_{E(r)} + \frac{N(f,r)^2}{r^2} \int_{\partial B_r\setminus\Sigma} f^2 dS\\
        &= \int_{\partial B_r\setminus\Sigma} |\frac{\partial f}{\partial \nu}|^2 dS - 2 \frac{E(f,r)^2}{H(f,r)} + \frac{E(f,r)^2}{H(f,r)} \\&= \int_{\partial B_r\setminus\Sigma} |\frac{\partial f}{\partial \nu}|^2 dS - \frac{E(f,r)^2}{H(f,r)}\\
        &=\frac{\frac{\partial}{\partial r}N(f,r)}{2r}H(f,r).
    \end{align*}

    Since $f_{\lambda,s}$ is also $\mathbb{Z}_2$ harmonic and critical, it also satisfies the above equation. Thus, substituting $f_{\lambda,s}$ into the above equation, we obtain 
    $$\int_{\partial B_r\setminus\Sigma_\lambda} \left| \frac{\partial f_{\lambda,s}}{\partial r} - \frac{N(f_{\lambda,s},r)}{r} f_{\lambda,s} \right|^2 dS = \frac{\frac{\partial}{\partial r}N(f_{\lambda,s},r)}{2r} H(f_{\lambda,s},r).$$

    By change of variables, we obtain $N(f_{\lambda,s},r) = N(f,\lambda r)$ and $N'(f_{\lambda,s},r)=\lambda N'(f,\lambda r)$. Moreover, 
    \[H(f_{\lambda,s},r)=\frac{\int_{\partial B_r\setminus\Sigma_\lambda}|f(\lambda x)|^2dS}{\int_{\partial B_s\setminus\Sigma_\lambda}|f(\lambda x)|^2dS}=\frac{H(f,\lambda r)}{H(f,\lambda s)}.\]

    Thus, we have
    \begin{align*}
         \int_{1/4}^4 dr \int_{\partial B_r \setminus \Sigma_\lambda} \left| r \frac{\partial f_{\lambda,s}}{\partial r} - N(f,\lambda r)f_{\lambda,s} \right|^2dS&=\int_{1/4}^4\frac{\lambda rN'(f,\lambda r)}{2}\frac{H(f,\lambda r)}{H(f,\lambda s)}dr.
    \end{align*}
    
    Since $\frac{\lambda r}{\lambda s}\in[\frac{1}{4s},\frac{4}{s}]$ is bounded for fixed $s$, by Lemma \ref{controH}, there exists a constant $C(s)$ such that \[\frac{H(f,\lambda r)}{H(f,\lambda s)}\leq C(s).\]

    Thus 
    \begin{align}\label{HARDCODE}
         \int_{1/4}^4 dr \int_{\partial B_r \setminus \Sigma_\lambda} \left| r \frac{\partial f_{\lambda,s}}{\partial r} - N(f,\lambda r)f_{\lambda,s} \right|^2dS&\leq 2C(s)\int_{1/4}^4\lambda N'(f,\lambda r)dr\\&=2C(s)(N(f,4\lambda)-N(f,\frac{\lambda}{4})).
    \end{align}

    On the other hand, since $N(f,\lambda r)$ is increasing in $r$, we have 
    \begin{align*}
         \int_{1/4}^4 dr \int_{\partial B_r \setminus \Sigma_\lambda} \left| (N - N(f,\lambda r))f_{\lambda,s} \right|^2dS&\leq|N - N(f,\frac{\lambda}{4} )|^2\int_{1/4}^4 dr\int_{\partial B_r \setminus \Sigma_\lambda} |f_{\lambda,s}|^2dS\\
         &=|N - N(f,\frac{\lambda}{4} )|^2\int_{1/4}^4 \frac{H(f,\lambda r)}{H(f,\lambda s)} dr\leq4 C(s)|N - N(f,\frac{\lambda}{4} )|^2.
    \end{align*}
    Since $\lim\limits_{\lambda\to\infty}N(f,\lambda)=N$, if we take $\lambda\to \infty$, we can get
    \[\lim_{\lambda \to \infty}  \int_{1/4}^4 dr \int_{\partial B_r \setminus \Sigma_\lambda} \left| (N - N(f,\lambda r))f_{\lambda,s} \right|^2dS=0.\]

    Thus, if we take $\lambda\to \infty$ in \eqref{HARDCODE}, we have 
    \[\lim_{\lambda \to \infty}  \int_{1/4}^4 dr\int_{\partial B_r \setminus \Sigma_\lambda} \left| r \frac{\partial f_{\lambda,s}}{\partial r} - Nf_{\lambda,s} \right|^2dS=0.\]

    For the remaining part, we define $$h_{\lambda,s}(x) = r \frac{\partial f_{\lambda,s}}{\partial r} - Nf_{\lambda,s},$$
    where $r=|x|.$

   Since $[\Delta, r\frac{\partial}{\partial r}] = 2\Delta$ and $f_{\lambda,s}$ is critical, $h_{\lambda,s}$ is also a $\mathbb{Z}_2$ harmonic function on $\mathbb{R}^3 \setminus \Sigma_\lambda$. Applying Lemma \ref{Caccioppoli}, we have $$\int_{B_2 \setminus (B_{1/2}\cup \Sigma_\lambda)} |\nabla h_{\lambda,s}|^2 \le C \int_{B_4 \setminus (B_{1/4}\cup \Sigma_\lambda)} |h_{\lambda,s}|^2.$$

   Let $\lambda\to \infty$, we obtain
   \[\lim\limits_{\lambda\to \infty}\int_{B_2 \setminus (B_{1/2}\cup \Sigma_\lambda)} |\nabla h_{\lambda,s}|^2  =0.\]

   Since on $\partial B_r$, we have  $$|\nabla h_{\lambda,s}|^2 = |\partial_r h_{\lambda,s}|^2 + \frac{1}{r^2} |\nabla_{\mathbb{S}^2} h_{\lambda,s}|^2,$$
   
   we have 
   $$\lim_{\lambda \to \infty} \int_{1/2}^2 dr \int_{\partial B_r\setminus\Sigma_\lambda} \left| \frac{\partial h_{\lambda,s}}{\partial r} \right|^2 dS = 0.$$

   By direct computation, we obtain 
   $$r^2 \frac{\partial^2 f_{\lambda,s}}{\partial r^2} - N(N-1) f_{\lambda,s} = r \frac{\partial h_{\lambda,s}}{\partial r} + (N-1) \left( r \frac{\partial f_{\lambda,s}}{\partial r} - N f_{\lambda,s} \right).$$

   Integrating it on $B_2 \setminus (B_{1/2}\cup \Sigma_\lambda)$ and by triangle inequality, we obtain 
   
   $$\int_{1/2}^2dr\int_{\partial B_r\setminus\Sigma_\lambda}| r^2 \frac{\partial^2 f_{\lambda,s}}{\partial r^2} - N(N-1) f_{\lambda,s} |^2dS \le  2\int_{B_2 \setminus (B_{1/2}\cup \Sigma_\lambda)}\bigg(|2 \frac{\partial h_{\lambda,s}}{\partial r}|^2  + (N-1)^2 | r \frac{\partial f_{\lambda,s}}{\partial r} - N f_{\lambda,s}|^2\bigg). $$

   Let $\lambda\to \infty,$ we get the desired result.
\end{proof}

\begin{corollary}\label{Corhyper}
    Under the same condition in Lemma \ref{compactness}, there exists a $r_0\in[1/2,2]$ and an increasing sequence $\{\lambda_k\}_{k=1}^{\infty}$ such that both conditions hold at the same time:
    \begin{itemize}
        \item There exist a uniform constant $C$ such that $$\sup_{k}\int_{ \partial B_{r_0}\setminus\Sigma_{\lambda_k}}(|f_{\lambda_k,r_0}|^2+|\nabla_{\partial B_{r_0}}f_{\lambda_k,r_0}|^2)dS\leq C.$$

        \item  \[\lim_{k\to\infty}   \int_{ \partial B_{r_0}\setminus \Sigma_{\lambda_k}} \left( \left| r_0 \frac{\partial f_{\lambda_k,r_0}}{\partial r} - N f_{\lambda_k,r_0}\right|^2 + \left| r_0^2 \frac{\partial^2 f_{\lambda_k,r_0}}{\partial r^2} - N(N-1) f_{\lambda_k,r_0} \right|^2 \right) dS=0.\]
    \end{itemize}
\end{corollary}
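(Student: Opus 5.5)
The plan is to combine Lemma \ref{compactness} and Lemma \ref{eigenvaluecondition} by a Fubini/Chebyshev selection of a good radius. The subtlety is that Lemma \ref{compactness} was stated for a fixed normalization radius $s$ and produces its own sequence $\lambda_k$, while Lemma \ref{eigenvaluecondition} gives an averaged statement over $r\in[1/2,2]$ for a fixed $s$. We therefore work with one normalization, say $s=1$. Afterwards we renormalize at the selected radius $r_0$, using that $f_{\lambda,r_0}=f_{\lambda,1}\,(H(f,\lambda)/H(f,\lambda r_0))^{1/2}$. By Lemma \ref{controH} this factor lies in a fixed compact subinterval of $(0,\infty)$, uniformly in $\lambda$, because $r_0\in[1/2,2]$. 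Rescaling by such bounded factors preserves both uniform bounds and convergence to zero, so it suffices to prove both bullets for $f_{\lambda_k,1}$ restricted to $\partial B_{r_0}$.

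\textbf{Step 1.} Set
\[
a(\lambda,r):=\int_{\partial B_r\setminus\Sigma_\lambda}\Big(|f_{\lambda,1}|^2+|\nabla_{\partial B_r}f_{\lambda,1}|^2\Big)dS .
\]
We aim for a bound on $\int_{1/2}^2 a(\lambda,r)\,dr$ that is uniform along a sequence $\lambda_k\to\infty$. The cleanest route avoids the $N'$ issue in Lemma \ref{compactness} by controlling the full gradient through Caccioppoli. Observe that
\[
\int_{1/2}^2 a(\lambda,r)\,dr\le \int_{B_2\setminus(B_{1/2}\cup\Sigma_\lambda)}\big(|f_{\lambda,1}|^2+|\nabla f_{\lambda,1}|^2\big).
\]
By Lemma \ref{Caccioppoli} this is bounded by $C\int_{B_4\setminus B_{1/4}}|f_{\lambda,1}|^2=C\int_{1/4}^4 H(f,\lambda r)/H(f,\lambda)\,dr$. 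Lemma \ref{controH} bounds this uniformly in all $\lambda$, so in fact any sequence works.

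\textbf{Step 2.} Let
\[
b(\lambda,r):=\int_{\partial B_r\setminus\Sigma_\lambda}\Big(|r\partial_rf_{\lambda,1}-Nf_{\lambda,1}|^2+|r^2\partial_r^2f_{\lambda,1}-N(N-1)f_{\lambda,1}|^2\Big)dS .
\]
Lemma \ref{eigenvaluecondition} with $s=1$ gives $\int_{1/2}^2 b(\lambda,r)\,dr\to0$. Choose any sequence $\lambda_k\uparrow\infty$ with $\int_{1/2}^2 b(\lambda_k,r)\,dr\le 4^{-k}$.

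\textbf{Step 3.} This step selects $r_0$ and is the main obstacle, since we need a single radius that works for all $k$ simultaneously. Consider $\Phi(r):=\sum_k 2^{k}b(\lambda_k,r)$. It satisfies $\int_{1/2}^2\Phi\le\sum_k 2^{-k}<\infty$, so $\Phi(r)<\infty$ for a.e.\ $r$, and at such $r$ we get $b(\lambda_k,r)\le 2^{-k}\Phi(r)\to0$. For the bounds, Step 1 alone does not give $\sup_k a(\lambda_k,r)<\infty$ at a fixed $r$. Instead, use Fatou on $\liminf_k a(\lambda_k,r)$: the set where $\liminf_k a(\lambda_k,r)\le 2C/\tfrac32$ has positive measure. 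Pick $r_0$ in that set where $\Phi(r_0)<\infty$, and pass to a subsequence along which $a(\lambda_k,r_0)$ stays bounded. The decay $b(\lambda_k,r_0)\to0$ survives passing to a subsequence. Finally, renormalize from $s=1$ to $s=r_0$ as described in the first paragraph; this completes the plan.
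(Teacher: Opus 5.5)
Your proposal is correct. For the second bullet you do essentially what the paper does: normalize at $s=1$, pick $\lambda_k$ with summable $r$-averaged defects from Lemma \ref{eigenvaluecondition}, use Tonelli to get pointwise decay at almost every radius, and renormalize at $r_0$ with a factor controlled by Lemma \ref{controH}. The real difference is the first bullet.

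\textbf{How the paper handles the first bullet.} It needs no second selection. Put $F_k=f_{\lambda_k,r_0}$ and $n_k=N(f,\lambda_k r_0)$. The identity $r_0E(F_k,r_0)=n_kH(F_k,r_0)$ makes $r_0\partial_rF_k-n_kF_k$ orthogonal to $F_k$ in $L^2(\partial B_{r_0})$. Hence the radial defect splits as $A_k=(N-n_k)^2+\tfrac{r_0}{2}N'(F_k,r_0)$. Lemma \ref{boundaryenergy} then gives $\int_{\partial B_{r_0}}|\nabla_{\partial B_{r_0}}F_k|^2=n_k(1+n_k)/r_0^2+N'(F_k,r_0)/(2r_0)\le (N(N+1)+A_k)/r_0^2$. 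So the tangential bound follows from the second bullet at the same radius and holds along the entire sequence. It even shows the tangential energy tends to $N(N+1)/r_0^2$.

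\textbf{How your route differs.} You obtain an $r$-averaged $W^{1,2}$ bound from Lemma \ref{Caccioppoli} and Lemma \ref{controH}. You then use Fatou and Chebyshev to find a good radius inside the full-measure set $\{\Phi<\infty\}$, and extract a further subsequence. This is softer and uses no information about $N'$, so it would work for any quantity controlled only on average in $r$. The cost is an extra extraction and no sharp constant.

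Incidentally, Caccioppoli is not needed here. By monotonicity of $N$ and Lemma \ref{controH}, $\int_{B_2}|\nabla f_{\lambda,1}|^2=E(f_{\lambda,1},2)=N(f,2\lambda)H(f,2\lambda)/(2H(f,\lambda))\le N\,2^{2N+1}$.
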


\begin{proof}
Set
\[
\begin{aligned}
\mathcal D_\lambda(r):=
\int_{\partial B_r\setminus\Sigma_\lambda}
\bigg(
\left|r\frac{\partial f_{\lambda,1}}{\partial r}
-Nf_{\lambda,1}\right|^2
+\left|r^2\frac{\partial^2 f_{\lambda,1}}{\partial r^2}
-N(N-1)f_{\lambda,1}\right|^2
\bigg)\,dS .
\end{aligned}
\]
By Lemma~\ref{eigenvaluecondition},
\[
\lim_{\lambda\to\infty}
\int_{1/2}^{2}\mathcal D_\lambda(r)\,dr=0.
\]
Choose an increasing sequence \(\lambda_k\to\infty\) such that
\[
\int_{1/2}^{2}\mathcal D_{\lambda_k}(r)\,dr\leq 2^{-k}.
\]
It follows from Tonelli's theorem that
\[
\int_{1/2}^{2}\sum_{k=1}^{\infty}
\mathcal D_{\lambda_k}(r)\,dr<\infty.
\]
Hence, for almost every \(r\in[1/2,2]\),
\[
\sum_{k=1}^{\infty}\mathcal D_{\lambda_k}(r)<\infty.
\]
Fix one such radius \(r_0\). In particular, $\mathcal D_{\lambda_k}(r_0)\longrightarrow0.$

We now change the normalization radius from \(1\) to \(r_0\).
By definition,
\[
f_{\lambda_k,r_0}
=c_k f_{\lambda_k,1},
\qquad
c_k^2
=\frac{H(f,\lambda_k)}
       {H(f,\lambda_k r_0)}.
\]
Since \(r_0\in[1/2,2]\), Lemma~\ref{controH}, applied in the
appropriate order according as \(r_0\geq1\) or \(r_0\leq1\), gives
\[
2^{-2N-2}\leq c_k^2\leq 2^{2N+2}.
\]
Both radial-defect operators occurring in
\(\mathcal D_\lambda\) are linear in the function. Therefore,
\[
\begin{aligned}
&\int_{\partial B_{r_0}\setminus\Sigma_{\lambda_k}}
\bigg(
\left|r_0\frac{\partial f_{\lambda_k,r_0}}{\partial r}
-Nf_{\lambda_k,r_0}\right|^2
+\left|r_0^2
\frac{\partial^2f_{\lambda_k,r_0}}{\partial r^2}
-N(N-1)f_{\lambda_k,r_0}\right|^2
\bigg)\,dS
=c_k^2\mathcal D_{\lambda_k}(r_0)
\longrightarrow0.
\end{aligned}
\]
This proves the second assertion.

It remains to prove the uniform tangential \(W^{1,2}\) bound along
the same sequence. Put
\[
F_k:=f_{\lambda_k,r_0},
\qquad
n_k:=N(F_k,r_0)=N(f,\lambda_k r_0).
\]
By the normalization and monotonicity of the frequency, $H(F_k,r_0)=1,
\qquad
0\leq n_k\leq N.$

Let
\[
A_k:=
\int_{\partial B_{r_0}\setminus\Sigma_{\lambda_k}}
\left|r_0\frac{\partial F_k}{\partial r}-NF_k\right|^2\,dS.
\]
The second assertion already proved implies \(A_k\to0\).
Moreover,
\[
\begin{aligned}
&\int_{\partial B_{r_0}\setminus\Sigma_{\lambda_k}}
\left\langle
r_0\frac{\partial F_k}{\partial r}-n_kF_k,F_k
\right\rangle\,dS
=r_0E(F_k,r_0)-n_kH(F_k,r_0)=0.
\end{aligned}
\]
Consequently, the two summands in
\[
r_0\frac{\partial F_k}{\partial r}-NF_k
=
\left(r_0\frac{\partial F_k}{\partial r}-n_kF_k\right)
+(n_k-N)F_k
\]
are orthogonal in \(L^2(\partial B_{r_0}\setminus
\Sigma_{\lambda_k})\), and hence
\[
A_k
=
\int_{\partial B_{r_0}\setminus\Sigma_{\lambda_k}}
\left|r_0\frac{\partial F_k}{\partial r}-n_kF_k\right|^2\,dS
+(N-n_k)^2.
\]
The frequency derivative identity in  Lemma \ref{eigenvaluecondition} gives
\[
\int_{\partial B_{r_0}\setminus\Sigma_{\lambda_k}}
\left|r_0\frac{\partial F_k}{\partial r}-n_kF_k\right|^2\,dS
=\frac{r_0}{2}N'(F_k,r_0).
\]
Thus
\[
A_k=(N-n_k)^2+\frac{r_0}{2}N'(F_k,r_0),
\]
and in particular
\[
\frac{r_0}{2}N'(F_k,r_0)\leq A_k.
\]

Finally, Lemma~\ref{boundaryenergy}, together with
\(H(F_k,r_0)=1\), yields
\[
\begin{aligned}
\int_{\partial B_{r_0}\setminus\Sigma_{\lambda_k}}
|\nabla_{\partial B_{r_0}}F_k|^2\,dS
&=
\frac{n_k(1+n_k)}{r_0^2}
+\frac{N'(F_k,r_0)}{2r_0}\\
&\leq
\frac{N(1+N)+A_k}{r_0^2}.
\end{aligned}
\]
The right-hand side is uniformly bounded. Since
\[
\int_{\partial B_{r_0}\setminus\Sigma_{\lambda_k}}
|F_k|^2\,dS=1,
\]
the first assertion follows.
\end{proof}

With the preceding lemma, we can finally prove our main theorem:

\begin{theorem}
    There does not exist a global critical $\mathbb{Z}_2$ harmonic function $f$ on $\mathbb{R}^3$ whose branching set is a hyperbola $\mathcal{H}$ and whose order at infinity is finite ($d_\infty(f) < \infty$).
\end{theorem}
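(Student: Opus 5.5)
Assume, for contradiction, that $f$ is a nontrivial critical $\mathbb{Z}_2$ harmonic function on $\mathbb{R}^3$ with branching set $\mathcal H$ and $d_\infty(f)=N<\infty$. The plan is a blow-down argument: rescale $f$, extract a limit along a suitable sequence, and show that the limit is a homogeneous critical $\mathbb{Z}_2$ harmonic function branched along two intersecting lines. Theorem~\ref{chenhe}(3) forbids such a function.

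\textbf{Extracting the limit.} Apply Corollary~\ref{Corhyper} to obtain a radius $r_0\in[1/2,2]$ and a sequence $\lambda_k\to\infty$. Along this sequence, $F_k:=f_{\lambda_k,r_0}$ has unit $L^2$ norm on $\partial B_{r_0}$ and uniformly bounded tangential $W^{1,2}$ norm there, and both radial defects tend to zero. By Lemma~\ref{controH} and Lemma~\ref{Caccioppoli}, rescaled to arbitrary annuli, the $F_k$ are also bounded in $W^{1,2}_{\mathrm{loc}}$ of $\mathbb{R}^3\setminus\{0\}$. Their branching sets $\Sigma_{\lambda_k}$ converge to $\Sigma_\infty=\{x_1^2/h_1^2=x_3^2/h_3^2,\ x_2=0\}$.

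Since $|F_k|$ is a genuine function, I would first pass to a subsequence with $|F_k|\to|F_\infty|$ strongly in $L^2_{\mathrm{loc}}$. On any compact set $K\Subset\mathbb{R}^3\setminus(\Sigma_\infty\cup\{0\})$, the sets $\Sigma_{\lambda_k}$ eventually avoid a neighborhood of $K$. The line bundles $\mathcal I_{\lambda_k}$ are then eventually isomorphic to the limit bundle $\mathcal I_\infty$ on $K$, because the monodromy around each line is the limit of the monodromy around the nearby hyperbola branch. Interior elliptic estimates therefore give $C^\infty_{\mathrm{loc}}$ convergence of the $F_k$ to a $\mathbb{Z}_2$ harmonic section $F_\infty$ on $\mathbb{R}^3\setminus\Sigma_\infty$. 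To see that $F_\infty\neq0$, I would use the compact Rellich embedding on $\partial B_{r_0}$ supplied by the tangential bound, together with uniform control near the branching set, so that $\int_{\partial B_{r_0}}|F_\infty|^2=1$.

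\textbf{Homogeneity and criticality of the limit.} The vanishing radial defects give $r\partial_rF_\infty=NF_\infty$ on $\partial B_{r_0}$. I would pass this to the limit on the whole annulus $1/2<r<2$ using Lemma~\ref{eigenvaluecondition}, and then extend it by unique continuation. This shows that $F_\infty=|x|^N\pi^*g$, where $g$ is a $\mathbb{Z}_2$ eigensection on $\mathbb{S}^2$ minus the four points $\mathbf p=\{\pm p,\pm q\}$ cut out by $\Sigma_\infty$. This configuration contains antipodal pairs.

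\textbf{Main obstacle.} The hardest step is showing that $g$ is \emph{critical}, that is, that $|F_\infty|$ and $|\nabla F_\infty|$ are H\"older continuous and vanish on $\Sigma_\infty$. This does not follow directly from weak convergence. I would try to prove a uniform estimate $|\nabla F_k|(x)\le C\,\mathrm{dist}(x,\Sigma_{\lambda_k})^{1/2}$ on compact subsets of the annulus. The natural route is a uniform frequency-monotonicity argument centered at points of $\Sigma_{\lambda_k}$: since the $F_k$ are critical, their local frequency at branch points is at least $3/2$, and this should give $|F_k|\le C d^{3/2}$ uniformly, because the curvature of $\Sigma_{\lambda_k}$ on a fixed annulus stays bounded. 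This bound passes to the limit.

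Once criticality of $g$ is established, $\mathbf p\in\mathcal C_4$ is a critical configuration containing a pair of antipodal points. This contradicts Theorem~\ref{chenhe}(3), so no such $f$ exists.
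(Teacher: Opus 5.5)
Your argument is correct, but it differs from the paper's at the two decisive steps.

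\textbf{The paper's route.} The paper never leaves the sphere $\partial B_{r_0}$. It uses Lemma~\ref{diffeomorphism} to pull the bundles $\mathcal I_k$ back to one fixed bundle over $\partial B_{r_0}\setminus\Sigma$. It then takes a weak $W^{1,2}$ limit $u_\infty$ and derives the eigensection equation weakly from the vanishing radial defects of Corollary~\ref{Corhyper}. Criticality is obtained indirectly. The Bochner-type identity of Lemma~\ref{L2estimate} bounds $\|\nabla^2\hat f_k\|_{L^2}$ uniformly, so $u_\infty\in W^{2,2}$, and this rules out a nonzero $\operatorname{Re}(A_pz^{1/2})$ term.

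\textbf{Your route.} You take a three-dimensional $C^\infty_{\mathrm{loc}}$ limit on $\mathbb R^3\setminus\Sigma_\infty$, which only requires identifying the bundles over compact sets. You get homogeneity from Lemma~\ref{eigenvaluecondition}, Fatou and unique continuation, so the eigensection comes for free. You then prove criticality through a uniform decay estimate.

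\textbf{Why the decay estimate works.} The reason is not the curvature bound. The proof of Lemma~\ref{frequencyfunction} never uses the location of the centre, so $N_p(F_k,\rho)$ is nondecreasing for every $p\in\Sigma_{\lambda_k}$. For each fixed $k$, the local expansion at $p$ gives $H_p(F_k,\rho)=O(\rho^5)$ as $\rho\to0$. Hence $N_p(F_k,0^+)\ge 3/2$, and therefore $H_p(F_k,\rho)\le H_p(F_k,\rho_1)(\rho/\rho_1)^5$ for $\rho\le\rho_1$. Monotonicity of $H_p$ and the $L^2(B_R)$ bound from Lemma~\ref{controH} make $H_p(F_k,\rho_1)$ uniform in $k$.

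Interior estimates on $B_{d/2}(x)$, with $d=\operatorname{dist}(x,\Sigma_{\lambda_k})$, then give $|F_k|\le Cd^{3/2}$, $|\nabla F_k|\le Cd^{1/2}$ and $|\nabla^2F_k|\le Cd^{-1/2}$ uniformly. These pass to the limit and give the H\"older continuity required by Definition~\ref{defsingular}.

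\textbf{What each buys.} Your route yields pointwise, quantitative criticality. It avoids both the global bundle pull-back and the $W^{2,2}$ weak-compactness step. The paper's route stays on a compact surface and needs only an integrated Bochner identity, not recentred monotonicity.

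\textbf{A small simplification.} Nontriviality needs no control near the branch points. $|F_k|$ is an honest function bounded in $W^{1,2}(\partial B_{r_0})$. Rellich plus pointwise convergence off four points already gives $\int_{\partial B_{r_0}}|F_\infty|^2=1$.
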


\begin{proof}
    We argue by contradiction. Suppose there exists  such a nontrivial function $f$. We define $$f_{\lambda,s}(x):=\frac{f(\lambda x)}{(\int_{\partial B_s\setminus\Sigma_\lambda}|f(\lambda x)|^2dS)^{1/2}}$$ as before. We take $r_0$ in Corollary \ref{Corhyper} and define $$\hat{f}_k=f_{\lambda_k,r_0}|_{\partial B_{r_0}\setminus\Sigma_{\lambda_k}}.$$  Then $\hat{f}_k$ is $\mathbb{Z}_2$ function on $\partial B_{r_0}$ whose branching set is $\widetilde{\Sigma}_k=\Sigma_{\lambda_k}\cap \partial B_{r_0}.$ Moreover, $\|\hat{f}_k\|_{L^2(\partial B_{r_0}\setminus\Sigma_{\lambda_k})}=1$ for all $k$.
    
    Suppose the branching set of $f$ is $\mathcal{H}=\{\frac{x_1^2}{h_1^2}-\frac{x_3^2}{h_3^2}=1,x_2=0\}$, then the branching set $\Sigma_{\lambda_k}=\{\frac{x_1^2}{h_1^2}-\frac{x_3^2}{h_3^2}=\frac{1}{\lambda_k^2},x_2=0\}$ converges to two intersecting lines through the origin $\Sigma_\infty=\{\frac{x_1^2}{h_1^2}-\frac{x_3^2}{h_3^2}=0,x_2=0\}.$ Thus $\widetilde{\Sigma}_k$, consisting of only 4 points when $k$ is sufficiently large, converges uniformly to $\Sigma=\{p_1,-p_1,p_2,-p_2\}=\Sigma_\infty\cap \partial B_{r_0}.$

    Suppose $\hat{f}_k$ is the smooth section of $\mathcal{I}_k$, where $\mathcal{I}_k$ is a real line bundle over $\partial B_{r_0}\setminus\widetilde{\Sigma}_k$ with monodromy $-1$ around each point in $\widetilde{\Sigma}_k$. We shall first make use of the following lemma:
    \begin{lemma}\label{diffeomorphism}
    Let $\mathcal{I}$ be a real line bundle over
    $\partial B_{r_0}\setminus\Sigma$ with monodromy $-1$ around each
    puncture. There exist a sequence of orientation-preserving
    diffeomorphisms
    \[
    \phi_k:\partial B_{r_0}\longrightarrow\partial B_{r_0}
    \]
    and fiberwise isometric flat bundle isomorphisms
    \[
    \widetilde{\phi}_k:\mathcal I\longrightarrow\mathcal I_k
    \]
     covering $\phi_k$ such that, for every compact set
    $K\Subset\partial B_{r_0}\setminus\Sigma$,
    \begin{enumerate}
    \item
    \[
    \phi_k(\Sigma)=\widetilde{\Sigma}_k;
    \]

    \item
    \[
    \|\phi_k-\operatorname{id}_{\partial B_{r_0}}\|_{C^\infty}
    \longrightarrow0
    \qquad\text{as }k\to\infty;
    \]

    \item after possibly replacing $\widetilde{\phi}_k$ by
    $-\widetilde{\phi}_k$ on each connected component of $K$,
    the map $\widetilde{\phi}_k$ converges to the identity in local
    flat trivializations over $K$.
    \end{enumerate}
    Moreover, $\widetilde{\phi}_k$ preserves the flat connections:
    \[
    \widetilde{\phi}_k^*\nabla^{\mathcal I_k}
    =
    \nabla^{\mathcal I}.
    \]
\end{lemma}

    \begin{proof}[Proof of Lemma \ref{diffeomorphism}]
        Since $\widetilde{\Sigma}_k \to \Sigma$, there is a standard way to construct $\phi_k$ on $\partial B_{r_0}$ satisfying the above condition. Thus we only need to give a construction of $\widetilde{\phi}_k$.

        Let $E_k = \phi_k^* \mathcal{I}_k$ over $\partial B_{r_0} \setminus \Sigma$. There is a classical result that all real line bundles on a paracompact topological space $X$ are classified up to bundle isomorphism by their first Stiefel-Whitney class:$$w_1 \in H^1(X; \mathbb{Z}_2) \cong \operatorname{Hom}(\pi_1(X), \mathbb{Z}_2).$$ The fundamental group $\pi_1(\partial B_{r_0} \setminus \Sigma)$ is generated by four small loops circling the four punctures of $\Sigma$. Since $E_k$ is locally flat and has the same monodromy of $-1$ around  every point in $\Sigma$, $E_k$ shares the same first Stiefel-Whitney class with $\mathcal{I}$. Therefore, there
        exists a fiberwise isometric flat bundle isomorphism $\Psi_k:\mathcal I\longrightarrow E_k$. Equivalently, using the universal cover
        $\widetilde X$ of
         $X:=\partial B_{r_0}\setminus\Sigma$, both bundles may be represented as
        \[
         \mathcal I
         \cong
         \widetilde X\times_{\rho}\mathbb R,
         \qquad
          E_k
         \cong
         \widetilde X\times_{\rho}\mathbb R,
         \]
        where $\rho$ is the representation $\rho:\pi_1(\partial B_{r_0} \setminus \Sigma)\to \{\pm 1\}$ and $\Psi_k$ is induced by the identity map on
        $\widetilde X\times\mathbb R$.

        By the definition of the pullback bundle, there is a natural evaluation map $\Phi_k : E_k \to \mathcal{I}_k$ covering $\phi_k$, given by $(x, w) \mapsto (\phi_k(x), w)$. We define $\widetilde{\phi}_k$ as the composition:
        \begin{align*}
            \widetilde{\phi}_k = \Phi_k \circ \Psi_k : \mathcal{I} \to E_k \to \mathcal{I}_k.
        \end{align*}
        
        Since both $\Phi_k$ and $\Psi_k$ preserve the flat structures,
        $\widetilde{\phi}_k$ preserves the flat connections:
         \[
        \widetilde{\phi}_k^*\nabla^{\mathcal I_k}
         =
        \nabla^{\mathcal I}.
         \]

     It remains to explain the local convergence. Let
$K\Subset\partial B_{r_0}\setminus\Sigma$. Over a sufficiently small
open set $U$ containing a connected component of $K$, choose flat
unit trivializations of $\mathcal I$ and $\mathcal I_k$. In these
trivializations, a flat fiberwise isometric bundle map is
multiplication by a locally constant element of $\{\pm1\}$. Hence,
after possibly replacing $\widetilde{\phi}_k$ by
$-\widetilde{\phi}_k$ on that connected component, its fiberwise
coefficient is equal to $1$. Since
$\phi_k\to\operatorname{id}$ in $C^\infty$, it follows that
$\widetilde{\phi}_k$ converges to the identity in these local flat
trivializations. This proves the result.
    \end{proof}

    Let $u_k=\tilde{\phi}_k^*\hat{f}_k=\widetilde{\phi}_k^{-1}\circ\hat{f}_k\circ \phi_k\in\Gamma(\partial B_{r_0}\setminus\Sigma,\mathcal{I})$ and let $g$ be the standard Riemannian metric on $\partial B_{r_0}$. Then the diffeomorphism $\phi_k$ induce  a Riemannian metric $g_k=\phi_k^*g$ on $\partial B_{r_0}$ and $g_k\to g$ uniformly  Since $\phi_k\to \operatorname{id}_{\partial B_{r_0}}$ in $C^\infty(\partial B_{r_0})$. Since $\widetilde{\phi}_k$ is an isometry between fibers, we obtain $|u_k(x)|_{I} = |\hat{f}_k(\phi_k(x))|_{\mathcal{I}_k}.$ Since $\widetilde{\phi}_k$ preserves the connection, we obtain $|\nabla_{g_k} u_k(x)|_{g_k}^2 = |\nabla_g\hat{f}_k(\phi_k(x))|_g^2$ on any compact subset of $\partial B_{r_0}$. 

    Consider a sequence of compact subsets $K_m$ such that
    \begin{itemize}
        \item $K_1 \Subset K_2 \Subset K_3 \Subset \dots \Subset \partial B_{r_0} \setminus \Sigma.$

        \item $\bigcup_{m=1}^\infty K_m = \partial B_{r_0} \setminus \Sigma.$
    \end{itemize}
  On any $K_m$, by Corollary \ref{Corhyper} there exists a constant $C_0$ depending only on $(\partial B_{r_0},g)$ such that we have
  \begin{align*}
      \int_{K_m} (|u_k|^2 + |\nabla_{g_k} u_k|_{g_k}^2) dV_{g_k} &= \int_{K_m} \phi_k^* \left( (|\hat{f}_k|^2 + |\nabla_g \hat{f}_k|_g^2) dV_g \right)\\
      &= \int_{\phi_k(K_m)} (|\hat{f}_k|^2 + |\nabla_g \hat{f}_k|_g^2) dV_g\\
      &\leq \int_{\partial B_{r_0} \setminus \Sigma_{\lambda_k}} (|\hat{f}_k|^2 + |\nabla_g \hat{f}_k|_g^2) dV_g \leq C.
  \end{align*}

    Since the convergence  $g_k\to g$ does not depend on $K_m$, there exists constant $\Lambda_k\geq 1$ such that 
    \begin{itemize}
        \item When $k \to \infty$, $\Lambda_k \to 1$.

        \item $\frac{1}{\Lambda_k}dV_{g_k}\leq dV_g \le \Lambda_k dV_{g_k}$.

        \item $\frac{1}{\Lambda_k}|\nabla_{g_k} u_k|_{g_k}^2 dV_{g_k}\leq |\nabla_g u_k|_g^2dV_g \le \Lambda_k |\nabla_{g_k} u_k|_{g_k}^2 dV_{g_k}$.

    \end{itemize}

    Thus there exists a constant $C'$ independent of $k$ and $K_m$ such that $$\|u_k\|_{W^{1,2}(K_m)}^2 = \int_{K_m} (|u_k|^2 + |\nabla_g u_k|_g^2) dV_g\leq \Lambda_kC<C'.$$

   Here $W^{1,2}(K_m)$ is defined in standard sense.
   
  Letting $m\to \infty$, by Monotone Convergence Theorem, this implies the uniform bound
    \[\|u_k\|_{W^{1,2}(\partial B_{r_0}\setminus\Sigma,\mathcal{I})}^2<C'.\]

    Here  we define the Hilbert space $W^{k,2}(\partial B_{r_0}\setminus\Sigma,\mathcal{I})$ to be the completion of the compactly supported sections of $\mathcal{I}$ under the norm $$\| u\|^2_{W^{k,2}(\partial B_{r_0}\setminus\Sigma,\mathcal{I})}=\int_{\partial B_{r_0}\setminus\Sigma} (|u|^2 + \sum_{i=1}^k|\nabla_g^i u|_g^2) dV_g.$$
   
    By the Banach-Alaoglu theorem, there exists a subsequence (still denoted as $u_k$) and a limit section $u_\infty \in W^{1,2}(\partial B_{r_0} \setminus \Sigma, \mathcal{I})$ such that:
    \begin{itemize}
        \item  $u_k \rightharpoonup u_\infty$ weakly in $W^{1,2}(\partial B_{r_0} \setminus \Sigma, \mathcal{I})$.
        \item   Furthermore, by the Rellich-Kondrachov compact embedding theorem, $u_k \to u_\infty$ strongly in $L^2(\partial B_{r_0} \setminus \Sigma, \mathcal{I})$ in the sense
        \[\lim_{k\to\infty}\int_{\partial B_{r_0}}|u_k - u_\infty|^2dV_g=\lim_{k\to\infty}\int_{\partial B_{r_0}\setminus\Sigma}|u_k - u_\infty|^2dV_g=0.\]
    \end{itemize}

    We shall first show that $u_\infty$ is non-trivial. On any $K_m$ we have 
    \begin{align*}
      \|u_k\|_{L^2(\partial B_{r_0})}^2\geq \|u_k\|_{L^2(K_m)}^2&= \int_{K_m}|u_k|^2dV_{g}\\&\geq \frac{1}{\Lambda_k}\int_{K_m}|u_k|^2dV_{g_k}= \frac{1}{\Lambda_k}\int_{\phi_k(K_m)}|\hat{f}_k|^2dV_{g}.
    \end{align*}
   When we let $m\to \infty$, by Monotone Convergence Theorem, this implies
   \[\|u_k\|_{L^2(\partial B_{r_0})}^2\geq \frac{1}{\Lambda_k}\int_{\partial B_{r_0}\setminus\Sigma_{\lambda_k}}|\hat{f}_k|^2dV_{g}=\frac{1}{\Lambda_k}.\]

   Thus \[\|u_{\infty}\|_{L^2(\partial B_{r_0})}^2=\lim_{k\to\infty}\|u_k\|_{L^2(\partial B_{r_0})}^2\geq1, \]
   which implies that $u_\infty$ is non-trivial.
   
    We finally claim that $u_\infty$ is a critical eigensection of $\mathcal{I}$, which implies that $\Sigma=\{p_1,-p_1,p_2,-p_2\}$ is a critical configuration defined in Section 3.2.1. Thus, Theorem~\ref{chenhe} yields a contradiction, completing the proof.

    We first show that $u_\infty$ is an eigensection. Given any smooth compactly supported test section $\psi\in C^\infty_c(\partial B_{r_0} \setminus \Sigma, \mathcal{I})$. We take $k$ sufficiently large such that $\operatorname{supp}(\psi)\subset\partial  B_{r_0} \setminus \Sigma_k$. Denote the standard Laplacian on $\partial  B_{r_0}$ by $\Delta_{r_0}$.

    Consider the term
    \begin{align*}
        \int_{\partial B_{r_0}\setminus\Sigma}\langle N(N+1)u_\infty,\psi\rangle dV_g-\int_{\partial B_{r_0}\setminus\Sigma}\langle r_0^2\nabla_gu_\infty,\nabla_g\psi\rangle dV_g.
    \end{align*}

   Suppose $K'=\operatorname{supp}(\psi)$. By weak convergence, we have 
   \begin{align*}
       \int_{\partial B_{r_0}\setminus\Sigma}\langle N(N+1)u_\infty,\psi\rangle dV_g&=\lim_{k\to\infty}\int_{\partial B_{r_0}\setminus\Sigma}\langle N(N+1)u_k,\psi\rangle dV_g\\
       &=\lim_{k\to\infty}\bigg(\int_{\partial B_{r_0}\setminus\Sigma}\langle N(N+1)u_k,\psi\rangle dV_{g_k}\\&\quad \quad\quad \quad-\int_{\partial B_{r_0}\setminus\Sigma}\langle N(N+1)u_k,\psi\rangle (dV_{g_k}-dV_g)\bigg).
   \end{align*}

   Since $u_k \to u_\infty$ strongly in $L^2(\partial B_{r_0} \setminus \Sigma, \mathcal{I})$, there exists a constant $C$ such that 
   \begin{align*}
       \int_{\partial B_{r_0}\setminus\Sigma}|\langle N(N+1)u_k,\psi\rangle (dV_{g_k}-dV_g)|&\leq \int_{\partial B_{r_0}\setminus\Sigma}|\langle N(N+1)u_k,\psi\rangle |(\Lambda_k-1)dV_g\\
       &\leq (\Lambda_k-1)\|\psi\|_{L^2(\partial B_{r_0})}\|N(N+1)u_k\|_{L^2(\partial B_{r_0})}\\
       &\leq    C(\Lambda_k-1)\|\psi\|_{L^2(\partial B_{r_0})}(\|N(N+1)u_\infty\|_{L^2(\partial B_{r_0})}+1).
   \end{align*}
    Let $k\to \infty$, we have 
    \[ \lim_{k\to \infty}\int_{\partial B_{r_0}\setminus\Sigma}|\langle N(N+1)u_k,\psi\rangle (dV_{g_k}-dV_g)|=0.\]

    Thus 
    \begin{align*}
         \int_{\partial B_{r_0}\setminus\Sigma}\langle N(N+1)u_\infty,\psi\rangle dV_g&=\lim_{k\to\infty}\int_{\partial B_{r_0}\setminus\Sigma}\langle N(N+1)u_k,\psi\rangle dV_{g_k}\\
         &=\lim_{k\to\infty}\int_{\phi_k(K')}\langle N(N+1)\hat{f}_k,(\widetilde{\phi}_k^{-1})^*\psi\rangle dV_{g}.
    \end{align*}

    On the other hand,
    \begin{align*}
        -\int_{\partial B_{r_0}\setminus\Sigma}\langle r_0^2\nabla_gu_\infty,\nabla_g\psi\rangle dV_g&=-\lim_{k\to\infty}\int_{\partial B_{r_0}\setminus\Sigma}\langle r_0^2\nabla_gu_k,\nabla_g\psi\rangle dV_g\\
        &=\lim_{k\to\infty}\bigg(-\int_{\partial B_{r_0}\setminus\Sigma}\langle r_0^2\nabla_{g_k}u_k,\nabla_{g_k}\psi\rangle dV_{g_k}\\&\ \ \ \ +\int_{\partial B_{r_0}\setminus\Sigma}\langle r_0^2\nabla_{g_k}u_k,\nabla_{g_k}\psi\rangle dV_{g_k}-\int_{\partial B_{r_0}\setminus\Sigma}\langle r_0^2\nabla_gu_k,\nabla_g\psi\rangle dV_g\bigg).
    \end{align*}

   Similarly, since $W^{1,2}$ norm of $u_k$ is bounded, we have 
   \[\lim_{k\to\infty}|\int_{\partial B_{r_0}\setminus\Sigma}\langle r_0^2\nabla_{g_k}u_k,\nabla_{g_k}\psi\rangle dV_{g_k}-\int_{\partial B_{r_0}\setminus\Sigma}\langle r_0^2\nabla_gu_k,\nabla_g\psi\rangle dV_g|=0.\]

   Thus we have 
   \begin{align*}
       -\int_{\partial B_{r_0}\setminus\Sigma}\langle r_0^2\nabla_gu_\infty,\nabla_g\psi\rangle dV_g&=-\lim_{k\to \infty}\int_{\partial B_{r_0}\setminus\Sigma}\langle r_0^2\nabla_{g_k}u_k,\nabla_{g_k}\psi\rangle dV_{g_k}\\
       &=-\lim_{k\to \infty}\int_{\phi_k(K')}\langle r_0^2\nabla_{g}\hat{f}_k,\nabla_{g}((\widetilde{\phi}_k^{-1})^*\psi)\rangle dV_{g}.
   \end{align*}

 Combining this with the previous equation, we have
 \begin{align*}
       &\int_{\partial B_{r_0}\setminus\Sigma}\langle N(N+1)u_\infty,\psi\rangle dV_g-\int_{\partial B_{r_0}\setminus\Sigma}\langle r_0^2\nabla_gu_\infty,\nabla_g\psi\rangle dV_g\\
       &=\lim_{k\to \infty}\int_{\phi_k(K')}\langle N(N+1)\hat{f}_k+r_0^2\Delta_{r_0}\hat{f}_k,(\widetilde{\phi}_k^{-1})^*\psi\rangle dV_g\\
       &=\lim_{k\to \infty}\int_{\partial B_{r_0}\setminus \Sigma_{\lambda_k}}\langle N(N+1)\hat{f}_k+r_0^2\Delta_{r_0}\hat{f}_k,(\widetilde{\phi}_k^{-1})^*\psi\rangle dV_g.
 \end{align*}

  Since $f_{\lambda_k,r_0}$ satisfies the Laplace equation on $\mathbb{R}^3\setminus\Sigma_{\lambda_k}$, by the relation of $\Delta_{\mathbb{R}^3}$ and $\Delta_{r_0}$, on the points in $\mathbb{R}^3\setminus\Sigma_{\lambda_k}$ we have 
  \[0=r_0^2\Delta_{\mathbb{R}^3}f_{\lambda_k,r_0}=r_0^2\frac{\partial ^2f_{\lambda_k,r_0}}{\partial r^2}+2r_0\frac{\partial f_{\lambda_k,r_0}}{\partial r}+r_0^2\Delta_{r_0}\hat{f}_k.\]

  Then, by Corollary \ref{Corhyper} we have 
  \begin{align*}
       &|\int_{\partial B_{r_0}\setminus\Sigma}\langle N(N+1)u_\infty,\psi\rangle dV_g-\int_{\partial B_{r_0}\setminus\Sigma}\langle r_0^2\nabla_gu_\infty,\nabla_g\psi\rangle dV_g|\\ &=\lim_{k\to \infty}|\int_{\partial B_{r_0}\setminus \Sigma_{\lambda_k}}\langle N(N+1)\hat{f}_k+r_0^2\Delta_{r_0}\hat{f}_k,(\widetilde{\phi}_k^{-1})^*\psi\rangle dV_g|\\
       &=\lim_{k\to \infty}|\int_{\partial B_{r_0}\setminus \Sigma_{\lambda_k}}\langle N(N+1)f_{\lambda_k,r_0}-r_0^2\frac{\partial ^2f_{\lambda_k,r_0}}{\partial r^2}-2r_0\frac{\partial f_{\lambda_k,r_0}}{\partial r},(\widetilde{\phi}_k^{-1})^*\psi\rangle dV_g|\\
       &\leq \lim_{k\to \infty}\bigg(\| N(N-1)f_{\lambda_k,r_0}-r_0^2\frac{\partial ^2f_{\lambda_k,r_0}}{\partial r^2}\|_{L^2(\partial B_{r_0}\setminus \Sigma_{\lambda_k})}\|(\widetilde{\phi}_k^{-1})^*\psi\|_{L^2(\partial B_{r_0}\setminus \Sigma_{\lambda_k})}\\&+2\| Nf_{\lambda_k,r_0}-r_0\frac{\partial f_{\lambda_k,r_0}}{\partial r}\|_{L^2(\partial B_{r_0}\setminus \Sigma_{\lambda_k})}\|(\widetilde{\phi}_k^{-1})^*\psi\|_{L^2(\partial B_{r_0}\setminus \Sigma_{\lambda_k})}\bigg)=0
  \end{align*}

  Since $u_{\infty}$ satisfies the above equation for any test section $\psi\in C^\infty_c(\partial B_{r_0} \setminus \Sigma, \mathcal{I})$, $u_{\infty}$ satisfies the eigensection equation weakly with eigenvalue $\frac{N(N+1)}{r_0^2}$. Since $u_{\infty}$ can be viewed as an ordinary function locally at every point outside $\Sigma$ after fixing a local trivialization of $\mathcal{I}$, standard elliptic regularity implies that $u_\infty$ is smooth on
   $\partial B_{r_0}\setminus\Sigma$. Thus it is a strong solution.
  
  It remains to show that $u_{\infty}$ is critical. We first give a $L^2$ estimate for Laplacian operator among `critical' $\mathbb{Z}_2$ functions:

  \begin{lemma}\label{L2estimate}
      Let $M$ be a $n$-dimensional($n\geq 2$) compact Riemannian manifold and $\Sigma \subset M$ be a branching set of codimension $2$. Suppose $f$ is a critical smooth $\mathbb{Z}_2$ function on $M$, meaning that near any branching point $p \in \Sigma$, $f$ admits the asymptotic estimates\begin{align*}
       |f(x)| &\le C r^{3/2}, \\
       |\nabla f(x)| &\le C r^{1/2}, \\
       |\nabla^2 f(x)| &\le C r^{-1/2}.
      \end{align*} 
where $r=\operatorname{dist}(x,\Sigma)$ and $x\in B_\epsilon(p)$ for a small $\epsilon$.  Then there exists a uniform $L^2$ Laplace estimate:
\begin{align*}
    \| \nabla^2 f \|_{L^2(M\setminus\Sigma)} \le C \left( \| f \|_{L^2(M\setminus\Sigma)} + \| \Delta f \|_{L^2(M\setminus\Sigma)} \right),
\end{align*}
where \(C\) depends only on the background geometry of \(M\).
  \end{lemma}

  \begin{proof}
      We proceed by using a cut-off function argument to justify the integration by parts around the singular set $\Sigma$.

    Since the area element in dimension $n$ is \(dV\sim r\,dr\,d\theta\,dV_\Sigma.\), it is straightforward to verify that $|\nabla^2 f| \in L^2(M\setminus\Sigma)$, as $\int_0^\epsilon (r^{-1/2})^2 r \, dr = \epsilon < \infty$.

     Choose a family of smooth cut-off functions $\eta_\epsilon \in C_0^\infty(M \setminus \Sigma)$ such that $\eta_\epsilon = 1$ for $r \ge \epsilon$, $\eta_\epsilon = 0$ for $r \le \epsilon/2$, and $|\nabla \eta_\epsilon| \le C/\epsilon$ on the annular region $A_\epsilon = \{ \epsilon/2 \le r \le \epsilon \}$.

      We choose a  local orthonormal frame $\{e_i\}$ on $M$ and write $f_i=\nabla_{e_i}f$ and $f_{ij}=\nabla_{e_i}\nabla_{e_j}f$. We consider the $L^2$ norm of the Hessian multiplied by the cut-off function:
      \begin{align*}
     \int_M \eta_\epsilon |\nabla^2 f|^2 =\sum_{i,j} \int_M \eta_\epsilon f_{ij} f_{ij}.
   \end{align*}
    Integrating by parts yields:
    \begin{align*}
    \int_M \eta_\epsilon f_{ij} f_{ij} = - \int_M \nabla_i \eta_\epsilon f_{ij} f_j - \int_M \eta_\epsilon \nabla_i f_{ij} f_j.
    \end{align*}
   Applying the Ricci identity to commute the covariant derivatives, we obtain $\sum_i\nabla_i f_{ij} = \nabla_j \Delta f + \sum_k\operatorname{Ric}_{jk}f_k$. Substituting this into the equation, we obtain:
  \begin{align*}
    -\sum_i \int_M \eta_\epsilon \nabla_i f_{ij} f_j = - \int_M \eta_\epsilon (\nabla_j \Delta f + \sum_k\operatorname{Ric}_{jk}f_k) f_j,
  \end{align*}
  where $\operatorname{Ric}_{jk}$ is the Ricci tensor. Integrating by parts again on the term involving $\Delta f$:
  \begin{align*}
    -\sum_j \int_M \eta_\epsilon \nabla_j (\Delta f) f_j = \sum_j\int_M \nabla_j (\eta_\epsilon f_j) \Delta f = \sum_j\int_M \nabla_j \eta_\epsilon f_j \Delta f + \int_M \eta_\epsilon (\Delta f)^2.
  \end{align*}
   Combining these equations, we arrive at:
  \begin{align} \label{eq:bochner_cutoff}
    \int_M \eta_\epsilon |\nabla^2 f|^2 = \int_M \eta_\epsilon (\Delta f)^2 -\sum_{i,j} \int_M \eta_\epsilon \operatorname{Ric}_{ij} f_i f_j + E_\epsilon,
   \end{align}
   where the boundary error term $E_\epsilon$ is supported strictly on $A_\epsilon$:
  \begin{align*}
    E_\epsilon = \sum_j\int_{A_\epsilon} \nabla_j \eta_\epsilon f_j \Delta f - \sum_{i,j}\int_{A_\epsilon} \nabla_i \eta_\epsilon f_{ij} f_j.
  \end{align*}

    We estimate the absolute value of $E_\epsilon$ as $\epsilon \to 0$. Using the estimates of derivatives of $f$ and the properties of $\eta_\epsilon$, we have on $A_\epsilon$:
   \begin{align*}
    |E_\epsilon| \le C(\int_{A_\epsilon} |\nabla \eta_\epsilon| |\nabla f| |\Delta f| + \int_{A_\epsilon} |\nabla \eta_\epsilon| |\nabla^2 f| |\nabla f|).
    \end{align*}
    Since $|\nabla \eta_\epsilon| \le C \epsilon^{-1}$, $|\nabla f| \le C \epsilon^{1/2}$, and $|\nabla^2 f| , |\Delta f| \le C \epsilon^{-1/2}$, the integrand is uniformly bounded by:
   \begin{align*}
   |\nabla \eta_\epsilon| |\nabla f| |\Delta f|+ |\nabla \eta_\epsilon| |\nabla f| |\nabla^2 f| \le (C \epsilon^{-1}) (C \epsilon^{1/2}) (C \epsilon^{-1/2}) \le \frac{C'}{\epsilon}.
    \end{align*}
    The measure of the annulus $A_\epsilon$ is $\mathrm{Vol}(A_\epsilon) \sim O(\epsilon^2)$. Therefore:
    \begin{align*}
    |E_\epsilon| \le \frac{C'}{\epsilon} \cdot \epsilon^2 = C'\epsilon.
     \end{align*}
    As $\epsilon \to 0$, $E_\epsilon \to 0$. By the Dominated Convergence Theorem, equation \eqref{eq:bochner_cutoff} descends to a global integral identity on $M$:
   \begin{align} \label{eq:global_bochner}
    \int_{M\setminus\Sigma} |\nabla^2 f|^2 = \int_{M\setminus\Sigma} (\Delta f)^2 - \sum_{i,j}\int_{M\setminus\Sigma} \operatorname{Ric}_{ij} f_i f_j.
   \end{align}

   Since $M$ is compact, the Ricci curvature satisfies $|\operatorname{Ric}_{ij}| \le K$. Thus:
   \begin{align*}
    \left| \int_{M\setminus\Sigma} \operatorname{Ric}_{ij} f_i f_j \right| \le K \int_{M\setminus\Sigma} |\nabla f|^2.
   \end{align*}
    To bound the $L^2$ norm of the gradient, we perform a similar cut-off integration by parts for $\int_M |\nabla f|^2$:
    \begin{align*}
    \int_M \eta_\epsilon |\nabla f|^2 = - \int_M \eta_\epsilon f \Delta f - \int_M \nabla \eta_\epsilon \cdot \nabla f \, f.
    \end{align*}
    The corresponding error term on $A_\epsilon$ is bounded by:
    \begin{align*}
    \int_{A_\epsilon} \epsilon^{-1} \cdot \epsilon^{1/2} \cdot \epsilon^{3/2} \, dV \sim \epsilon \cdot O(\epsilon^2) = O(\epsilon^{3}) \to 0.
    \end{align*}
     Taking the limit $\epsilon \to 0$, we get:
     \begin{align*}
    \int_{M\setminus\Sigma} |\nabla f|^2 = - \int_{M\setminus\Sigma} f \Delta f.
     \end{align*}
    By the Cauchy-Schwarz inequality,
     \begin{align*}
    \int_{M\setminus\Sigma} |\nabla f|^2 \le \frac{1}{2} \int_{M\setminus\Sigma} |f|^2 + \frac{1}{2} \int_{M\setminus\Sigma} |\Delta f|^2.
     \end{align*}
     Substituting this gradient bound back into \eqref{eq:global_bochner}, we conclude:
    \begin{align*}
    \int_{M\setminus\Sigma} |\nabla^2 f|^2 &\le C\int_{M\setminus\Sigma} (\Delta f)^2 + CK \int_{M\setminus\Sigma} |\nabla f|^2 \\
    &\le C \int_{M\setminus\Sigma} (\Delta f)^2 + \frac{CK}{2} \left( \int_{M\setminus\Sigma} |f|^2 + \int_{M\setminus\Sigma} |\Delta f|^2 \right) \\
    &\le C'' \left( \int_{M\setminus\Sigma} |f|^2 + \int_{M\setminus\Sigma} |\Delta f|^2 \right),
    \end{align*}
     where $C'' = C\max\left(1 + \frac{K}{2}, \frac{K}{2}\right)$. This completes the proof.
  \end{proof}
  
  By the discussion in Section 3.2.1, we can extend $u_\infty$ to $u$, where $u$ is a homogeneous $\mathbb{Z}_2$ harmonic function on $\mathbb{R}^3$ whose branching set is singular $\Sigma_\infty=\{\frac{x_1^2}{h_1^2}-\frac{x_3^2}{h_3^2}=0,x_2=0\}$ and $u|_{\partial B_{r_0}}=u_\infty$. Moreover, locally at every point $p\in\Sigma$, taking a suitable complex coordinate $z$ on a small slice through $p$ transverse to $\Sigma_\infty$ we obtain local expansion formula for $u$:
  \[u(x)= \operatorname{Re}(A_pz^{1/2})+O(r^{3/2}),\]
  where $r=|x-p|$ and $x\in B_\epsilon(p)$ for a small $\epsilon$.

  Suppose there exists a $p\in \Sigma$ such that $A_p\neq 0$. Then there exists a constant $A$, such that 
  \begin{align*}
      \int_{B_\epsilon(p)\setminus B_{\epsilon/2}(p)} |\nabla_g^2u_\infty|^2dV_g>A\int_{\epsilon/2}^\epsilon\epsilon^{-3}\cdot \epsilon dr=A\epsilon^{-1}
  \end{align*}
  holds for all small $\epsilon.$

  Thus $u_\infty\notin W^{2,2}(\mathbb{S}^2\setminus\Sigma,\mathcal{I})$. However, combining the $L^2$ estimate for Laplacian operator among critical $\mathbb{Z}_2$ functions, we can show that $u_k\to u_\infty$ weakly in $W^{2,2}(\mathbb{S}^2\setminus\Sigma,\mathcal{I})$ which leads to a contradiction. Thus $u_\infty$ is critical eigensection of $\mathcal{I}$.

 Since $f_{\lambda_k,r_0}$ is a $\mathbb{Z}_2$ harmonic function, we obtain 
 \[\Delta_{r_0}\hat{f}_k=-\frac{2}{r_0}\frac{\partial f_{\lambda_k,r_0}}{\partial r}-\frac{\partial^2 f_{\lambda_k,r_0}}{\partial r^2}.\]

 By Lemma \ref{L2estimate}, we obtain 
 \begin{align*}
     \int_{\partial B_{r_0}\setminus\Sigma_{\lambda_k}} |\nabla_g^2 \hat{f}_k|^2dV_g&\leq C(\int_{\partial B_{r_0}\setminus\Sigma_{\lambda_k}} |\hat{f}_k|^2 dV_g+ \int_{\partial B_{r_0}\setminus\Sigma_{\lambda_k}} |r_0^2\Delta_{\partial B_{r_0}} \hat{f}_k|^2dV_g)\\
     &=C(\int_{\partial B_{r_0}\setminus\Sigma_{\lambda_k}} |\hat{f}_k|^2 dV_g+ \int_{\partial B_{r_0}\setminus\Sigma_{\lambda_k}} |2r_0\frac{\partial f_{\lambda_k,r_0}}{\partial r}+r_0^2\frac{\partial^2 f_{\lambda_k,r_0}}{\partial r^2}|^2dV_g)\\
     &\leq C\bigg(C_N\int_{\partial B_{r_0}\setminus\Sigma_{\lambda_k}} |\hat{f}_k|^2 dV_g\\&+ \int_{\partial B_{r_0}\setminus\Sigma_{\lambda_k}} (|r_0\frac{\partial f_{\lambda_k,r_0}}{\partial r}-Nf_{\lambda_k,r_0}|^2+|r_0^2\frac{\partial^2 f_{\lambda_k,r_0}}{\partial r^2}-N(N-1)f_{\lambda_k,r_0}|^2)dV_g\bigg)\\
     &=C\bigg(C_N+ \\&\quad \int_{\partial B_{r_0}\setminus\Sigma_{\lambda_k}} (|r_0\frac{\partial f_{\lambda_k,r_0}}{\partial r}-Nf_{\lambda_k,r_0}|^2+|r_0^2\frac{\partial^2 f_{\lambda_k,r_0}}{\partial r^2}-N(N-1)f_{\lambda_k,r_0}|^2)dV_g\bigg).
 \end{align*}

By Corollary \ref{Corhyper}, we obtain 
\[\lim\limits_{k\to\infty}\int_{\partial B_{r_0}\setminus\Sigma_{\lambda_k}} (|r_0\frac{\partial f_{\lambda_k,r_0}}{\partial r}-Nf_{\lambda_k,r_0}|^2+|r_0^2\frac{\partial^2 f_{\lambda_k,r_0}}{\partial r^2}-N(N-1)f_{\lambda_k,r_0}|^2)dV_g=0.\]

Thus there exists a uniform $C'$ such that 
\[\int_{\partial B_{r_0}\setminus\Sigma_{\lambda_k}} |\nabla_g^2 \hat{f}_k|^2dV_g\leq C(C_N+C').\]

This implies that $\|\nabla_g^2 \hat{f}_k\|_{L^2(\partial B_{r_0}\setminus\Sigma_{\lambda_k},\mathcal{I})}$ is uniformly bounded for all $k$.

Recall that $u_k = \widetilde{\phi}_k^* \hat{f}_k$ and  $\widetilde{\phi}_k$ locally preserves the connection, we obtain $|\nabla_{g_k}^2 u_k|_{g_k}^2(x) = |\nabla_g^2 \hat{f}_k|_g^2(\phi_k(x))$ on any compact subset of $\partial B_{r_0}\setminus\Sigma$. Thus by the similar argument before, we obtain 
\begin{align*}
    \int_{K_m}|\nabla_{g_k}^2u_k|_{g_k}^2dV_{g_k}=\int_{\phi_k(K_m)}|\nabla_g^2 \hat{f}_k|_g^2dV_g\leq \int_{\partial B_{r_0}\setminus\Sigma_{\lambda_k}}|\nabla_g^2 \hat{f}_k|_g^2dV_g.
\end{align*}

For any point $x\in K_m$, we take local coordinate $(x_1,x_2)$ on $x\in U_x$ and let $\partial_i=\partial/\partial x_i$. Denote the Christoffel symbol of Riemannian metric $g$ by $(\Gamma_g)_{ij}^k$.

Thus locally on $U_x$, we obtain $$(\nabla_g^2 u_k)_{ij} - (\nabla_{g_k}^2 u_k)_{ij} = \left( (\Gamma_{g_k})_{ij}^m - (\Gamma_g)_{ij}^m \right) \partial_m u_k.$$

Since $\phi_k\to \operatorname{id}$ in $C^\infty$, we obtain   $(\Gamma_{g_k})_{ij}^m \to (\Gamma_g)_{ij}^m$ uniformly when $k\to\infty$, which is independent of $K_m$ and  $U_x$. Thus, there exists a constant $C$ independent of $K_m$ and $k$ such that 
\begin{align*}
    \int_{K_m}|\nabla_{g}^2u_k|_{g}^2dV_{g}&\leq 2\Lambda_k(\int_{K_m}|\nabla_{g_k}^2u_k|_{g_k}^2dV_{g_k}+\frac{C}{\Lambda_k}\int_{K_m}|\nabla_gu_k|_g^2dV_g)\\
    &\leq C'(\int_{\partial B_{r_0}\setminus\Sigma_{\lambda_k}}|\nabla_g^2 \hat{f}_k|_g^2dV_g+\int_{\partial B_{r_0}\setminus\Sigma}|\nabla_g u_k|_g^2dV_g).
\end{align*}

Since both $\|\nabla_g^2 \hat{f}_k\|_{L^2(\partial B_{r_0}\setminus\Sigma_{\lambda_k},\mathcal{I})}$ and $\|\nabla_g u_k\|_{L^2(\partial B_{r_0}\setminus\Sigma,\mathcal{I})}$ are uniformly bounded for all $k$. $\|\nabla_g^2 u_k\|_{L^2(K_m)}$ is uniformly bounded for all $k$ and the bound is independent of $K_m$. By Monotone Convergence Theorem again, we obtain $\|\nabla_g^2 u_k\|_{L^2(\partial B_{r_0}\setminus\Sigma,\mathcal{I})}$ is uniformly bounded. Thus $u_k$ is uniformly bounded in $W^{2,2}(\partial B_{r_0}\setminus\Sigma,\mathcal{I})$. By Banach-Alaoglu theorem again, we can obtain that there exists a subsequence (still denote as $u_k$) such that $u_k$ actually converges to $u_\infty$ weakly in $W^{2,2}(\partial B_{r_0}\setminus\Sigma,\mathcal{I})$. It leads to a contradiction if $u_\infty$ is not critical. Then we finish the proof.

\end{proof}

\begin{remark}
   A crucial step in the proof involves imposing a reasonable growth condition on the $\mathbb{Z}_2$ harmonic function to ensure that our rescaled sequence admits a uniform $W^{1,2}$ bound. After considerable effort, I drew inspiration from the frequency function of harmonic functions, which partially characterizes their asymptotic growth rate at infinity. Furthermore, another inherent difficulty of this problem is that $\mathbb{Z}_2$ harmonic functions with different branching sets are essentially sections of distinct line bundles, making it impossible to define convergence directly. To overcome this, we pull back all these line bundles to a common reference bundle for analysis. However, this approach is currently restricted to cases where the branching sets are mutually diffeomorphic. For more general topological changes in the branching sets, a complete resolution remains elusive, and the author hopes to pursue further research in this direction in the future.
\end{remark}

%% file: ParabolicBranchingSet.tex
\section{Parabolic Branching Sets}
\subsection{Construction}
In this section, we construct a nondegenerate $\mathbb{Z}_2$ harmonic function whose branching set is a parabola $\Gamma_{b}:=\left\{(x_1,x_2,x_3)\in\mathbb{R}^3\mid
        x_{3}=0,\quad
        2bx_{1}=x_{2}^{2}
    \right\}$, using paraboloidal coordinates. Analogous to how we partition space using confocal ellipsoids and hyperboloids in ellipsoidal coordinates, parabolic coordinates partition space via confocal paraboloids.

We first recall the standard paraboloidal coordinates in
$\mathbb{R}^{3}$. Fix two real constants
\[
    \alpha<\beta.
\]
For each $\lambda\in\mathbb{R}\setminus\{\alpha,\beta\}$,
consider the quadric hypersurface
\begin{equation*}
    \mathcal{C}_{\lambda}:
    \frac{x_{2}^{2}}{\lambda-\alpha}
    +
    \frac{x_{3}^{2}}{\lambda-\beta}
    =
    2x_{1}+\lambda.
\end{equation*}
The hypersurfaces $\mathcal{C}_{\lambda}$ form a confocal family
of paraboloids and hyperbolic paraboloids.

For a fixed point
$\mathbf{x}=(x_{1},x_{2},x_{3})\in\mathbb{R}^{3}$,
define
\begin{equation*}
    H_{\mathbf{x}}(\lambda)
    :=
    \lambda+2x_{1}
    -
    \frac{x_{2}^{2}}{\lambda-\alpha}
    -
    \frac{x_{3}^{2}}{\lambda-\beta}.
\end{equation*}
 Since
\[
    H_{\mathbf{x}}'(\lambda)
    =
    1+
    \frac{x_{2}^{2}}{(\lambda-\alpha)^{2}}
    +
    \frac{x_{3}^{2}}{(\lambda-\beta)^{2}}
    >0,
\]
the equation $H_{\mathbf{x}}(\lambda)=0$ has precisely one root
in each of the intervals if $x_2x_3\neq 0$
\[
    (-\infty,\alpha),\qquad
    (\alpha,\beta),\qquad
    (\beta,\infty).
\]
We denote these roots by $\lambda_{1},\lambda_{2},\lambda_{3}$,
respectively, so that
\begin{equation*}
    \lambda_{1}<\alpha<\lambda_{2}<\beta<\lambda_{3}.
\end{equation*}
The functions $(\lambda_{1},\lambda_{2},\lambda_{3})$ define the
paraboloidal coordinates of $\mathbf{x}$. At points
 where \(x_2x_3=0\), the coordinates are defined by continuous
 extension.

Equivalently, $\lambda_{1},\lambda_{2},\lambda_{3}$ are the three
roots of the cubic polynomial
\begin{equation*}
\begin{aligned}
    Q_{\mathbf{x}}(\lambda)
    &:=
    (\lambda-\alpha)(\lambda-\beta)
    H_{\mathbf{x}}(\lambda).
\end{aligned}
\end{equation*}

By comparing the coefficients of
\[
    Q_{\mathbf{x}}(\lambda)
    =
    \prod_{j=1}^{3}(\lambda-\lambda_{j}),
\]
and by evaluating this identity at $\lambda=\alpha$ and
$\lambda=\beta$, we obtain
\begin{equation*}
\begin{cases}
    x_{1}
    &=
    \dfrac{
        \alpha+\beta
        -\lambda_{1}-\lambda_{2}-\lambda_{3}
    }{2},\\
    x_{2}^{2}
    &=
    \dfrac{
        \prod_{j=1}^{3}(\alpha-\lambda_{j})
    }{\beta-\alpha}
    ,\\
    x_{3}^{2}
    &=
    -\dfrac{
        \prod_{j=1}^{3}(\beta-\lambda_{j})
    }{\beta-\alpha}.
\end{cases}
\end{equation*}

We next verify that the coordinates are orthogonal. The normal
vector to $\mathcal{C}_{\lambda}$ is
\[
    N_{\lambda}
    =
    \left(
        2,
        -\frac{2x_{2}}{\lambda-\alpha},
        -\frac{2x_{3}}{\lambda-\beta}
    \right).
\]
If $\lambda_i\neq\lambda_j$ are two roots of
$H_{\mathbf{x}}(\lambda)=0$, we obtain that 
\[
    \langle N_{\lambda_i},N_{\lambda_j}\rangle
    =
    4\left(
    1+
    \frac{x_{2}^{2}}
    {(\lambda_i-\alpha)(\lambda_j-\alpha)}
    +
    \frac{x_{3}^{2}}
    {(\lambda_i-\beta)(\lambda_j-\beta)}
    \right)
    =0.
\]
Thus $(\lambda_1,\lambda_2,\lambda_3)$ form an orthogonal
coordinate system.

In these coordinates, the Euclidean metric is
\begin{equation*}
    \widetilde{g}
    =
    \sum_{i=1}^{3}
    \frac{
        \displaystyle\prod_{j\neq i}(\lambda_i-\lambda_j)
    }{
        4(\lambda_i-\alpha)(\lambda_i-\beta)
    }
    \,d\lambda_i^{2}.
\end{equation*}

We now specialize the preceding construction by taking
\[
    \alpha=-b,\qquad \beta=0,
\]
where $b>0$. The defining equation
becomes
\begin{equation*}
    \lambda+2x_{1}
    -
    \frac{x_{2}^{2}}{\lambda+b}
    -
    \frac{x_{3}^{2}}{\lambda}
    =0.
\end{equation*}
And we obtain the cubic polynomial
\begin{equation}\label{eq:special-paraboloidal-cubic}
\begin{aligned}
    Q_{\mathbf{x}}(\lambda)=
    \lambda^{3}
    +(b+2x_{1})\lambda^{2}
    +(2bx_{1}-x_{2}^{2}-x_{3}^{2})\lambda
    -bx_{3}^{2}.
\end{aligned}
\end{equation}

The three roots satisfy
\[
    \lambda_{1}<-b<\lambda_{2}<0<\lambda_{3}.
\]
We introduce modified variables
 by
\begin{equation*}
    \lambda_{1}=-b-\mu_1^{2},\qquad
    \lambda_{2}=-\mu_2^{2},\qquad
    \lambda_{3}=\mu_3^{2},
\end{equation*}
where
\[
    \mu_1,\mu_3\in\mathbb{R},
    \qquad
    |\mu_2|<\sqrt{b}.
\]
Thus, we have 
\begin{equation}\label{eq:modified-paraboloidal-coordinates}
\begin{cases}
    x_{1}
    &=
    \dfrac{\mu_1^{2}+\mu_2^{2}-\mu_3^{2}}{2},\\
    x_{2}
    &=
    \mu_1\sqrt{
        \dfrac{(b-\mu_2^{2})(b+\mu_3^{2})}{b}
    },\\
    x_{3}
    &=
    \mu_2\mu_3
    \sqrt{
        \dfrac{b+\mu_1^{2}}{b}
    }.
\end{cases}
\end{equation}
As before, we call $\mathbf{x}(\mu_1,\mu_2,\mu_3)$ the modified paraboloidal coordinate and we can see that
\[\mathbf{x}(\mu_1,\mu_2,\mu_3)=\mathbf{x}(\mu_1,-\mu_2,-\mu_3).\]
It
therefore represents the deck transformation of the associated
double cover.

Moreover,
\begin{equation*}
\begin{aligned}
    \{\mu_2=\mu_3=0\}
    &=
    \left\{
        \left(
            \frac{s^{2}}{2},
            s\sqrt{b},
            0
        \right)
        :s\in\mathbb{R}
    \right\}\\
    &=
    \left\{
        x_{3}=0,\quad
        2bx_{1}=x_{2}^{2}
    \right\}
    =:\Gamma_{b}.
\end{aligned}
\end{equation*}

Then, we obtain
\begin{equation}\label{eq:modified-paraboloidal-metric}
    \widetilde{g}
    =
    \frac{(b+\mu_1^{2}-\mu_2^{2})(b+\mu_1^{2}+\mu_3^{2})}{b+\mu_1^{2}}\,d\mu_1^{2}
    +
    \frac{(b+\mu_1^{2}-\mu_2^{2})(\mu_2^{2}+\mu_3^{2})}{b-\mu_2^{2}}\,d\mu_2^{2}
    +
    \frac{(b+\mu_1^{2}+\mu_3^{2})(\mu_2^{2}+\mu_3^{2})}{b+\mu_3^{2}}\,d\mu_3^{2}.
\end{equation}

\begin{proposition}\label{prop2}
	If we let
	\[z=\frac{\sqrt{b+\mu_1^2}}{2\sqrt{b}}(\mu_2^2-\mu_3^2+2\sqrt{-1}\mu_2\mu_3),\]
	then $z=\zeta+O(r^2)$, where $\zeta$ is the inverse of the exponential map from the tubular neighborhood to the normal bundle and r is the distance of a point in the tubular neighborhood of $\Gamma_{b}$ to $\Gamma_{b}$(i.e. $r=|\zeta|$).
\end{proposition}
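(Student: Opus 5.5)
We follow the proof of Proposition~\ref{prop1}. The curve $\Gamma_b$ is cut out by the two functions
\[
F:=\frac{2bx_1-x_2^2}{2b}=x_1-\frac{x_2^2}{2b},\qquad x_3 .
\]
Their gradients $\nabla F=(1,-x_2/b,0)$ and $\nabla x_3=e_3$ are orthogonal along $\Gamma_b$ (indeed everywhere), so they span the normal plane. It is standard that, up to orientation, the complex normal coordinate satisfies
\[
\frac{F}{|\nabla F|}+\sqrt{-1}\,x_3=\zeta+O(r^2).
\]
The reason is that $F/|\nabla F|$ and $x_3$ are defining functions whose differentials restrict to an orthonormal coframe of $N_{\Gamma_b}$ along $\Gamma_b$. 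The exponential-map coordinate agrees with any such pair to first order, and the error is quadratic in $r$. So it suffices to expand $F/|\nabla F|$ and $x_3$ in the modified paraboloidal coordinates to second order in $(\mu_2,\mu_3)$, uniformly in $\mu_1$ on compact sets. Note that $\mu_2^2+\mu_3^2$ is comparable to $r$, since the metric \eqref{eq:modified-paraboloidal-metric} near $\mu_2=\mu_3=0$ is comparable to $\frac{b+\mu_1^2}{b}(\mu_2^2+\mu_3^2)(d\mu_2^2+d\mu_3^2)$.

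The imaginary part is immediate from \eqref{eq:modified-paraboloidal-coordinates}:
\[
x_3=\mu_2\mu_3\sqrt{(b+\mu_1^2)/b},
\]
which equals $\frac{\sqrt{b+\mu_1^2}}{2\sqrt b}\cdot 2\mu_2\mu_3$ exactly.

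For the real part, compute $F$ directly:
\[
F=\frac{\mu_1^2+\mu_2^2-\mu_3^2}{2}-\frac{\mu_1^2(b-\mu_2^2)(b+\mu_3^2)}{2b^2}.
\]
Expanding the product, this becomes
\[
F=\frac{\mu_2^2-\mu_3^2}{2}+\frac{\mu_1^2(\mu_2^2-\mu_3^2)}{2b}+\frac{\mu_1^2\mu_2^2\mu_3^2}{2b^2}
=\frac{b+\mu_1^2}{2b}(\mu_2^2-\mu_3^2)+O(r^2).
\]
For the gradient,
\[
|\nabla F|=\sqrt{1+x_2^2/b^2}=\sqrt{1+\mu_1^2(b-\mu_2^2)(b+\mu_3^2)/b^3}=\sqrt{\frac{b+\mu_1^2}{b}}+O(r).
\]
Dividing gives
\[
\frac{F}{|\nabla F|}=\frac{\sqrt{b+\mu_1^2}}{2\sqrt b}(\mu_2^2-\mu_3^2)+O(r^2),
\]
because $F=O(r)$, so the $O(r)$ error in $|\nabla F|$ contributes only $O(r^2)$. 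Combining the real and imaginary parts gives $z=\zeta+O(r^2)$.

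The main point needing care is justifying the first-order identification $\zeta=F/|\nabla F|+\sqrt{-1}x_3+O(r^2)$. Since $\Gamma_b$ is noncompact, the $O(r^2)$ bounds are only locally uniform along $\Gamma_b$, i.e.\ for $\mu_1$ in compact sets. We also need the orientation and sign convention of the normal complex structure to match, so that we get $\mu_2^2-\mu_3^2$ rather than its negative; this is fixed by the choice of co-orientation, exactly as in Proposition~\ref{prop1}.
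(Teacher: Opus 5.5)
Your proposal is correct and follows the paper's proof essentially verbatim. You use the same defining functions (your $F$ is the paper's $2bx_1-x_2^2$ divided by $2b$, which leaves $F/|\nabla F|$ unchanged), the same first-order identification $\zeta=F/|\nabla F|+\sqrt{-1}x_3+O(r^2)$, and the same expansion of $F$ and $|\nabla F|$ in $(\mu_2,\mu_3)$. You also justify that identification and the comparability $\mu_2^2+\mu_3^2\sim r$ via the metric, which the paper simply asserts.
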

\begin{proof}
	Since the defining functions for $\Gamma_{b}$ are $F=2bx_{1}-x_{2}^{2} $ and $x_3$, we have 
	\[\frac{F}{|\nabla F|}+\sqrt{-1}x_3=\zeta+O(r^2).\]
	Since $x_{3}=\mu_2\mu_3\sqrt{\frac{b+\mu_1^{2}}{b}}$, it suffices to calculate the local expansion of $\frac{F}{|\nabla F|}$.
	
	By direct calculation and noticing that $\mu_2^2+\mu_3^2$ is uniformly equivalent to $r$, we have 
    \begin{equation*}
    \begin{aligned}
    F
    &=
    2bx_1-x_2^2\\
    &=
    (b+\mu_1^2)(\mu_2^2-\mu_3^2)
    +\frac{\mu_1^2}{b}\mu_2^2\mu_3^2.
    \end{aligned}
    \end{equation*}

    On the other hand,
    \begin{align*}
    |\nabla F|&=2\sqrt{b^2+x_2^2}\\
    &=
    2\Bigg(
        b(b+\mu_1^2)
        +\mu_1^2(\mu_3^2-\mu_2^2)
        -\frac{\mu_1^2}{b}\mu_2^2\mu_3^2
    \Bigg)^{1/2} \notag\\
    &=
    2\sqrt{b(b+\mu_1^2)}
    -
    \frac{\mu_1^2}
    {\sqrt{b(b+\mu_1^2)}}
    (\mu_2^2-\mu_3^2)
    +O\left((\mu_2^2+\mu_3^2)^2\right).
  \end{align*}

	Thus, we have 
	\[\frac{F}{|\nabla F|}=\frac{\sqrt{b+\mu_1^2}}{2\sqrt{b}}(\mu_2^2-\mu_3^2)+O(r^2),\]
	which proves the remaining part of the proposition.
	
\end{proof}

\begin{remark}\label{remarkaboutpara}
	By Proposition \ref{prop2}, we can see that \[z^{1/2}=(\frac{b+\mu_1^2}{4b})^{1/4}(\mu_2+\sqrt{-1}\mu_3).\]
	Thus our goal is to find a harmonic function $f$ in the form $f(\mu_1,\mu_2,\mu_3)$ such that $$f(\mu_1,\mu_2,\mu_3)=-f(\mu_1,-\mu_2,-\mu_3)$$ and that 
	\[f= \operatorname{Re}\bigg(B(\mu_1)(\mu_2+\sqrt{-1}\mu_3)^{3}\bigg)+O(|z|^{5/2}),\]
    where $B(\mu_1)$ is a complex-valued function. Then it naturally descends to a nondegenerate $\mathbb{Z}_2$ harmonic function on $\mathbb{R}^3$ whose branching set is $\Gamma_b$. As in \cite{yan2025constructionnondegeneratemathbbz2harmonicfunctions}, we only need to derive a formula for the nondegenerate $\mathbb{Z}_2$ harmonic function on an open dense subset such that the modified paraboloidal coordinates will not degenerate and then extend it to the whole space.
\end{remark}

As before, we calculate the standard Laplacian in the modified paraboloidal coordinate. The Laplace operator takes the form
\begin{equation*}
\begin{aligned}
    \Delta_{\widetilde{g}}f
    ={}&
    \frac{
        (b+\mu_1^2)\partial_{\mu_1}^2f
        +\mu_1\partial_{\mu_1}f
    }{
        (b+\mu_1^2-\mu_2^2)
        (b+\mu_1^2+\mu_3^2)
    }\\
    &+
    \frac{
        (b-\mu_2^2)\partial_{\mu_2}^2f
        -\mu_2\partial_{\mu_2}f
    }{
        (b+\mu_1^2-\mu_2^2)
        (\mu_2^2+\mu_3^2)
    }\\
    &+
    \frac{
        (b+\mu_3^2)\partial_{\mu_3}^2f
        +\mu_3\partial_{\mu_3}f
    }{
        (b+\mu_1^2+\mu_3^2)
        (\mu_2^2+\mu_3^2)
    }.
\end{aligned}
\end{equation*}

Similarly if we separate the variables $f(\mu_1,\mu_2,\mu_3)=f_1(\mu_1)f_2(\mu_2)f_3(\mu_3)$ and apply Wick's rotation, then it suffices to solve several linear ODEs in the form
\[(b+\mu^2)\partial_{\mu}^2f+\mu\partial_{\mu}f=Q(\mu)f.\]

Since solving this ODE is entirely analogous to the previous hyperbolic case, we omit the detailed computations and present only the final construction along with its verification.

We define \begin{align*}
    f_1(\mu_3)
    &:=
    \int_0^{\mu_3}
    \frac{dv}{\sqrt{b+v^2}}
    ,\\
    f_2(\mu_3)
    &:=
    \int_0^{\mu_3}
    \frac{v^2\,dv}{\sqrt{b+v^2}},\\
    f_3(\mu_3)
    &:=
    \int_0^{\mu_3}
    \frac{dv}{
        (v^2+b/2)^2\sqrt{b+v^2}
    },\\
    P_b(x_1,x_2,x_3)
    &:=
    -\frac{b}{8}
    \left(
        b^2-4bx_1+4x_2^2-4x_3^2
    \right)=-\left(
        \mu_1^2+\frac b2
    \right)
    \left(
        \frac b2-\mu_2^2
    \right)
    \left(
        \frac b2+\mu_3^2
    \right).
\end{align*}

\begin{theorem}
    For each $b>0$, there exists a function $f_b$ satisfying the condition in Remark \ref{remarkaboutpara}. Moreover, $f_b$ can be taken as 
    \begin{align*}
        f_b
    :&=
        f_2(\mu_3)
        +
            \frac{2\mu_1^2+2\mu_2^2-2\mu_3^2+b}{4}f_1(\mu_3)
        +
        \frac12
        P_bf_3(\mu_3)\\&=f_2(\mu_3)
        +
            (x_1+\frac{b}{4})f_1(\mu_3)
        +
        \frac12
        P_bf_3(\mu_3).
    \end{align*}
        In other words, there exists a nondegenerate $\mathbb{Z}_2$ harmonic function on $\mathbb{R}^3$ whose branching set is a parabola $\Gamma_b$.
\end{theorem}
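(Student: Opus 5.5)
The plan is to verify directly the three requirements of Remark \ref{remarkaboutpara} for the explicit $f_b$: harmonicity, oddness under the deck involution $(\mu_1,\mu_2,\mu_3)\mapsto(\mu_1,-\mu_2,-\mu_3)$, and the local expansion with a nowhere-vanishing $\zeta^{3/2}$ coefficient. After that comes global well-definedness on $\mathbb R^3\setminus\Gamma_b$.

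\emph{Oddness.} Each of $f_1,f_2,f_3$ is an integral from $0$ of an even integrand, so each is odd in $\mu_3$. The prefactors $x_1+\frac b4$ and $P_b$ are functions of $\mathbf x$, so they are invariant under the deck transformation, since $\mathbf{x}(\mu_1,\mu_2,\mu_3)=\mathbf{x}(\mu_1,-\mu_2,-\mu_3)$. Hence $f_b$ is odd.

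\emph{Harmonicity.} I will use the Laplacian formula in modified paraboloidal coordinates term by term.
\begin{itemize}
\item A function of $\mu_3$ alone is annihilated by the third operator $(b+\mu_3^2)\partial_{\mu_3}^2+\mu_3\partial_{\mu_3}$ exactly when its derivative is a constant multiple of $(b+\mu_3^2)^{-1/2}$. Hence $f_1$ is harmonic.
\item For $f_2+x_1f_1$ I will write $\Delta(x_1f_1)=2\nabla x_1\cdot\nabla f_1$ and compute this using the metric \eqref{eq:modified-paraboloidal-metric}, with $x_1=\frac12(\mu_1^2+\mu_2^2-\mu_3^2)$. I will then check that it cancels $\Delta f_2$. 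Both are functions of $(\mu_1,\mu_2,\mu_3)$ with the same denominators, so this reduces to an algebraic identity, which I would verify after clearing denominators.
\item For $P_bf_3$, I note that $P_b$ factors as a product of separated polynomial solutions of the ODEs with the $Q$-modification, in the same way as $P_i$ in the hyperbolic section. The third factor is $\frac b2+\mu_3^2$. By the reduction-of-order lemma proved there, $(\frac b2+\mu_3^2)\int_0^{\mu_3}dv/((v^2+b/2)^2\sqrt{b+v^2})$ is a second solution of the same $\mu_3$-ODE. Therefore $P_bf_3=-(\mu_1^2+\frac b2)(\frac b2-\mu_2^2)\cdot R(\mu_3)$ is again a separated harmonic function. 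The polynomial identity $\sum_j Q(x_j)/\prod_{k\ne j}(x_j-x_k)=0$ and the Wick rotation $\mu\mapsto i\mu$ for the $\mu_1,\mu_2$ factors justify this, exactly as in the earlier remark.
\end{itemize}

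\emph{Expansion and nondegeneracy.} I will Taylor-expand in $(\mu_2,\mu_3)$ near $0$, with $\mu_1$ as a parameter. We have
\[f_1=\mu_3/\sqrt b-\mu_3^3/(6b^{3/2})+\dots,\qquad f_2=\mu_3^3/(3\sqrt b)+\dots,\qquad f_3=4\mu_3/b^{5/2}+O(\mu_3^3).\]
On $\Gamma_b$ one has $x_1=\mu_1^2/2$ and $P_b=-(\mu_1^2+\frac b2)\frac{b^2}{4}$. The linear term in $\mu_3$ therefore has coefficient
\[\frac{\mu_1^2/2+b/4}{\sqrt b}-\frac{2(\mu_1^2+b/2)b^2/4}{b^{5/2}}=0,\]
so the $\zeta^{1/2}$ coefficient $A$ vanishes and $f_b$ is critical. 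It also remains to check that the first-order variations of $x_1+\frac b4$ and $P_b$ in $\mu_2^2,\mu_3^2$ produce only cubic terms. Collecting the cubic terms should give a combination of $\mu_2^2\mu_3$ and $\mu_3^3$. Harmonicity forces the leading homogeneous part to be harmonic for the transverse flat model, so it must equal $c(\mu_1)\operatorname{Im}(\mu_2+\sqrt{-1}\mu_3)^3$. Comparing with $z^{1/2}$ in Remark \ref{remarkaboutpara} then identifies $B(\mu_1)$. The key computation, and the step I expect to be the main obstacle, is to show that $c(\mu_1)\neq0$ for every $\mu_1\in\mathbb R$. I would first compute the $\mu_3^3$ coefficient explicitly as a rational function of $\mu_1^2$ and check its sign. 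This requires careful bookkeeping of the $\mu_2^2,\mu_3^2$ corrections in $x_1$ and $P_b$.

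\emph{Global extension.} The integrands are smooth for all real $\mu_3$, and $\mu_1,\mu_2$ enter only polynomially, through $x_1$ and $P_b$. So $f_b$ is smooth on the parameter space. On the double cover it descends to a well-defined section of $\mathcal I$ off the loci where the coordinates degenerate, namely $x_2=0$, $x_3=0$, and $\mu_1=0$ away from $\Gamma_b$. There $f_b$ is continuous, locally bounded with locally $L^2$ gradient, and harmonic on a dense open set. I would extend it across these hypersurfaces by the same removable-singularity argument used by Yan, either weak harmonicity plus Weyl's lemma on each sheet, or direct smoothness in Cartesian variables. This yields a global nondegenerate $\mathbb Z_2$ harmonic function branched exactly along $\Gamma_b$.
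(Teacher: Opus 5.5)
\paragraph{Verdict.} Your route is the paper's route, step for step, but the nondegeneracy coefficient is left uncomputed. The matching steps are these:
\begin{itemize}
\item oddness, from the odd integrals $f_1,f_2,f_3$ and the deck-invariant prefactors $x_1+\frac b4$ and $P_b$;
\item $\Delta f_1=0$;
\item $\Delta f_2$ cancelled by $\Delta(x_1f_1)=2\langle\nabla x_1,\nabla f_1\rangle$;
\item $P_bf_3$ harmonic as a separated product, with the factor $\frac b2+\mu_3^2$ replaced by its reduction-of-order partner and the identity $(\mu_2^2+\mu_3^2)-(b+\mu_1^2+\mu_3^2)+(b+\mu_1^2-\mu_2^2)=0$;
\item the Taylor expansion, including your correct cancellation of the $\mu_3$-linear term.
\end{itemize}
Using harmonicity to force the cubic part to be a multiple of $\operatorname{Im}(\mu_2+\sqrt{-1}\mu_3)^3$ is a small economy over the paper, which computes both cubic coefficients.

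\paragraph{The gap: $c(\mu_1)$ is never computed.} The proposal stops exactly at the step that carries the word ``nondegenerate''. As written, nothing rules out $B$ vanishing somewhere on $\Gamma_b$. The computation is not an obstacle. Put $A:=\mu_1^2+\frac b2$. Then
\[
x_1+\tfrac b4=\tfrac A2+\tfrac12(\mu_2^2-\mu_3^2),\qquad \tfrac12P_b=-\tfrac A2\Bigl(\tfrac{b^2}{4}+\tfrac b2(\mu_3^2-\mu_2^2)-\mu_2^2\mu_3^2\Bigr).
\]
Inserting the expansions of $f_1,f_2,f_3$, the $\mu_3^3$-coefficient is
\[
\frac{1}{3\sqrt b}-\frac{A}{12b^{3/2}}-\frac{1}{2\sqrt b}-\frac A2\Bigl(-\frac{3}{2b^{3/2}}+\frac{2}{b^{3/2}}\Bigr)=-\frac{b+2A}{6b^{3/2}}=-\frac{b+\mu_1^2}{3b^{3/2}}.
\]
The $\mu_2^2\mu_3$-coefficient is $\frac{1}{2\sqrt b}+\frac{A}{b^{3/2}}=\frac{b+\mu_1^2}{b^{3/2}}$. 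The ratio $-3$ between the two confirms your harmonicity argument. Hence
\[
f_b=\frac{b+\mu_1^2}{3b^{3/2}}\operatorname{Im}(\mu_2+\sqrt{-1}\mu_3)^3+O(|z|^{5/2}),
\]
so $c(\mu_1)=\frac{b+\mu_1^2}{3b^{3/2}}>0$ for every $\mu_1$. This is the paper's $B(\mu_1)=-\sqrt{-1}\,\frac{b+\mu_1^2}{3b^{3/2}}$. Passing to $z^{3/2}$ only multiplies by the positive factor $\bigl(\frac{4b}{b+\mu_1^2}\bigr)^{3/4}$.

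\paragraph{The global extension.} The paper does not carry this out either; it defers to Yan. Your first option does not work as stated. A continuous function with $L^2_{\mathrm{loc}}$ gradient that is harmonic off a hypersurface need not be weakly harmonic across it; $|x_3|$ is an example. You would need $C^1$ matching.

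Your second option works and is short:
\begin{itemize}
\item $f_b$ depends on the $\mu_i$ only through $\mathbf x$ and $\mu_3$, and it is odd in $\mu_3$. So $f_b=\mu_3\,G(\mathbf x,\mu_3^2)$ with $G$ real-analytic.
\item $\mu_3^2=\lambda_3(\mathbf x)$ is the largest root of the cubic $Q_{\mathbf x}$, and this root is simple on $\mathbb R^3\setminus\Gamma_b$.
\item Where $\lambda_3=0$, i.e.\ on $\{x_3=0,\ 2bx_1>x_2^2\}$, evenness in $x_3$ gives $\lambda_3=x_3^2h$ with $h>0$ analytic, so $\mu_3=\pm x_3\sqrt h$ is smooth.
\end{itemize}
Thus each local branch of $f_b$ is smooth in Cartesian coordinates off $\Gamma_b$. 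Harmonicity then propagates from the dense chart, including across the part of $\{x_2=0\}$ with $\lambda_2=-b$ that the chart $|\mu_2|<\sqrt b$ misses.
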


\begin{proof}
     $f_b$ is odd in the sense of $f_b(\mu_1,\mu_2,\mu_3)=-f_b(\mu_1,-\mu_2,-\mu_3)$. We first check $f_b$ is harmonic outside $\{\mu_2=\mu_3=0\}$.

By direct computation, we have 
\[
    \Delta_{\widetilde g}f_1=0,\quad\Delta_{\widetilde g}f_2=\frac{
        2\mu_3\sqrt{b+\mu_3^2}}{(b+\mu_1^2+\mu_3^2)(\mu_2^2+\mu_3^2)}.
\]

Since $x_1$ is a Euclidean coordinate function, $\Delta_{\widetilde g}x_1=0.$ Moreover, we obtain
\[
    \partial_{\mu_3}x_1=-\mu_3.
\]
Since
\[
    \widetilde g^{33}
    =
    \frac{b+\mu_3^2}{
        (b+\mu_1^2+\mu_3^2)
        (\mu_2^2+\mu_3^2)
    },
\]
we have
\begin{align*}
    \left\langle
        \nabla_{\widetilde g}x_1,
        \nabla_{\widetilde g}f_1
    \right\rangle_{\widetilde g}
    &=
    \widetilde g^{33}
    (\partial_{\mu_3}x_1)
    f_1'(\mu_3) \notag\\
    &=
    -\frac{
        \mu_3\sqrt{b+\mu_3^2}
    }{
        (b+\mu_1^2+\mu_3^2)
        (\mu_2^2+\mu_3^2)
    }.
\end{align*}

Using the product rule for the Laplacian,
\[
    \Delta_{\widetilde g}(x_1f_1)
    =
    x_1\Delta_{\widetilde g}f_1
    +
    f_1\Delta_{\widetilde g}x_1
    +
    2
    \left\langle
        \nabla_{\widetilde g}x_1,
        \nabla_{\widetilde g}f_1
    \right\rangle_{\widetilde g},
\]
we obtain
\begin{equation*}
    \Delta_{\widetilde g}(x_1f_1)
    =
    -\frac{
        2\mu_3\sqrt{b+\mu_3^2}
    }{
        (b+\mu_1^2+\mu_3^2)
        (\mu_2^2+\mu_3^2)
    }.
\end{equation*}

This implies that 
\begin{equation*}
    \Delta_{\widetilde g}
    \left[
        f_2(\mu_3)
        +
        \left(
            x_1+\frac b4
        \right)f_1(\mu_3)
    \right]
    =0.
\end{equation*}

On the other hand, we define differential operators $\mathcal L:=(b+\mu^2)\partial_{\mu}^2+\mu\partial_{\mu}.$ $S(\mu)=b/2+\mu^2$ satisfies $\mathcal L S=4S$. Then the standard reduction-of-order formula gives a second solution
\begin{align*}
    \widetilde S(\mu)
    &=
    S(\mu)
    \int_0^{\mu}
    \frac{dv}{
        S(v)^2\sqrt{b+v^2}
    }.
\end{align*}

Thus,  $P_bf_3$ is also harmonic since $(\mu_2^2+\mu_3^2)
    -(b+\mu_1^2+\mu_3^2)
    +(b+\mu_1^2-\mu_2^2)
    =0.$

It remains to show the non-degeneracy of $f_b$.

From the definitions, we have 
\begin{align*}
    f_1(\mu_3)
    &=
    \frac{\mu_3}{\sqrt b}
    -
    \frac{\mu_3^3}{6b^{3/2}}
    +
    O(\mu_3^5),\\
    f_2(\mu_3)
    &=
    \frac{\mu_3^3}{3\sqrt b}
    +
    O(\mu_3^5),\\
    f_3(\mu_3)
    &=
    \frac{4\mu_3}{b^{5/2}}
    -
    \frac{6\mu_3^3}{b^{7/2}}
    +
    O(\mu_3^5).
\end{align*}

Thus, we have 
\begin{equation*}
\begin{aligned}
    f_b
    &=
    \frac{b+\mu_1^2}{3b^{3/2}}
    \left(
        3\mu_2^2\mu_3-\mu_3^3
    \right)
    +
    O(|z|^{5/2})\\
    &=-
    \operatorname{Re}( \sqrt{-1}\frac{b+\mu_1^2}{3b^{3/2}}(\mu_2+\sqrt{-1}\mu_3)^3)
    +
    O(|z|^{5/2}).
\end{aligned}
\end{equation*}

Then we can take $B(\mu_1)=-\sqrt{-1}\frac{b+\mu_1^2}{3b^{3/2}}$ and the nonvanishing of $B$ shows the non-degeneracy of $f_b$.
\end{proof}

\subsection{Asymptotic behavior and scale-down limit}

Let $\ell_+
    :=
    \{(x_1,0,0):x_1\geq 0\}$,
and fix
$K\Subset\mathbb{R}^3\setminus\ell_+.$ Under the  dilation $D_R(\mathbf{x})=R\mathbf{x}$, the rescaled
branching set is
\[
    D_R^{-1}(\Gamma_b)
    =
    \Gamma_{b/R}
    =
    \left\{
        x_3=0,\quad
        x_2^2=2\frac{b}{R}x_1
    \right\},
\]
which converges locally in Hausdorff distance to $\ell_+$. Moreover, the limiting ray occurs with multiplicity two. When R is sufficiently large, $RK:=\{R\mathbf{x}:\mathbf{x}\in K\}$ does not intersect the branching set $\Gamma_b$.

\begin{proposition}\label{asymptoticofpara}
    For sufficiently large $R>0$, we have the following asymptotic formula for $f_b$:
    \begin{equation*}
\begin{aligned}
f_b(R\mathbf{x})
=
\sigma\Bigg[
R^2\frac{x_3^2-x_2^2}{2b}
+
\frac{Rx_1}{2}\log R+
R\left\{
    \frac{|\mathbf{x}|}{2}
    +
    x_1
    \log\left(
        2\sqrt{
            \frac{|\mathbf{x}|-x_1}{b}
        }
    \right)
\right\}-
\frac b8\log R
+
O_{K,b}(1)
\Bigg]
\end{aligned}
\end{equation*}
for every $\mathbf{x}\in K$, where $O_{K,b}(1)$ is the error term uniformly bounded on $K$ and $\sigma=\operatorname{sgn}(\mu_3)=\pm1$.
\end{proposition}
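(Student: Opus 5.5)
The plan is to invert the modified paraboloidal coordinates asymptotically at the point $R\mathbf{x}$, insert the resulting expansions of $\mu_1,\mu_2,\mu_3$ into the explicit formula for $f_b$ from the preceding theorem, and expand each of $f_1,f_2,f_3$ for large argument. The sign $\sigma$ appears because $f_b(\mu_1,-\mu_2,-\mu_3)=-f_b(\mu_1,\mu_2,\mu_3)$. We may therefore work on the sheet $\mu_3>0$ and multiply by $\sigma=\operatorname{sgn}(\mu_3)$ at the end.

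\textbf{Step 1: expanding the roots.} At $R\mathbf{x}$ the cubic \eqref{eq:special-paraboloidal-cubic} reads
\[
\lambda^3+(b+2Rx_1)\lambda^2+\bigl(2bRx_1-R^2(x_2^2+x_3^2)\bigr)\lambda-bR^2x_3^2 .
\]
Dividing by $\lambda$ and treating the two large roots perturbatively gives
\[
\lambda_{1,3}=R\bigl(-x_1\mp|\mathbf{x}|\bigr)+O_{K,b}(1).
\]
Hence
\[
\mu_3^2=R(|\mathbf{x}|-x_1)+O(1),\qquad \mu_1^2=R(|\mathbf{x}|+x_1)-b+O(1),
\]
and the small root satisfies $\mu_2^2=b\,x_3^2/(x_2^2+x_3^2)+O(R^{-1})$, which stays bounded. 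The hypothesis $K\Subset\mathbb{R}^3\setminus\ell_+$ enters here: it ensures $|\mathbf{x}|-x_1\ge c_K>0$ on $K$, so the expansions are uniform on $K$ and $\mu_3\sim\sqrt R$. Then $\log\mu_3=\tfrac12\log R+\tfrac12\log(|\mathbf{x}|-x_1)+O(R^{-1})$. I would also record the $O(1)$ term of $\mu_3^2$ explicitly, since it feeds into the linear-in-$R$ bookkeeping below.

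\textbf{Step 2: expanding the building blocks.} Closed forms give, for $\mu_3\to+\infty$,
\[
f_1=\operatorname{arcsinh}(\mu_3/\sqrt b)=\log\frac{2\mu_3}{\sqrt b}+O(\mu_3^{-2}),
\]
and
\[
f_2=\tfrac12\bigl(\mu_3\sqrt{b+\mu_3^2}-b\operatorname{arcsinh}(\mu_3/\sqrt b)\bigr)=\frac{\mu_3^2}{2}+\frac b4-\frac b2\log\frac{2\mu_3}{\sqrt b}+o(1).
\]
For the third block, $f_3=C_3-\int_{\mu_3}^\infty\cdots=C_3+O(\mu_3^{-5})$, where $C_3=\int_0^\infty dv/\bigl((v^2+b/2)^2\sqrt{b+v^2}\bigr)$. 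I would evaluate $C_3$ with the substitution $v=\sqrt b\sinh t$, expecting $C_3=b^{-2}$. Since $P_b(R\mathbf{x})=O(R^2)$, the tail of $f_3$ contributes only $O(R^{-1/2})$ to $\tfrac12P_bf_3$. What remains is
\[
\tfrac12 C_3P_b(R\mathbf{x})=-\frac{C_3b}{16}\bigl(b^2-4bRx_1+4R^2(x_2^2-x_3^2)\bigr).
\]
With $C_3=b^{-2}$ this produces the leading term $R^2(x_3^2-x_2^2)/(2b)$, which pins down the value of $C_3$ to check.

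\textbf{Step 3: collecting terms.} The term $(Rx_1+b/4)f_1$ gives
\[
\frac{Rx_1}{2}\log R+Rx_1\log\Bigl(2\sqrt{(|\mathbf{x}|-x_1)/b}\Bigr)+\frac b8\log R+O(1),
\]
using that the $O(R^{-1})$ error in $\log\mu_3$ times $Rx_1$ is $O(1)$. From $f_2$, the term $-\tfrac b2\log\mu_3$ contributes $-\tfrac b4\log R$, so the total logarithmic constant is $-\tfrac b8\log R$, as claimed. The terms linear in $R$ come from three sources: $\mu_3^2/2$ (including its $O(1)$ correction, which is harmless), the $Rx_1$ part of $\tfrac12 C_3P_b$, and the $R$-linear part of $x_1f_1$. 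These should sum to $R|\mathbf{x}|/2$ plus the logarithmic term already displayed, and all remaining contributions are $O_{K,b}(1)$.

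I expect the main obstacle to be precisely this linear-in-$R$ bookkeeping. It requires the exact value of $C_3$ and the $O(1)$ corrections in the root expansions, and a naive count of the $Rx_1$ coefficients does not obviously close. I would first recheck the root expansion to second order via the implicit function theorem applied to $\lambda^{-1}Q_{R\mathbf{x}}(\lambda)$ in the variable $\lambda/R$. I would then cross-check the final formula numerically at a sample point, for instance on the negative $x_1$-axis, where $\mu_1^2$ is small and all quantities are explicit.
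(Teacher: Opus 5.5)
Your strategy matches the paper's. You obtain $\mu_3(R\mathbf{x})^2=R(|\mathbf{x}|-x_1)+O_{K,b}(1)$ from the implicit function theorem, write $f_b=f_2+(x_1+\tfrac b4)f_1+\tfrac12P_bf_3$ with $f_1,f_2,f_3$ in closed form, and collect terms. As written, however, the proposal does not close. The cause is a concrete arithmetic error, not a missing second-order term in the root expansion.

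The constant is $C_3=2/b^2$, not $b^{-2}$. With $v=\sqrt b\sinh t$ one has $v^2+b/2=\tfrac b2\cosh 2t$, so the integrand becomes $\tfrac{4}{b^2}\operatorname{sech}^2(2t)\,dt$. In fact $f_3=\tfrac{2}{b^2}\tanh(2\tau)$ with $\tau=\operatorname{arsinh}(\mu_3/\sqrt b)$. Your consistency check is also off by a factor of two. The value $C_3=b^{-2}$ would give $R^2(x_3^2-x_2^2)/(4b)$; it is $C_3=2b^{-2}$ that produces the stated coefficient $1/(2b)$. With the wrong $C_3$, the $Rx_1$-coefficient coming from $\tfrac12C_3P_b$ is $\tfrac14$ instead of $\tfrac12$. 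That is exactly why your count of the linear-in-$R$ terms fails.

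With the correct value the bookkeeping closes at once. You get exactly
$\tfrac12C_3P_b(R\mathbf{x})=b^{-2}P_b(R\mathbf{x})=R^2\frac{x_3^2-x_2^2}{2b}+\frac{Rx_1}{2}-\frac b8$.
The $+\frac{Rx_1}{2}$ cancels the $-\frac{Rx_1}{2}$ inside $\mu_3^2/2=\frac R2(|\mathbf{x}|-x_1)+O(1)$, leaving $\frac{R|\mathbf{x}|}{2}$. Your logarithmic bookkeeping, $\frac b8\log R-\frac b4\log R=-\frac b8\log R$, is correct, and together these give the formula.

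No second-order expansion of the roots is needed. The $O(1)$ error in $\mu_3^2$ enters only through $\mu_3^2/2$ and through $Rx_1\cdot O(R^{-1})$ in $x_1f_1$, and both contributions are $O(1)$.

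Two smaller points:
\begin{enumerate}
\item The tail of $f_3$ is $\sim\frac14\mu_3^{-4}$, not $O(\mu_3^{-5})$. After multiplication by $P_b=O(R^2)$ it contributes $O(1)$ rather than $O(R^{-1/2})$. This still lies within the claimed error.
\item Your expansions of $\mu_1$ and $\mu_2$ are unnecessary, since $f_b$ depends on the point only through $\mu_3$, $x_1$ and $P_b$. The one for $\mu_2^2$ is not even uniform near the negative $x_1$-axis, which $K$ is allowed to contain.
\end{enumerate}
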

\begin{proof}
    We first determine the behaviour of $\mu_3(R\mathbf{x})$.  Replacing
$\mathbf{x}$ by $R\mathbf{x}$ and $\lambda$ by $R\lambda$ in
\eqref{eq:special-paraboloidal-cubic}, dividing by $R^3$, and letting
$R\to\infty$, the equation for the positive root reduces to
\[
    \lambda
    \left(
        \lambda^2+2x_1\lambda-x_2^2-x_3^2
    \right)
    =0.
\]
Its positive root is $|\mathbf{x}|-x_1$.  Since
$|\mathbf{x}|-x_1$ is bounded away from zero on $K$ and this root is
simple, the implicit function theorem gives
\begin{equation}\label{eq:mu3-at-infinity}
    \mu_3(R\mathbf{x})^2
    =
    R\bigl(|\mathbf{x}|-x_1\bigr)
    +
    O_{K,b}(1).
\end{equation}
Thus the two sheets can be labelled by
$\sigma=\operatorname{sgn}(\mu_3)=\pm1$, and
\[
    \mu_3(R\mathbf{x})
    =
    \sigma
    \sqrt{R\bigl(|\mathbf{x}|-x_1\bigr)}
    \left(1+O_{K,b}(R^{-1})\right).
\]

The integrals defining $f_1,f_2,f_3$ can be evaluated explicitly:
\begin{align*}
    f_1(\mu_3)
    &=
    \operatorname{arsinh}\left(\frac{\mu_3}{\sqrt b}\right),\\
    f_2(\mu_3)
    &=
    \frac12
    \left[
        \mu_3\sqrt{b+\mu_3^2}
        -
        b\operatorname{arsinh}
        \left(\frac{\mu_3}{\sqrt b}\right)
    \right],\\
    f_3(\mu_3)
    &=
    \frac{
        4\mu_3\sqrt{b+\mu_3^2}
    }{
        b^2(b+2\mu_3^2)
    }.
\end{align*}
Consequently,
\begin{align}
f_b(R\mathbf{x})
={}&
\frac12\mu_3(R\mathbf{x})
\sqrt{b+\mu_3(R\mathbf{x})^2}
\notag\\
&+
\left(
    Rx_1-\frac b4
\right)
\operatorname{arsinh}
\left(
    \frac{\mu_3(R\mathbf{x})}{\sqrt b}
\right)
\notag\\
&+
\frac{
    2P_b(R\mathbf{x})
    \mu_3(R\mathbf{x})
    \sqrt{b+\mu_3(R\mathbf{x})^2}
}{
    b^2\bigl(b+2\mu_3(R\mathbf{x})^2\bigr)
}.
\label{eq:fb-closed-form-at-infinity}
\end{align}

By \eqref{eq:mu3-at-infinity},
\begin{align*}
    \operatorname{arsinh}
    \left(
        \frac{\mu_3(R\mathbf{x})}{\sqrt b}
    \right)
    =
    \sigma\left[
        \frac12\log R
        +
        \log\left(
            2\sqrt{
                \frac{|\mathbf{x}|-x_1}{b}
            }
        \right)
    \right]
    +
    O_{K,b}(R^{-1}),
\end{align*}
and
\[
    \frac12
    \mu_3(R\mathbf{x})
    \sqrt{b+\mu_3(R\mathbf{x})^2}
    =
    \sigma
    \frac R2
    \bigl(|\mathbf{x}|-x_1\bigr)
    +
    O_{K,b}(1).
\]
Moreover,
\[
    \frac{
        2\mu_3(R\mathbf{x})
        \sqrt{b+\mu_3(R\mathbf{x})^2}
    }{
        b+2\mu_3(R\mathbf{x})^2
    }
    =
    \sigma\left(1+O_{K,b}(R^{-2})\right),
\]
while the definition of $P_b$ gives the exact identity
\[
    \frac{P_b(R\mathbf{x})}{b^2}
    =
    R^2\frac{x_3^2-x_2^2}{2b}
    +
    \frac{Rx_1}{2}
    -
    \frac b8.
\]
Substituting these expressions into
\eqref{eq:fb-closed-form-at-infinity},  we obtain the result.
\end{proof}

Recall that, in the proof of the nonexistence of global critical $\mathbb{Z}_2$ harmonic functions whose branching set is a hyperbola, we make use of the scale-down limit since the rescaled limit of a hyperbola is a pair of intersecting lines. In our parabolic case, we are also interested in the scale-down limit of $f_b$. An interesting feature is that the rescaled limit of a parabola is a ray $\ell_+$. Moreover, $\mathbb{R}^3\setminus\ell_+$ is contractible. Thus, it supports no nontrivial flat real line bundle with monodromy $-1$. Consequently, the connected double branched covers degenerate locally into a trivial disconnected double cover. From this point of view, the scale-down limit of $f_b$, if it exists, may not be a $\mathbb{Z}_2$ harmonic function. Instead, it is an ordinary harmonic function on $\mathbb{R}^3$ that vanishes on $\ell_+$.

\begin{proposition}\label{prop:parabolic-scaledown}
Define
\[
    u_R(\mathbf{x})
    :=
    R^{-2}f_b(R\mathbf{x}).
\]
Let $K\Subset\mathbb{R}^3\setminus\ell_+$.  For all sufficiently large
$R$, the double cover is trivial over a neighbourhood of $K$.  After
labelling its two sheets by $\sigma=\pm1$, one has
\[
    u_R^\sigma
    \longrightarrow
    \sigma q_b
    \qquad\text{in }
    C^\infty(K),
\]
where
\[
    q_b(\mathbf{x})
    :=
    \frac{x_3^2-x_2^2}{2b}.
\]
More precisely, for every $k\geq0$,
\[
    \left\|
        u_R^\sigma-\sigma q_b
    \right\|_{C^k(K)}
    \leq
    C_{K,k,b}\frac{\log R}{R}.
\]
\end{proposition}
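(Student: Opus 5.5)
The plan is to upgrade the pointwise expansion of Proposition \ref{asymptoticofpara} to a $C^k$ statement by working with the closed form \eqref{eq:fb-closed-form-at-infinity} as a smooth function of $(R^{-1},\mathbf{x})$ near $\{0\}\times K$.

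First, triviality of the cover. Since $K$ is compact in $\mathbb{R}^3\setminus\ell_+$, choose a compact neighbourhood $K'$ of $K$ still avoiding $\ell_+$, so that $|\mathbf{x}|-x_1\geq c_{K}>0$ on $K'$. Then $D_R^{-1}(\Gamma_b)=\Gamma_{b/R}$ misses $K'$ for $R$ large. Moreover $\mu_3(R\mathbf{x})^2$ is bounded below by $R c_K/2$ by \eqref{eq:mu3-at-infinity}, so $\mu_3$ never vanishes over $K'$. Hence $\sigma=\operatorname{sgn}\mu_3$ is a continuous label, and the double cover splits over $K'$.

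Next, regularity in the parameter. Put $\varepsilon=R^{-1}$ and $\nu:=\varepsilon\mu_3(R\mathbf{x})^2$. Substituting into \eqref{eq:special-paraboloidal-cubic} and dividing by $R^3$, $\nu$ is a simple root of
\[
\nu^3+(\varepsilon b+2x_1)\nu^2+(2\varepsilon b x_1-x_2^2-x_3^2)\nu-\varepsilon^2 b x_3^2=0 .
\]
This polynomial depends smoothly (indeed polynomially) on $(\varepsilon,\mathbf{x})$. At $\varepsilon=0$ its root is $|\mathbf{x}|-x_1$, which is simple and positive on $K'$. The implicit function theorem then gives $\nu$ smooth on $[0,\varepsilon_0)\times K'$, with $\partial_{\mathbf{x}}^\alpha(\nu-(|\mathbf{x}|-x_1))=O(\varepsilon)$ uniformly.

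Now rewrite each term of \eqref{eq:fb-closed-form-at-infinity}, divided by $R^2$, in terms of $\nu$ and $\varepsilon$:
\begin{itemize}
\item The first term is $\tfrac12\varepsilon\sqrt{\nu}\sqrt{\varepsilon b+\nu}\,\sigma$, which is smooth and $O(\varepsilon)$ in $C^k$.
\item The arsinh term is $\varepsilon(x_1-\varepsilon b/4)\,\sigma\,\operatorname{arsinh}\bigl(\sqrt{\nu/(\varepsilon b)}\bigr)$. Since $\operatorname{arsinh}(y)=\log(2y)+h(y^{-2})$ with $h$ smooth near $0$, this equals
\[
\sigma\,\varepsilon(x_1-\varepsilon b/4)\Bigl[\tfrac12\log R+\log\bigl(2\sqrt{\nu/b}\bigr)+h\bigl(\varepsilon b/\nu\bigr)\Bigr].
\]
Every $\mathbf{x}$-derivative of this is $O(\varepsilon\log R)$.
\item The last term is $\varepsilon^2 P_b(R\mathbf{x})/b^2$ times $2\sqrt{\nu}\sqrt{\varepsilon b+\nu}/(\varepsilon b+2\nu)\cdot\sigma$. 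Using the exact identity $\varepsilon^2P_b(R\mathbf{x})/b^2=q_b+\varepsilon x_1/2-\varepsilon^2 b/8$, and noting that the second factor is $\sigma(1+O(\varepsilon^2))$ smoothly, this term is $\sigma q_b+O(\varepsilon)$ in $C^k(K)$.
\end{itemize}

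Summing the three contributions gives $\|u_R^\sigma-\sigma q_b\|_{C^k(K)}\leq C_{K,k,b}\log R/R$.

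The main obstacle is making sure that $\mathbf{x}$-derivatives do not pick up negative powers of $\varepsilon$. This is exactly why I would pass to the scaled unknown $\nu$, which is smooth up to $\varepsilon=0$ by the implicit function theorem, rather than differentiate $\mu_3(R\mathbf{x})$ directly. Derivatives of $\mu_3(R\mathbf{x})$ carry factors of $R$, and these are absorbed by the $R^{-2}$ normalization only once everything is expressed through $\nu$.
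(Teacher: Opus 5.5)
Your argument is correct, but it takes a different route from the paper.

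\textbf{The paper's route.} The paper's proof is two lines. Proposition~\ref{asymptoticofpara}, applied on a compact neighbourhood $K'$ of $K$ avoiding $\ell_+$, gives $\|u_R^\sigma-\sigma q_b\|_{C^0(K')}=O(\log R/R)$. For large $R$ the difference $u_R^\sigma-\sigma q_b$ is a genuine harmonic function near $K'$, because $\Gamma_{b/R}$ misses $K'$ and $q_b$ is harmonic. Interior estimates for harmonic functions then bound every $C^k(K)$ norm by the $C^0(K')$ norm.

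\textbf{Your route.} You never use harmonicity. Instead you differentiate the explicit closed form \eqref{eq:fb-closed-form-at-infinity} after rewriting it through the rescaled root $\nu=\lambda_3(R\mathbf{x})/R$. The implicit function theorem makes $\nu$ smooth up to $\varepsilon=0$. This exhibits $u_R^\sigma$ as $U_0(\varepsilon,\mathbf{x})+\varepsilon\log R\,U_1(\varepsilon,\mathbf{x})$, with $U_0,U_1$ smooth on $[0,\varepsilon_0)\times K'$, $U_0(0,\cdot)=\sigma q_b$ and $U_1=\tfrac{\sigma}{2}(x_1-\varepsilon b/4)$.

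\textbf{Comparison.} The elliptic route is shorter and works for any harmonic function with a uniform $C^0$ expansion. Your route is self-contained and makes explicit the uniformity that Proposition~\ref{asymptoticofpara} needs anyway, since its $O_{K,b}(1)$ term comes from the same implicit-function argument. It also yields a full two-scale expansion in $\varepsilon$ and $\varepsilon\log R$ with derivative control, not just the leading rate.

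\textbf{A small slip.} After substituting $\mathbf{x}\mapsto R\mathbf{x}$ and $\lambda=R\nu$ and dividing by $R^3$, the constant term is $-bR^2x_3^2/R^3=-\varepsilon b x_3^2$, not $-\varepsilon^2 b x_3^2$. This is harmless: the coefficients stay polynomial in $(\varepsilon,\mathbf{x})$, and the $\varepsilon=0$ cubic, with its simple positive root $|\mathbf{x}|-x_1$, is unchanged.
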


\begin{proof}
The proof follows immediately from the asymptotic behavior of $f_b$ in Proposition \ref{asymptoticofpara} and the interior elliptic estimates of the Laplacian operator.
\end{proof}

\subsection{The special Lagrangian origin of the parabolic model}
\label{sec:parabolic-special-Lagrangian}

We now give a precise geometric interpretation of the
$\mathbb{Z}_2$ harmonic function $f_b$ constructed in the preceding
section.  The relevant special Lagrangian $3$-folds are the affine-quadric
examples in \cite[Example~7.4]{Joyce2001}.  We shall show that they form
an exact two-valued Lagrangian graph over $\mathbb{R}^3$, branched along
the same parabola $\Gamma_b$, and that a rescaling of their graph
potentials converges to $f_b$.

Write $z_j=x_j+\sqrt{-1}y_j,\
    j=1,2,3,$ and equip $\mathbb{C}^3\cong T^*\mathbb{R}^3$ with its standard flat Calabi--Yau structure
\[
\begin{aligned}
    \omega
    =
    \frac{\sqrt{-1}}{2}
    \sum_{j=1}^3
    dz_j\wedge d\overline z_j;
    \Omega
    =
    dz_1\wedge dz_2\wedge dz_3.
\end{aligned}
\]
We use the Liouville form $\lambda_{\mathrm{can}}:=-\sum_{j=1}^3
    y_j\,dx_j,
   \
    d\lambda_{\mathrm{can}}=\omega.$
Thus, if a Lagrangian is locally the cotangent graph
$y=dG$, then $\lambda|_{\operatorname{graph}(dG)}
    =
    -dG.$

In the notation of \cite[Example~7.4]{Joyce2001}, let
$C,D,E\in\mathbb{C}$.  Joyce's special Lagrangian is parametrized by
\begin{equation}\label{eq:Joyce-original-parabolic-family}
\begin{aligned}
\Phi_{C,D,E}(p,q,\tau)
={}&
\Bigg(
    \left(
        Ce^\tau+De^{-\tau}
    \right)p,\,
    \left(
        \overline C e^\tau
        -
        \overline D e^{-\tau}
    \right)q,\\
&\qquad
    -\frac12(p^2+q^2)
    +\frac12|C|^2e^{2\tau}
    +\frac12|D|^2e^{-2\tau}
    +2\sqrt{-1}
        \operatorname{Im}(C\overline D)\tau
    +E
\Bigg),
\end{aligned}
\end{equation}
where $(p,q,\tau)\in\mathbb{R}^3$.  We use $p,q,\tau$ for Joyce's
real parameters in order to distinguish them from the coordinates
$x_1,x_2,x_3$ on the base.

Fix $b>0$.  For $|\varepsilon|<\pi/2$, set
\begin{equation}\label{eq:adapted-Joyce-parameters}
\begin{aligned}
    \kappa_\varepsilon
    &:=
    \frac{\sqrt b}{2\cos\varepsilon},\\
    C_\varepsilon
    &:=
    \kappa_\varepsilon e^{\sqrt{-1}\varepsilon},
    \qquad
    D_\varepsilon
    :=
    \kappa_\varepsilon e^{-\sqrt{-1}\varepsilon},\\
    E_\varepsilon
    &:=
    -\kappa_\varepsilon^2.
\end{aligned}
\end{equation}
Notice that
\begin{equation}\label{eq:Im-CDbar}
    2\operatorname{Im}
    \left(
        C_\varepsilon\overline D_\varepsilon
    \right)
    =
    2\kappa_\varepsilon^2\sin(2\varepsilon)
    =
    b\tan\varepsilon.
\end{equation}

Consider the unitary transformation
\begin{equation*}
    \mathcal U:
    \mathbb{C}^3
    \longrightarrow
    \mathbb{C}^3,
    \qquad
    \mathcal U(Z_1,Z_2,Z_3)
    =
    (-Z_3,Z_1,Z_2).
\end{equation*}
Since $\det_{\mathbb{C}}\mathcal U=-1,$ $\mathcal U$ preserves the K\"ahler form and changes
the special Lagrangian phase by $\pi$.  Thus it preserves the
unoriented special Lagrangian condition; if calibrated orientations
are retained, one reverses the orientation after applying
$\mathcal U$.

Define
\[
    \Psi_\varepsilon
    :=
    \mathcal U\circ
    \Phi_{C_\varepsilon,D_\varepsilon,E_\varepsilon}
    =
    X_\varepsilon
    +
    \sqrt{-1}Y_\varepsilon=(X_{\varepsilon,1},X_{\varepsilon,2},X_{\varepsilon,3})+\sqrt{-1}(Y_{\varepsilon,1},Y_{\varepsilon,2},Y_{\varepsilon,3}).
\]
Substituting \eqref{eq:adapted-Joyce-parameters} into
\eqref{eq:Joyce-original-parabolic-family} gives the following
formulas:
\begin{equation}\label{eq:adapted-Joyce-real-projection}
\begin{cases}
X_{\varepsilon,1}
&=
\frac{p^2+q^2}{2}
-
\frac{b}{2\cos^2\varepsilon}
\sinh^2\tau,\\
X_{\varepsilon,2}
&=
\sqrt b\,\cosh\tau\,p,\\
X_{\varepsilon,3}
&=
\sqrt b\,\sinh\tau\,q,
\end{cases}
\end{equation}
and
\begin{equation}\label{eq:adapted-Joyce-imaginary-projection}
\begin{cases}
Y_{\varepsilon,1}
&=
-b\tan\varepsilon\,\tau,\\
Y_{\varepsilon,2}
&=
\sqrt b\,\tan\varepsilon\,
\sinh\tau\,p,\\
Y_{\varepsilon,3}
&=
-\sqrt b\,\tan\varepsilon\,
\cosh\tau\,q.
\end{cases}
\end{equation}
For every sufficiently small $\varepsilon\neq0$,
\eqref{eq:Im-CDbar} is nonzero, so Joyce's result implies that $L_\varepsilon
    :=
    \Psi_\varepsilon(\mathbb{R}^3)$
is an embedded special Lagrangian $3$-fold diffeomorphic to
$\mathbb{R}^3$. Thus $L_\varepsilon$ is simply connected and every closed form on $L_\varepsilon$ is exact. Using Joyce's result, we can develop an infinitesimal branched deformation of special Lagrangian submanifolds similar to the Yan's example \cite{yan2025constructionnondegeneratemathbbz2harmonicfunctions}.

\begin{proposition}\label{prop:Joyce-parabolic-correspondence}
The family $L_\varepsilon$ has the following properties.
\begin{enumerate}
    \item
    For every sufficiently small $\varepsilon$, the real projection
    $X_\varepsilon$ is a two-to-one map away from $\Gamma_b$ and is
     branched along $\Gamma_b$.  Its deck transformation is
    \[
        \sigma(p,q,\tau)
        =
        (p,-q,-\tau).
    \]

    \item
    For $\varepsilon\neq0$, $L_\varepsilon$ is an exact
    special Lagrangian two-valued graph over $\mathbb{R}^3$, branched
    along $\Gamma_b$.  If $G_\varepsilon$ denotes its graph
    potential, normalized to vanish over $\Gamma_b$, then
    \[
        \Psi_\varepsilon^*\lambda
        =
        -dG_\varepsilon,
        \qquad
        G_\varepsilon\circ\sigma
        =
        -G_\varepsilon.
    \]

    \item
    On the covering space,
    \[
        -\frac{\cot\varepsilon}{b}
        G_\varepsilon
        \longrightarrow
        f_b
        \qquad
        \text{in }C^\infty_{\mathrm{loc}}
    \]
    as $\varepsilon\to0$.  Equivalently, after choosing either sheet
    over a relatively compact simply connected open set $U\Subset\mathbb{R}^3\setminus\Gamma_b$, the corresponding
    rescaled single-valued graph potentials
    $-(\cot\varepsilon/b)G_\varepsilon$ converge locally smoothly
    to that branch of $f_b$.
\end{enumerate}
\end{proposition}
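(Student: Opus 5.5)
The plan is to make everything explicit by identifying Joyce's parameters $(p,q,\tau)$ with the modified paraboloidal coordinates of Section 4.1. At $\varepsilon=0$, formula \eqref{eq:adapted-Joyce-real-projection} becomes $X_0=\bigl(\tfrac{p^2+q^2}{2}-\tfrac b2\sinh^2\tau,\ \sqrt b\cosh\tau\,p,\ \sqrt b\sinh\tau\,q\bigr)$. Comparing with \eqref{eq:modified-paraboloidal-coordinates}, I will use the substitution
\[
\mu_3=\sqrt b\sinh\tau,\qquad p=\mu_1\sqrt{\tfrac{b-\mu_2^2}{b}},\qquad q=\mu_2\sqrt{\tfrac{b+\mu_1^2}{b}}.
\]
With it one checks directly that $b+\mu_3^2=b\cosh^2\tau$ and $p^2+q^2=\mu_1^2+\mu_2^2$. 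Hence $X_0(p,q,\tau)=\mathbf x(\mu_1,\mu_2,\mu_3)$ exactly. Under this substitution the deck involution $(\mu_1,\mu_2,\mu_3)\mapsto(\mu_1,-\mu_2,-\mu_3)$ becomes $\sigma(p,q,\tau)=(p,-q,-\tau)$. For $\varepsilon\neq0$ only $X_{\varepsilon,1}$ changes, through the factor $1/\cos^2\varepsilon$. I will treat $X_\varepsilon$ as a smooth $\varepsilon$-family with $X_\varepsilon\circ\sigma=X_\varepsilon$.

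For part (1), I will compute the Jacobian of $X_\varepsilon$. It degenerates exactly on $\{q=\tau=0\}$, and the image of that set is $\{(p^2/2,\sqrt b\,p,0)\}=\Gamma_b$ for every $\varepsilon$. The quotient by $\sigma$ is a bijection onto $\mathbb R^3$. To see this, note that $X_{\varepsilon,2},X_{\varepsilon,3}$ and $X_{\varepsilon,1}$ determine $p$, $q\sinh\tau$ and $\sinh^2\tau$ through a monotone scalar equation, in the same way as the cubic \eqref{eq:special-paraboloidal-cubic} determines $\mu_3^2$. Near $\{q=\tau=0\}$ the map is the standard fold $(q,\tau)\mapsto$ a quadratic map, which gives branching of order two.

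For part (2), I compute $\Psi_\varepsilon^*\lambda_{\mathrm{can}}=-\sum_j Y_{\varepsilon,j}\,dX_{\varepsilon,j}$. By \eqref{eq:adapted-Joyce-imaginary-projection}, every $Y_{\varepsilon,j}$ equals $\tan\varepsilon$ times an $\varepsilon$-independent function, namely $-b\tau$, $\sqrt b\sinh\tau\,p$ and $-\sqrt b\cosh\tau\,q$. Each of these is $\sigma$-odd, while each $X_{\varepsilon,j}$ is $\sigma$-even. The form is closed because $L_\varepsilon$ is Lagrangian, and it is exact because $\mathbb R^3$ is simply connected. So $G_\varepsilon=\tan\varepsilon\,\widetilde G_\varepsilon$ with
\[
d\widetilde G_\varepsilon=-b\tau\,dX_{\varepsilon,1}+\sqrt b\sinh\tau\,p\,dX_{\varepsilon,2}-\sqrt b\cosh\tau\,q\,dX_{\varepsilon,3}.
\]
I will integrate this explicitly to get an expression polynomial in $p,q$ and in $\tau,\sinh\tau,\cosh\tau$. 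The normalization $\widetilde G_\varepsilon=0$ on $\{q=\tau=0\}$ then forces $\widetilde G_\varepsilon\circ\sigma=-\widetilde G_\varepsilon$, because $d\widetilde G_\varepsilon$ is $\sigma$-odd.

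For part (3), the prefactor gives $-\tfrac{\cot\varepsilon}{b}G_\varepsilon=-\widetilde G_\varepsilon/b$. This expression depends smoothly on $\varepsilon$, including at $\varepsilon=0$, so it converges in $C^\infty_{\mathrm{loc}}$ on the $(p,q,\tau)$ cover to $-\widetilde G_0/b$. Since $X_\varepsilon\to X_0$ smoothly with uniformly nondegenerate Jacobian on compacts away from the branch locus, this gives convergence of the single-valued branches over any $U\Subset\mathbb R^3\setminus\Gamma_b$.

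The main obstacle is the identification $-\widetilde G_0/b=f_b$. My plan is to rewrite $-\widetilde G_0/b$ in $\mu$-coordinates using $\tau=\operatorname{arsinh}(\mu_3/\sqrt b)=f_1(\mu_3)$ and the closed forms of $f_2,f_3$ from the proof of Proposition \ref{asymptoticofpara}. The term $\tau\,dX_1$ should integrate to $(x_1+\tfrac b4)f_1$ up to the constant fixed by normalization. The remaining terms should produce $f_2$ and $\tfrac12P_bf_3$; this matching is what fixes the shift $b/4$ and the constant. As a consistency check, an alternative route avoids the full matching. First show that $-\widetilde G_0/b$ is harmonic, as the linearization of the special Lagrangian equation. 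Then compare both functions as odd, critical functions vanishing on $\Gamma_b$ with the same leading behaviour, via Proposition \ref{asymptoticofpara}. I would rely on the direct computation, since a uniqueness statement for the parabolic case is not available.
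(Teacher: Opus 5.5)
Your proposal is correct and follows essentially the same route as the paper. It uses the same substitution identifying $X_0$ with the modified paraboloidal chart, the Jacobian computation together with the monotone equation in $\rho=b\sinh^2\tau$, and the explicit integration of $\sum_j Y_{\varepsilon,j}\,dX_{\varepsilon,j}$ to $G_\varepsilon=\tan\varepsilon\,\widetilde G_\varepsilon$ with $\widetilde G_\varepsilon$ smooth in $\varepsilon$. It then checks $-\widetilde G_0/b=f_b$ directly through the closed forms of $f_1,f_2,f_3$.

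Two points need tidying when you write it out. First, when $x_3=0$ and $2bx_1-x_2^2>0$ the monotone equation has no root; the preimages have $\tau=0$ and $q$ is fixed by $X_{\varepsilon,1}$, not by $q\sinh\tau$, so this case must be handled separately as the paper does. Second, the matching with $f_b$ is not term by term: $\tau\,dX_1$ alone is not closed, so compare the full expressions instead, e.g.\ using $P_b\circ X_0=-\tfrac{b^2}{8}\cosh(2\tau)\,(b+2p^2-2q^2)$.
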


\begin{proof}
We divide the proof into several steps.

\medskip
\noindent
\textbf{Step 1: the projection is  two-to-one away from $\Gamma_b$.}

Define $\sigma(p,q,\tau):=(p,-q,-\tau).$ Equations \eqref{eq:adapted-Joyce-real-projection} and
\eqref{eq:adapted-Joyce-imaginary-projection} give $$X_\varepsilon\circ\sigma
    =
    X_\varepsilon,
    \
    Y_\varepsilon\circ\sigma
    =
    -Y_\varepsilon.$$
Thus $\sigma$ preserves the real projection and exchanges the two
cotangent values. A direct calculation gives
\begin{equation*}
\begin{aligned}
\det(DX_\varepsilon)
=
-b\Bigg(
    &p^2\sinh^2\tau
    +
    q^2\cosh^2\tau
    +
    \frac{b}{\cos^2\varepsilon}
    \sinh^2\tau\cosh^2\tau
\Bigg).
\end{aligned}
\end{equation*}
Since $\cosh\tau>0$, the determinant vanishes precisely when $q=0,\ \tau=0.$ This is also the fixed-point set of $\sigma$.  Its image is
\[
    X_\varepsilon(p,0,0)
    =
    \left(
        \frac{p^2}{2},
        \sqrt b\,p,
        0
    \right).
\]
Hence the critical-value set is independent of $\varepsilon$ and is
exactly $\Gamma_b.$ The $\varepsilon$-dependent factor $\kappa_\varepsilon$ in
\eqref{eq:adapted-Joyce-parameters} was chosen precisely so that the
branching parabola remains fixed throughout the family.

It is also important that $\Gamma_b$ is the branch locus of the real
projection, not a singular locus of $L_\varepsilon$.  For
$\varepsilon\neq0$, the full map $\Psi_\varepsilon$ remains an
embedding across $\{q=\tau=0\}$; the missing rank of
$DX_\varepsilon$ is supplied by the imaginary directions in
$DY_\varepsilon$.

Let $\mathbf{x}
    =
    (x_1,x_2,x_3)
    \in\mathbb{R}^3$
and put $\rho
    :=
    b\sinh^2\tau
    \geq0.$ When $\rho>0$, equations
\eqref{eq:adapted-Joyce-real-projection} imply
\[
    p
    =
    \frac{x_2}{\sqrt{b+\rho}},
    \qquad
    q
    =
    \frac{x_3}{\sqrt b\,\sinh\tau}.
\]
The first equation in
\eqref{eq:adapted-Joyce-real-projection} is then equivalent to
\begin{equation}\label{eq:adapted-Joyce-root-equation}
    H_{\mathbf{x},\varepsilon}(\rho)
    :=
    \frac{\rho}{\cos^2\varepsilon}
    +
    2x_1
    -
    \frac{x_2^2}{b+\rho}
    -
    \frac{x_3^2}{\rho}
    =
    0.
\end{equation}
Moreover,
\begin{equation*}
    H_{\mathbf{x},\varepsilon}'(\rho)
    =
    \frac{1}{\cos^2\varepsilon}
    +
    \frac{x_2^2}{(b+\rho)^2}
    +
    \frac{x_3^2}{\rho^2}
    >
    0.
\end{equation*}

If $x_3\neq0$, then
\[
    \lim_{\rho\to0^+}
    H_{\mathbf{x},\varepsilon}(\rho)
    =
    -\infty,
    \qquad
    \lim_{\rho\to+\infty}
    H_{\mathbf{x},\varepsilon}(\rho)
    =
    +\infty.
\]
Thus there is a unique positive solution $\rho$.  It determines two
preimages, obtained by choosing the two signs of $\sinh\tau$; the
corresponding signs of $q$ are opposite.  These two points are
exchanged by $\sigma$.

Suppose now that $x_3=0$ and define $\Delta(\mathbf{x})
    :=
    2bx_1-x_2^2.$
    
If $\Delta(\mathbf{x})<0$, then
\[
    H_{\mathbf{x},\varepsilon}(0)
    =
    \frac{\Delta(\mathbf{x})}{b}
    <
    0.
\]
Here and below, when $x_3=0$ we use the continuous extension of
$H_{\mathbf{x},\varepsilon}$ to $\rho=0$.
There is a unique positive root of
\eqref{eq:adapted-Joyce-root-equation}; the two preimages have
$q=0$ and opposite nonzero values of $\tau$.

If $\Delta(\mathbf{x})=0$, the unique preimage modulo $\sigma$ is
\[
    p=\frac{x_2}{\sqrt b},
    \qquad
    q=\tau=0.
\]
It is a fixed point of $\sigma$ and lies over $\Gamma_b$.

Finally, if $\Delta(\mathbf{x})>0$, there is no positive root of
\eqref{eq:adapted-Joyce-root-equation}.  Instead,
\[
    \tau=0,
    \qquad
    p=\frac{x_2}{\sqrt b},
    \qquad
    q
    =
    \pm
    \sqrt{
        \frac{\Delta(\mathbf{x})}{b}
    }.
\]
Again, the two preimages are exchanged by $\sigma$.  Consequently,
$X_\varepsilon$ induces a bijection
\[
    \mathbb{R}^3/\langle\sigma\rangle
    \longrightarrow
    \mathbb{R}^3
\]
and is two-to-one away from $\Gamma_b$.

\medskip
\noindent
\textbf{Step 2: identification with the modified paraboloidal
coordinates.}

At $\varepsilon=0$, the real projection is
\begin{equation}\label{eq:central-Joyce-real-projection}
\begin{cases}
X_{0,1}
&=
\frac{p^2+q^2}{2}
-
\frac b2\sinh^2\tau,\\
X_{0,2}
&=
\sqrt b\,\cosh\tau\,p,\\
X_{0,3}
&=
\sqrt b\,\sinh\tau\,q.
\end{cases}
\end{equation}
On the standard modified paraboloidal coordinate chart, where
\[
    \mu_1,\mu_3\in\mathbb{R},
    \qquad
    |\mu_2|<\sqrt b,
\]
make the change of variables
\begin{equation}\label{eq:mu-to-adapted-Joyce-variables}
\begin{cases}
    p
    &=
    \mu_1
    \sqrt{
        \frac{b-\mu_2^2}{b}
    },\\
    q
    &=
    \mu_2
    \sqrt{
        \frac{b+\mu_1^2}{b}
    },\\
    \tau
    &=
    \operatorname{arsinh}
    \left(
        \frac{\mu_3}{\sqrt b}
    \right).
\end{cases}
\end{equation}

Consequently, substitution into
\eqref{eq:central-Joyce-real-projection} yields
\begin{equation}\label{eq:Joyce-equals-paraboloidal-cover}
\begin{cases}
X_{0,1}
&=
\frac{
    \mu_1^2+\mu_2^2-\mu_3^2
}{2},\\
X_{0,2}
&=
\mu_1
\sqrt{
    \frac{
        (b-\mu_2^2)(b+\mu_3^2)
    }{b}
},\\
X_{0,3}
&=
\mu_2\mu_3
\sqrt{
    \frac{
        b+\mu_1^2
    }{b}
}.
\end{cases}
\end{equation}
These are exactly the modified paraboloidal coordinate formulas
used in the construction of $f_b$.  Moreover, $(\mu_1,\mu_2,\mu_3)
    \longmapsto
    (\mu_1,-\mu_2,-\mu_3)$ corresponds to
$(p,q,\tau)
    \longmapsto
    (p,-q,-\tau).$
Thus the two descriptions have exactly the same deck
transformation and the same branching set.

\medskip
\noindent
\textbf{Step 3: exactness and the $\mathbb{Z}_2$ potential.}

Since $L_\varepsilon$ is a simply connected special Lagrangian submanifold, the restriction of Liouville form $\lambda_{\mathrm{can}}|_{L_\varepsilon}$ is exact. Thus, there exists a potential function $G_\varepsilon$ such that $\lambda_{\mathrm{can}}|_{L_\varepsilon}=-dG_\varepsilon$. Moreover, since the projection $X_\varepsilon$ is a two-to-one map branching along $\Gamma_b$, $G_\varepsilon$ can be regarded as a $\mathbb{Z}_2$ function on $\mathbb{R}^3\setminus\Gamma_b$. Actually, we can express $G_\varepsilon$ explicitly in terms of coordinates $(p,q,\tau)$. Differentiating
\eqref{eq:adapted-Joyce-real-projection}, we obtain
\begin{equation}\label{eq:dXepsilon-Joyce}
\begin{aligned}
dX_{\varepsilon,1}
&=
p\,dp
+
q\,dq
-
b\sec^2\varepsilon\,\sinh\tau \cosh\tau\,d\tau,\\
dX_{\varepsilon,2}
&=
\sqrt b
\left(
    \cosh\tau\,dp
    +
    \sinh\tau p\,d\tau
\right),\\
dX_{\varepsilon,3}
&=
\sqrt b
\left(
    \sinh\tau \,dq
    +
    \cosh\tau q\,d\tau
\right).
\end{aligned}
\end{equation}
Using
\eqref{eq:adapted-Joyce-imaginary-projection}, a direct
calculation gives
\begin{equation}\label{eq:finite-Joyce-graph-form}
\begin{aligned}
\sum_{j=1}^3
Y_{\varepsilon,j}\,dX_{\varepsilon,j}
=
\tan\varepsilon
\Big[
    &bp(\sinh\tau \cosh\tau-\tau)\,dp
    -
    bq(\sinh\tau \cosh\tau+\tau)\,dq\\
    &+
    \big(
        b\sinh^2\tau p^2
        -
        b\cosh^2\tau q^2
        +
        b^2\sec^2\varepsilon\,
        \tau \sinh\tau \cosh\tau
    \big)d\tau
\Big].
\end{aligned}
\end{equation}
Define
\begin{equation}\label{eq:finite-Joyce-potential}
\begin{aligned}
G_\varepsilon(p,q,\tau)
:=
\tan\varepsilon
\Bigg\{
    &\frac b2(\sinh\tau \cosh\tau-\tau)p^2
    -
    \frac b2(\sinh\tau \cosh\tau+\tau)q^2\\
    &+
    b^2\sec^2\varepsilon
    \left[
        \frac{\tau}{4}\cosh(2\tau)
        -
        \frac18\sinh(2\tau)
    \right]
\Bigg\}.
\end{aligned}
\end{equation}
We can compute that 
\[
    dG_\varepsilon
    =
    \sum_{j=1}^3
    Y_{\varepsilon,j}\,dX_{\varepsilon,j}.
\]
Consequently,
\[
    \Psi_\varepsilon^*\lambda
    =
    -
    \sum_{j=1}^3
    Y_{\varepsilon,j}\,dX_{\varepsilon,j}
    =
    -dG_\varepsilon.
\]

Furthermore, $G_\varepsilon(p,-q,-\tau)
    =
    -G_\varepsilon(p,q,\tau)$
and
$G_\varepsilon(p,0,0)=0.$ Thus $G_\varepsilon$ is the anti-invariant two-valued graph
potential normalized to vanish on the branching set.

\medskip
\noindent
\textbf{Step 4: convergence argument.}

Define
\begin{equation*}
\begin{aligned}
F_J(p,q,\tau)
:={}&
\frac b2(\sinh\tau \cosh\tau-\tau)p^2
-
\frac b2(\sinh\tau \cosh\tau+\tau)q^2\\
&+
b^2
\left[
    \frac{\tau}{4}\cosh(2\tau)
    -
    \frac18\sinh(2\tau)
\right].
\end{aligned}
\end{equation*}
Formula \eqref{eq:finite-Joyce-potential} gives the exact identity
\begin{equation}\label{eq:scaled-Joyce-potential-expansion}
\begin{aligned}
\cot\varepsilon\,G_\varepsilon
=
F_J
+
b^2\tan^2\varepsilon
\left[
    \frac{\tau}{4}\cosh(2\tau)
    -
    \frac18\sinh(2\tau)
\right].
\end{aligned}
\end{equation}
In particular,
\[
    \cot\varepsilon\,G_\varepsilon
    \longrightarrow
    F_J
    \qquad
    \text{in }
    C^\infty_{\mathrm{loc}}(\mathbb{R}^3_{p,q,\tau})
\]
with error $O(\varepsilon^2)$ on every compact set.

This also recovers the harmonic equation by linearizing the
special Lagrangian graph equation.  Indeed, on a simply connected open set
$U\Subset\mathbb{R}^3\setminus\Gamma_b$, choose one sheet and
let $\widetilde U$ be its lift to the parameter cover, so that
\[
    X_0|_{\widetilde U}:
    \widetilde U
    \longrightarrow
    U
\]
is a diffeomorphism.  We identify the nearby
$X_\varepsilon$-sheet with the $X_0$-sheet by their local inverse
maps and regard $G_\varepsilon$ as a single-valued function of
$\mathbf{x}$.  Since the graph of $dG_\varepsilon$ is special
Lagrangian, it satisfies
\begin{equation*}
    \operatorname{Im}
    \det
    \left(
        I+\sqrt{-1}
        \operatorname{Hess}_{\mathbf{x}}(G_\varepsilon\circ X_\varepsilon^{-1})
    \right)
    =
    0.
\end{equation*}
In dimension three this is
\[
    \Delta (G_\varepsilon\circ X_\varepsilon^{-1})
    -
    \det
    \left(
        \operatorname{Hess}(G_\varepsilon\circ X_\varepsilon^{-1})
    \right)
    =
    0.
\]
Since $X_\varepsilon-X_0=O(\varepsilon^2)
\ \text{in }C^\infty_{\mathrm{loc}}$, we can obtain $G_\varepsilon=O(\tan\varepsilon)$ locally smoothly away
from $\Gamma_b$ Thus, division by $\tan\varepsilon$ and passage to the
limit gives
\[
    \Delta_{\mathbf{x}}
    \left[
        F_J\circ
        \left(
            X_0|_{\widetilde U}
        \right)^{-1}
    \right]
    =
    0.
\]
The explicit calculation below identifies this harmonic limit
with $-bf_b$.

From
\eqref{eq:mu-to-adapted-Joyce-variables}, we have
\begin{equation}\label{eq:pq-mu-algebra}
\begin{aligned}
    p^2
    &=
    \frac{\mu_1^2(b-\mu_2^2)}{b},\
    q^2
    =
    \frac{\mu_2^2(b+\mu_1^2)}{b},\
     \tau
    =
    \operatorname{arsinh}
    \left(
        \frac{\mu_3}{\sqrt b}
    \right),\\
    \sinh\tau\cosh\tau
    &=
    \frac{\mu_3\sqrt{b+\mu_3^2}}{b},\
    \cosh(2\tau)
    =
    \frac{b+2\mu_3^2}{b},\
    \sinh(2\tau)
    =
    \frac{2\mu_3\sqrt{b+\mu_3^2}}{b}.
\end{aligned}
\end{equation}

We first rewrite the three integrals occurring in the definition
of $f_b$.  By the substitution
$v=\sqrt b\,\sinh\theta$, one obtains
\begin{equation}\label{eq:parabolic-three-integrals}
\begin{aligned}
I_1(\mu_3)
&:=
\int_0^{\mu_3}
\frac{dv}{\sqrt{b+v^2}}
=
\tau,\\
I_2(\mu_3)
&:=
\int_0^{\mu_3}
\frac{v^2\,dv}{\sqrt{b+v^2}}\\
&=
\frac12
\left(
    \mu_3\sqrt{b+\mu_3^2}-b\tau
\right),\\
I_3(\mu_3)
&:=
\int_0^{\mu_3}
\frac{dv}{
    (v^2+b/2)^2
    \sqrt{b+v^2}
}\\
&=
\frac{2}{b^2}\tanh(2\tau)
=
\frac{4\mu_3\sqrt{b+\mu_3^2}}{
    b^2(b+2\mu_3^2)
},\\
\frac12P_bI_3
    &=
    -
    \frac{
        (\mu_1^2+b/2)(b/2-\mu_2^2)\mu_3
    }{b^2}
    \sqrt{b+\mu_3^2}.
\end{aligned}
\end{equation}

Using $f_b
    =
    I_2
    +
    \left(
        x_1+\frac b4
    \right)I_1
    +
    \frac12P_bI_3$ and \eqref{eq:parabolic-three-integrals}, we therefore obtain
\begin{equation}\label{eq:fb-fully-expanded-for-Joyce}
\begin{aligned}
f_b=&
-\frac 12(\sinh\tau \cosh\tau-\tau)p^2
+
\frac 12(\sinh\tau \cosh\tau+\tau)q^2\\
&-
b
\left[
    \frac{\tau}{4}\cosh(2\tau)
    -
    \frac18\sinh(2\tau)\right]\\
    &=-\frac{F_J}{b}
\end{aligned}
\end{equation}

Finally, combining
\eqref{eq:scaled-Joyce-potential-expansion} and
\eqref{eq:fb-fully-expanded-for-Joyce}, we have
\begin{equation}\label{eq:exact-scaled-potential-error}
\begin{aligned}
-\frac{\cot\varepsilon}{b}
G_\varepsilon
=
f_b
-
b\tan^2\varepsilon
\left[
    \frac{\tau}{4}\cosh(2\tau)
    -
    \frac18\sinh(2\tau)
\right].
\end{aligned}
\end{equation}
This proves convergence on the covering space, with
$O(\varepsilon^2)$ error in every local $C^k$ norm.  Moreover,
\[
    X_\varepsilon-X_0
    =
    \left(
        -\frac b2\tan^2\varepsilon
        \sinh^2\tau,
        0,
        0
    \right).
\]
Away from the branching set, $DX_0$ is invertible on either
sheet.  The inverse function theorem therefore identifies the
$X_\varepsilon$-sheets with the $X_0$-sheets in
$C^\infty_{\mathrm{loc}}$, and the same convergence holds for
the rescaled single-valued potentials on every relatively compact
cover-trivializing open subset of
$\mathbb{R}^3\setminus\Gamma_b$.  This completes the proof.
\end{proof}

\begin{remark}
\label{rem:paraboloidal-chart-completion}
The variables $(\mu_1,\mu_2,\mu_3)$ form a modified
paraboloidal coordinate chart on the double cover, rather than one
global coordinate system.  In particular, the map
\[
    (\mu_1,\mu_2)
    \longmapsto
    \left(
        \mu_1
        \sqrt{
            \frac{b-\mu_2^2}{b}
        },
        \mu_2
        \sqrt{
            \frac{b+\mu_1^2}{b}
        }
    \right)
\]
does not include the coordinate-degeneracy loci corresponding to
$|\mu_2|=\sqrt b$.  More explicitly, the image of this chart in
the $(p,q)$-plane is
\[
    \mathbb{R}^2
    \setminus
    \left[
        \{0\}
        \times
        \left(
            (-\infty,-\sqrt b]
            \cup
            [\sqrt b,\infty)
        \right)
    \right].
\]
The omitted half-lines are regular coordinate-degeneracy loci and
are disjoint from the branching set.  The global parametrization
$X_0(p,q,\tau)$ supplies the natural completion of this chart.
The identity $F_J=-bf_b$, first established on the open
paraboloidal chart, therefore extends over the completed double
cover by smoothness.
\end{remark}

\begin{remark}
\label{rem:Joyce-parabolic-geometric-meaning}
Joyce describes the special Lagrangians in the relevant case as
being weakly asymptotic to two special Lagrangian $3$-planes
intersecting along a line, with the two planes joined along one
half-line and separated along the other
\cite[\S7, Case~(b)]{Joyce2001}. 

Recall
the  asymptotic-plane potentials of
$L_\varepsilon$ are
\[
    Q_\varepsilon^\sigma(x)
    :=
    \sigma\frac{\tan\varepsilon}{2}
    \left(
        x_2^2-x_3^2
    \right),
    \qquad
    \sigma=\pm1.
\]
Indeed, equations
\eqref{eq:adapted-Joyce-real-projection} and
\eqref{eq:adapted-Joyce-imaginary-projection} give
\[
    \frac{Y_{\varepsilon,2}}{X_{\varepsilon,2}}
    =
    \tan\varepsilon\,\tanh\tau,
    \qquad
    \frac{Y_{\varepsilon,3}}{X_{\varepsilon,3}}
    =
    -\tan\varepsilon\,\coth\tau.
\]
Thus $\tau\to+\infty$ and $\tau\to-\infty$ give
$Q_\varepsilon^+$ and $Q_\varepsilon^-$, respectively.  Equivalently,
\[
\begin{aligned}
\operatorname{graph}
\left(
    dQ_\varepsilon^\sigma
\right)
&=
\Big\{
    \big(
        x_1,\,
        (1+\sqrt{-1}\sigma\tan\varepsilon)x_2,\,
        (1-\sqrt{-1}\sigma\tan\varepsilon)x_3
    \big):
    \mathbf{x}\in\mathbb{R}^3
\Big\},
\end{aligned}
\]
and
\[
    1\pm\sqrt{-1}\sigma\tan\varepsilon
    =
    \frac{
        e^{\pm\sqrt{-1}\sigma\varepsilon}
    }{
        \cos\varepsilon
    }.
\]
These are precisely the two limiting planes in Joyce's
parametrization.  Moreover,
\[
    \det
    \left(
        I+\sqrt{-1}
        \operatorname{Hess}Q_\varepsilon^\sigma
    \right)
    =
    \left(
        1+\sqrt{-1}\sigma\tan\varepsilon
    \right)
    \left(
        1-\sqrt{-1}\sigma\tan\varepsilon
    \right)
    >
    0.
\]
Thus
\[
    \operatorname{graph}
    \left(
        dQ_\varepsilon^\sigma
    \right)
    \subset
    T^*\mathbb{R}^3
    \cong
    \mathbb{C}^3
\]
are two linear special Lagrangian planes of the same phase,
intersecting along the $x_1$-axis.  Moreover,
\[
    \left.
    \frac{\partial Q_\varepsilon^\sigma}
         {\partial\varepsilon}
    \right|_{\varepsilon=0}
    =
    \sigma\frac{x_2^2-x_3^2}{2}
    =
    -b\,\sigma q_b.
\]
This agrees exactly with the leading homogeneous scale-down of the
identity $F_J=-bf_b$.  Thus Joyce's nonlinear family is the
special Lagrangian geometry whose full infinitesimal two-valued
potential is, after the explicit normalization by $-b$, precisely
the parabolic function $f_b$.
\end{remark}